\newtheorem{thm}{Theorem}[section]
\newtheorem{prop}[thm]{Proposition}
\newtheorem{lem}[thm]{Lemma}
\newtheorem{cor}[thm]{Corollary}
\newtheorem{problem}[thm]{Problem}
\theoremstyle{definition}
\newtheorem{definition}[thm]{Definition}
\newtheorem{example}[thm]{Example}
\theoremstyle{remark}
\newtheorem{remark}[thm]{Remark}
\numberwithin{equation}{section}
\newcommand{\QQ}{\mathbb{Q}}
\newcommand{\RR}{\mathbb{R}}
\newcommand{\ZZ}{\mathbb{Z}}
\newcommand{\NN}{\mathbb{N}}
\DeclareMathOperator{\cl}{\mathrm{cl}}
\DeclareMathOperator{\scl}{\mathrm{scl}}
\newcommand{\bG}{N}
\newcommand{\hG}{G}
\newcommand{\CC}{\mathcal{C}}
\newcommand{\Surf}{\mathrm{Surf}}
\newcommand{\CSD}{\mathcal{C}_{\mathrm{Surf}_D}}
\newcommand{\Ker}{\mathrm{Ker}}
\newcommand{\Symp}{\mathrm{Symp}}
\newcommand{\Ham}{\mathrm{Ham}}
\newcommand{\Cal}{\mathrm{Cal}}
\newcommand{\HHH}{\mathrm{H}}
\newcommand{\GL}{\mathrm{GL}}
\newcommand{\Sp}{\mathrm{Sp}}
\newcommand{\Mod}{\mathrm{Mod}}
\newcommand{\ab}{\mathrm{ab}}
\newcommand{\Ab}{\mathrm{Ab}}
\newcommand{\OR}{\mathrm{OR}}
\newcommand{\supp}{\mathrm{supp}}
\newcommand{\rk}{\mathrm{rk}}
\newcommand{\intrk}{\mathrm{int}\textrm{-}\mathrm{rk}}
\newcommand{\sperk}{\mathrm{spe}\textrm{-}\mathrm{rk}}
\newcommand{\genrk}{\mathrm{gen}\textrm{-}\mathrm{rk}}
\newcommand{\bigast}{
  \mathop{
    \vphantom{\bigoplus}
    \mathchoice
      {\vcenter{\hbox{\resizebox{\widthof{$\displaystyle\bigoplus$}}{!}{$\ast$}}}}
      {\vcenter{\hbox{\resizebox{\widthof{$\bigoplus$}}{!}{$\ast$}}}}
      {\vcenter{\hbox{\resizebox{\widthof{$\scriptstyle\oplus$}}{!}{$\ast$}}}}
      {\vcenter{\hbox{\resizebox{\widthof{$\scriptscriptstyle\oplus$}}{!}{$\ast$}}}}
  }\displaylimits
}
\newsavebox{\@brx}
\newcommand{\llangle}[1][]{\savebox{\@brx}{\(\m@th{#1\langle}\)}%
  \mathopen{\copy\@brx\kern-0.5\wd\@brx\usebox{\@brx}}}
\newcommand{\rrangle}[1][]{\savebox{\@brx}{\(\m@th{#1\rangle}\)}%
  \mathclose{\copy\@brx\kern-0.5\wd\@brx\usebox{\@brx}}}
\begin{document}

\title{Mixed commutator lengths, wreath products and general ranks}

\author[M. Kawasaki]{Morimichi Kawasaki}
\address[Morimichi Kawasaki]{Department of Mathematical Sciences, Aoyama Gakuin University, 5-10-1 Fuchinobe, Chuo-ku, Sagamihara-shi, Kanagawa, 252-5258, Japan}
\email{kawasaki@math.aoyama.ac.jp}

\author[M. Kimura]{Mitsuaki Kimura}
\address[Mitsuaki Kimura]{Department of Mathematics, Kyoto University, Kitashirakawa Oiwake-cho, Sakyo-ku, Kyoto 606-8502, Japan}
\email{mkimura@math.kyoto-u.ac.jp}

\author[S. Maruyama]{Shuhei Maruyama}
\address[Shuhei Maruyama]{Graduate School of Mathematics, Nagoya University, Furocho, Chikusaku, Nagoya, 464-8602, Japan}
\email{m17037h@math.nagoya-u.ac.jp}

\author[T. Matsushita]{Takahiro Matsushita}
\address[Takahiro Matsushita]{Department of Mathematical Sciences, University of the Ryukyus, Nishihara-cho, Okinawa 903-0213, Japan}
\email{mtst@sci.u-ryukyu.ac.jp}

\author[M. Mimura]{Masato Mimura}
\address[Masato Mimura]{Mathematical Institute, Tohoku University, 6-3, Aramaki Aza-Aoba, Aoba-ku, Sendai 9808578, Japan}
\email{m.masato.mimura.m@tohoku.ac.jp}

\makeatletter
\@namedef{subjclassname@2020}{%
\textup{2020} Mathematics Subject Classification}
\makeatother

\keywords{commutator lengths, mixed commutator lengths, wreath products, general ranks, special ranks}
\subjclass[2020]{Primary 20E22; Secondary 20F12, 20F16}

\begin{abstract}
  In the present paper, for a pair $(G,N)$ of a group $G$ and its normal subgroup $N$, we consider the mixed commutator length $\mathrm{cl}_{G,N}$ on the mixed commutator subgroup $[G,N]$. We focus on the setting of wreath products: $ (G,N)=(\mathbb{Z}\wr \Gamma, \bigoplus_{\Gamma}\mathbb{Z})$. Then we determine mixed commutator lengths in terms of  the general rank in the sense of Malcev. As a byproduct, when an abelian group $\Gamma$ is not locally cyclic, the ordinary commutator length $\mathrm{cl}_G$ does not coincide with $\mathrm{cl}_{G,N}$ on $[G,N]$  for the above pair. On the other hand, we prove that if $\Gamma$ is locally cyclic, then for every pair  $(G,N)$ such that $1\to N\to G\to \Gamma \to 1$ is exact, $\mathrm{cl}_{G}$ and $\mathrm{cl}_{G,N}$ coincide on $[G,N]$. We also study the case of permutational wreath products when the group $\Gamma$ belongs to a certain class related to surface groups.
\end{abstract}

\maketitle





\section{Introduction}
\subsection{Background}
Let $G$ be a group and $N$ be a normal subgroup of $G$. An element of the form $[g,x] = gxg^{-1}x^{-1}$ with $g\in G$ and $x\in N$ is called a \emph{$(G,N)$-commutator} or a  \emph{mixed commutator}; $[G,N]$ is the subgroup generated by mixed commutators, and it is called the \emph{$(G,N)$-commutator subgroup} or the \emph{mixed commutator subgroup}. The \emph{$(G,N)$-commutator length} or the \emph{mixed commutator length} $\cl_{G,N}$ is the word length on $[G,N]$ with respect to the set of mixed commutators. Namely, for $x\in [G,N]$, $\cl_{G,N}(x)$ is the smallest integer $n$ such that there exist $n$ mixed commutators whose product is $x$. In the case of $N=G$, the notions of $[G,N]$ and $\cl_{G,N}$ coincide with those of the \emph{commutator subgroup} $[G,G]$ and the \emph{commutator length} $\cl_G$, respectively. Commutator lengths have been extensively studied in geometric topology (for example, see \cite{EK}, \cite{BIP}, \cite{Tsuboi12} and \cite{Tsuboi13}).

It is a classical fact that $\cl_G(y)$ has the following geometric interpretation. We can regard an element $y$ in $[G,G]$ as a homotopy class of a loop of the classifying space $BG$ of $G$. Since $y \in [G,G]$, the loop can be extended as a continuous map from a compact connected surface $\Sigma$ with $\partial \Sigma = S^1$ to $BG$; the commutator length coincides with the minimum genus of such $\Sigma$.
This description can be generalized to the setting of the mixed commutator length $\cl_{G,N}$: in \cite[Theorem~1.3]{KKMM1}, the authors obtained geometric and combinatorial interpretation of $\cl_{G,N}$, which is explained in terms of the concept of $(\hG,\bG)$-simplicial surfaces.

 In general, it is difficult to determine the precise values of $\cl_G$ and $\cl_{G,N}$. In the present paper, we investigate the comparison between mixed commutator lengths $\cl_{G,N}$ and usual commutator lengths $\cl_G$. Especially in the case of certain wreath products, we determine the precise value of $\cl_{G,N}$ in terms of general ranks due to Malcev \cite{Mal}.

In the present paper, we use the notation $\Gamma$ and $q$ to fit the following short exact sequence:
\begin{align*}
1\longrightarrow N\longrightarrow G\stackrel{q}{\longrightarrow} \Gamma \longrightarrow 1. \tag{$\star$}
\end{align*}

\subsection{Mixed commutator length on wreath products}

Our main result determines the mixed commutator length for certain elements in  wreath products. For a group $H$, $e_H$ denotes the group unit of $H$. For two groups $H$ and $\Gamma$, the (restricted) \emph{wreath product} $H\wr \Gamma$ is defined as  the semidirect product $\left(\bigoplus_{\Gamma}H\right)\rtimes \Gamma$, where $\Gamma$ acts on $\bigoplus_{\Gamma}H$ by shifting the coordinates.  By letting $G = H \wr \Gamma$, $G$ admits a natural projection $q \colon G \twoheadrightarrow \Gamma$ fitting in short exact sequence ($\star$). Note that in this case short exact sequence $(\star)$ splits.

We regard an element of $\bigoplus_{\Gamma} H$ as a function $u$ from $\Gamma$ to $H$ such that $u(\gamma) = e_H$ except for finitely many $\gamma \in \Gamma$. 
 For $\gamma \in \Gamma$, we write $u(\gamma) \in H$ to be the $\gamma$-entry of $u$.
 In the case where $H=\ZZ$, for $\lambda\in \Gamma$, $\delta_\lambda \colon \Gamma \to \ZZ$ denotes   the Kronecker delta function at $\lambda$, meaning that $\delta_\lambda(\gamma)=1$ if $\gamma = \lambda$ and $\delta_\lambda(\gamma)=0$ otherwise.
 For a group $\Gamma$ and a subset $S$, $\langle S\rangle$ denotes the subgroup of $\Gamma$ generated by $S$. In the present paper, set $\NN=\{1,2,3,\ldots\}$.

 Now we state our first result:

\begin{thm}\label{thm:wreath_new}
Let $\Gamma$ be a group. Set $G=\ZZ\wr \Gamma$ and $N=\bigoplus_{\Gamma}\ZZ$. Let $k\in \NN$ and $\lambda_1,\ldots ,\lambda_k\in \Gamma\setminus \{e_{\Gamma}\}$. Let $\Lambda=\langle \lambda_1,\ldots ,\lambda_k\rangle$. Set
\[
 x_{(\lambda_1,\ldots ,\lambda_k)}=\sum_{i=1}^k\delta_{\lambda_i}-k\delta_{e_{\Gamma}}.
\]
Then we have
\[
\cl_{G,N}( x_{(\lambda_1,\ldots ,\lambda_k)})=\intrk^{\Gamma}(\Lambda),
\]
where $\intrk^{\Gamma}(\Lambda)$ is the \emph{intermediate rank} of the group pair $(\Gamma,\Lambda)$, defined in Definition~$\ref{def:int_rk}$.
\end{thm}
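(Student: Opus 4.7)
The plan is to translate the mixed commutator length on $[G,N]$ into a question about right ideals of $\ZZ[\Gamma]$ generated by elements of the form $\mu - 1$, and then detect the minimal number of such generators via a quotient-by-relative-augmentation-ideal trick.

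First I would identify $N = \bigoplus_\Gamma \ZZ$ with $\ZZ[\Gamma]$ via $\delta_\gamma \leftrightarrow \gamma$; under this identification $[G,N]$ becomes the augmentation ideal. A direct computation in the semidirect product $G = \ZZ \wr \Gamma$ gives $[g,x] = (q(g)-1)\cdot x$ for $g \in G$ and $x \in N$. Since $N$ is abelian, a product of $n$ mixed commutators is exactly a sum $\sum_{i=1}^n (\mu_i - 1)x_i$ with $\mu_i \in \Gamma$ and $x_i \in \ZZ[\Gamma]$, and conversely every such sum is realized by an $n$-fold product of mixed commutators. In particular $\cl_{G,N}(y) \le n$ if and only if $y \in \sum_{j=1}^n (\mu_j - 1)\ZZ[\Gamma]$ for some $\mu_1,\ldots,\mu_n \in \Gamma$, and our target element becomes $x_{(\lambda_1,\ldots,\lambda_k)} = \sum_{i=1}^k (\lambda_i - 1)$.

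Next I would prove the following ring-theoretic lemma: for a subgroup $H \le \Gamma$ generated by $\mu_1,\ldots,\mu_n$, the sum $\sum_{j=1}^n (\mu_j - 1)\ZZ[\Gamma]$ coincides with the relative augmentation ideal $(H-1)\ZZ[\Gamma] := \sum_{h \in H}(h-1)\ZZ[\Gamma]$. The nontrivial inclusion is that $\gamma - 1 \in \sum_j (\mu_j - 1)\ZZ[\Gamma]$ for every $\gamma \in H$, which follows by induction on the word length of $\gamma$ in $\mu_1^{\pm 1},\ldots,\mu_n^{\pm 1}$ using the identities $\mu\nu - 1 = (\mu - 1)\nu + (\nu - 1)$ and $\mu^{-1} - 1 = (\mu - 1)(-\mu^{-1})$. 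As a consequence, if $\Lambda$ is contained in some $n$-generated subgroup $H \le \Gamma$, then $x_{(\lambda_1,\ldots,\lambda_k)} \in (H-1)\ZZ[\Gamma]$ and hence $\cl_{G,N}(x_{(\lambda_1,\ldots,\lambda_k)}) \le n$, giving the upper bound.

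For the matching lower bound I would use the standard isomorphism $\ZZ[\Gamma]/(H-1)\ZZ[\Gamma] \cong \ZZ[H \backslash \Gamma]$ of right $\ZZ[\Gamma]$-modules sending $\gamma$ to its coset $H\gamma$. Under this projection, $x_{(\lambda_1,\ldots,\lambda_k)}$ maps to $\sum_{i=1}^k [H\lambda_i] - k[H]$, and since the cosets form a free $\ZZ$-basis, this vanishes only when every $H\lambda_i = H$, i.e.\ $\Lambda \subseteq H$. Therefore $\cl_{G,N}(x_{(\lambda_1,\ldots,\lambda_k)}) \le n$ forces the existence of an $n$-generated subgroup of $\Gamma$ containing $\Lambda$, which is precisely the defining condition $\intrk^\Gamma(\Lambda) \le n$ as recorded in Definition~\ref{def:int_rk}. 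Combining the two bounds then gives the theorem. The main obstacle I foresee is purely bookkeeping: ensuring that the semidirect-product identification of mixed commutators with right multiples of $\mu - 1$ is clean enough that the integer $n$ of commutators matches the number of subgroup generators on the nose, and confirming that the intrinsic definition of $\intrk^\Gamma(\Lambda)$ really is the $n$-generator containment condition (for our finitely generated $\Lambda$ this is the only natural reading, but in general one may need to pass to a supremum over finitely generated sublattices).
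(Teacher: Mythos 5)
Your proof is correct and follows the same underlying computational skeleton as the paper's, repackaged in a cleaner group-ring dialect rather than being a genuinely different route. The identification $N \cong \ZZ[\Gamma]$ and the formula $[g,x] = (q(g)-1)\cdot x$ amount to the paper's Lemma~\ref{lem:commutator}; your induction-on-word-length lemma identifying $\sum_j(\mu_j-1)\ZZ[\Gamma]$ with the relative augmentation ideal $(H-1)\ZZ[\Gamma]$ is exactly the argument of Proposition~\ref{prop:fromabove}, and the two identities $\mu\nu-1 = (\mu-1)\nu + (\nu-1)$ and $\mu^{-1}-1 = (\mu-1)(-\mu^{-1})$ that you cite are the same ones used there in Kronecker-delta guise. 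For the lower bound, where the paper's Lemma~\ref{lem:claim} verifies directly that $\sum_{\theta\in\Theta}x(\theta) = 0$ by a coset-permutation argument, you instead invoke the standard isomorphism $\ZZ[\Gamma]/(H-1)\ZZ[\Gamma] \cong \ZZ[H\backslash\Gamma]$, which is a tidy way to package the same vanishing (and in both versions only the coefficient of the trivial coset $[H]$ ends up being used, since that coefficient alone already forces all $\lambda_i \in H$). What the relative-augmentation-ideal framing buys you is transparency: it makes clear that $\cl_{G,N}$ of an augmentation-ideal element is precisely the minimal number of group-element generators of a right ideal $(H-1)\ZZ[\Gamma]$ containing it, from which the more general Theorem~\ref{thm:wreath_shin} also drops out with little extra work. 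Your closing caveat about the definition of $\intrk^\Gamma(\Lambda)$ is unnecessary here: since $\Lambda$ is finitely generated, the infimum in Definition~\ref{def:int_rk} is over a nonempty set of nonnegative integers bounded by $k$ and is therefore attained, which is exactly what your bookkeeping needs.
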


The formula in Theorem~\ref{thm:wreath_new} is stated in terms of a variant of ranks of groups, which we call the \emph{intermediate rank}, as follows.  This notion relates to the concept of \emph{general rank} in the sense of Malcev \cite{Mal}. Here, $\ZZ_{\geq 0}:=\{n\in \ZZ\;|\; n\geq 0\}$.

\begin{definition}[intermediate rank]\label{def:int_rk}
\begin{enumerate}[(1)]
  \item For a group $\Lambda$, the \emph{rank} of $\Lambda$ is defined by
\[
\rk(\Lambda)=\inf \{ \#S \; | \; \Lambda = \langle S \rangle \} \ \in \ZZ_{\geq 0}\cup \{\infty\}.
\]
 \item For a pair  $(\Gamma,\Lambda)$ of a group $\Gamma$ and its subgroup $\Lambda$, the \emph{intermediate rank} of $(\Gamma,\Lambda)$ is defined by
\[
\intrk^{\Gamma}(\Lambda)=\inf \{ \rk(\Theta) \; | \; \Lambda \leqslant \Theta\leqslant \Gamma \}\  \in \ZZ_{\geq 0}\cup \{\infty\}.
\]
 \item (general rank, \cite{Mal}) For a group $\Gamma$, the \emph{general rank} of $\Gamma$ is defined by
\[
\genrk(\Gamma)=\sup\{\intrk^{\Gamma}(\Lambda)\;|\; \textrm{$\Lambda$ is a finitely generated subgroup of $\Gamma$}\}.
\]
\end{enumerate}
\end{definition}

 Furthermore, we have our main result, Theorem~\ref{thm:wreath_shin}, which generalizes Theorem~\ref{thm:wreath_new}. To state the theorem, we employ the following terminology.
\begin{definition}\label{def:support}
Let $T$ be a non-empty set and $A$ be an additive group. Let $u\in \bigoplus_{T}A$. We regard $u$ as a map $u\colon T\to A$ such that $u(t)=0$ for all but finitely many $t\in T$.
\begin{enumerate}[$(1)$]
  \item The \emph{support} $\supp(u)$ is defined as the set
\[
\supp(u)=\{t\in T \;|\; u(t)\ne 0\}.
\]
  \item For a subset $S$ of $T$, we say that $S$ is a \emph{zero-sum set for $u$} if
\[
\sum_{s\in S}u(s)=0.
\]
\end{enumerate}
\end{definition}

\begin{thm}[mixed commutator length on wreath products]\label{thm:wreath_shin}
Let $\Gamma$ be a group. Set $G=\ZZ\wr \Gamma$ and $N=\bigoplus_{\Gamma}\ZZ$.
\begin{enumerate}[$(1)$]
\item Let $\Lambda$ be a subgroup of $\Gamma$. Assume that $x\in N$ fulfills the following three conditions.
\begin{enumerate}[$(i)$]
  \item $e_{\Gamma}\in \supp(x)$.
  \item $\Lambda$ is a zero-sum set for $x$, namely, $\sum_{\lambda\in \Lambda}x(\lambda)=0$.
  \item For every zero-sum set $S$ for $x$ satisfying $S\subseteq \supp(x)$ and $e_{\Gamma}\in S$, we have $\langle S\rangle =\Lambda$.
\end{enumerate}
Then we have
\[
\cl_{G,N}(x)=\intrk^{\Gamma}(\Lambda).
\]
\item We have
\[
\{\cl_{G,N}(x)\;|\; x\in [G,N]\}=\{r\in \ZZ_{\geq 0}\;|\;r\leq \genrk(\Gamma)\}.\]
Furthermore, the following holds  true:  for every $r\in \NN$ with $r\leq \genrk(\Gamma)$, there exists $x_r\in [G,N]$ such that $\cl_{G,N}(x_r)=r$ and that $x_r$ fulfills conditions $(i)$--$(iii)$ of $(1)$ with $\Lambda=\langle \supp(x)\rangle$.
\end{enumerate}
\end{thm}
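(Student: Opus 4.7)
The plan is to reduce the question to a calculation in the group ring $\ZZ[\Gamma]$, identifying $N=\bigoplus_\Gamma\ZZ$ with $\ZZ[\Gamma]$ via $\delta_\gamma \leftrightarrow \gamma$ so that $\Gamma$ acts by left multiplication. A direct computation in the wreath product yields $[(0,\gamma),(v,e_\Gamma)] = (\gamma\cdot v - v,\,e_\Gamma)$, whence the mixed commutators contained in $N$ are exactly the elements $(\gamma-1)v$ with $\gamma\in\Gamma$ and $v\in\ZZ[\Gamma]$. Thus $\cl_{G,N}(x)$ equals the smallest $r$ for which $x=\sum_{i=1}^r(\gamma_i-1)v_i$. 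Since $\cl_{G,N}$ is defined only on $[G,N]$, I take $x\in[G,N]$ as implicit in part $(1)$; then $\supp(x)$ is itself a zero-sum set containing $e_\Gamma$, and condition $(iii)$ forces $\langle\supp(x)\rangle=\Lambda$, in particular $\supp(x)\subseteq\Lambda$.

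For the upper bound $\cl_{G,N}(x)\leq n:=\intrk^{\Gamma}(\Lambda)$, I will pick $\Theta$ attaining the infimum, so $\Lambda\leqslant\Theta\leqslant\Gamma$ with $\rk(\Theta)=n$, together with a generating set $\gamma_1,\ldots,\gamma_n$ of $\Theta$. Since $\supp(x)\subseteq\Lambda\subseteq\Theta$ and $x$ has vanishing augmentation, $x$ lies in the augmentation ideal $I(\Theta)\subset\ZZ[\Theta]$. The standard identities $\mu\gamma-1=(\mu-1)\gamma+(\gamma-1)$ and $\gamma^{-1}-1=(\gamma-1)(-\gamma^{-1})$ yield inductively that $I(\Theta)=\sum_{i=1}^n(\gamma_i-1)\ZZ[\Theta]$, so there exist $w_i\in\ZZ[\Theta]$ with $x=\sum_{i=1}^n(\gamma_i-1)w_i$, exhibiting $x$ as a product of $n$ mixed commutators.

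The lower bound is the crux. Suppose $x=\sum_{i=1}^r(\gamma_i-1)v_i$ and set $\Theta':=\langle\gamma_1,\ldots,\gamma_r\rangle$, so $\rk(\Theta')\leq r$. The key observation is that each $\gamma_i\in\Theta'$ preserves the partition of $\Gamma$ into left cosets of $\Theta'$, so each summand $(\gamma_i-1)v_i$ has augmentation zero on every such coset, and therefore so does $x$. Applied to the coset $\Theta'$ containing $e_\Gamma$, this says that $S:=\Theta'\cap\supp(x)$ is a zero-sum subset of $\supp(x)$ containing $e_\Gamma$; condition $(iii)$ then gives $\Lambda=\langle S\rangle\leqslant\Theta'$, whence $\intrk^{\Gamma}(\Lambda)\leq\rk(\Theta')\leq r$.

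For part $(2)$, the inclusion $\subseteq$ follows from the above upper bound applied with $\Lambda=\langle\supp(x)\rangle$, which is finitely generated since $\supp(x)$ is finite, giving $\cl_{G,N}(x)\leq\intrk^{\Gamma}(\langle\supp(x)\rangle)\leq\genrk(\Gamma)$. For the reverse inclusion and the existence of $x_r$: the case $r=0$ is handled by $x_0=0$. For $r\in\NN$ with $r\leq\genrk(\Gamma)$, fix a finitely generated $\Lambda_*\leqslant\Gamma$ with $\intrk^{\Gamma}(\Lambda_*)\geq r$ and nontrivial generators $\lambda_1,\ldots,\lambda_k$. Using that $\intrk^{\Gamma}$ is monotone in the subgroup argument and satisfies $\intrk^{\Gamma}(\langle\lambda_1,\ldots,\lambda_{i+1}\rangle)\leq\intrk^{\Gamma}(\langle\lambda_1,\ldots,\lambda_i\rangle)+1$ (obtained by adjoining $\lambda_{i+1}$ to an optimal enclosing subgroup), the sequence $i\mapsto\intrk^{\Gamma}(\langle\lambda_1,\ldots,\lambda_i\rangle)$ grows from $0$ to a value at least $r$ in unit steps, so attains $r$ at some $i^*$. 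Setting $x_r:=\sum_{j=1}^{i^*}\delta_{\lambda_j}-i^*\delta_{e_\Gamma}$, the combinatorial check that the only zero-sum subset of $\supp(x_r)$ containing $e_\Gamma$ is $\supp(x_r)$ itself yields conditions $(i)$--$(iii)$ with $\Lambda=\langle\supp(x_r)\rangle$, and part $(1)$ gives $\cl_{G,N}(x_r)=r$. The main obstacle throughout is the lower bound in $(1)$: condition $(iii)$ is tailored to mesh precisely with the coset-augmentation observation, forcing any short expression of $x$ to spawn an ambient subgroup of too small a rank containing $\Lambda$.
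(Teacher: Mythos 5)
Your argument is essentially the paper's proof translated into group-ring language, and it is correct. The identification $N\cong\ZZ[\Gamma]$, the recognition that the mixed commutators landing in $N$ are exactly the elements $(\gamma-1)w$, the augmentation-ideal identity for the upper bound, and the ``zero-sum over $\Theta'=\langle\gamma_1,\ldots,\gamma_r\rangle$'' observation for the lower bound are precisely Lemmas~\ref{lem:commutator}, \ref{lem:claim} and Propositions~\ref{prop:frombelow}, \ref{prop:fromabove}.

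One small slip worth flagging in the lower bound: you assert that left multiplication by $\gamma_i\in\Theta'$ fixes each \emph{left} coset of $\Theta'$ and hence that every summand $(\gamma_i-1)v_i$ has zero augmentation on every left coset. That is false in general: $\gamma_i$ permutes left cosets $g\Theta'$ but sends $g\Theta'$ to $\gamma_ig\Theta'$, which differs from $g\Theta'$ unless $g$ normalizes $\Theta'$. The statement is correct for \emph{right} cosets $\Theta'g$ (equivalently, one can use that left and right translations commute, as the paper does). Since the only coset you actually invoke is $\Theta'$ itself, which is a left and a right coset of itself, the argument you run is valid; only the blanket claim needs the word ``left'' changed to ``right.'' Similarly, in part $(2)$ you should state explicitly that $\lambda_1,\ldots,\lambda_k$ are chosen to be \emph{distinct} nontrivial elements so that the zero-sum check on $\supp(x_r)$ works as stated.

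For part $(2)$ your route differs mildly from the paper's Lemma~\ref{lem:gen_rk}. The paper picks a minimizer $\Theta$ for $\intrk^{\Gamma}(\Lambda)$, truncates its generating set to the first $r$ generators, and argues by contradiction that the intermediate rank of the truncated subgroup is exactly $r$. You instead observe that $i\mapsto\intrk^{\Gamma}(\langle\lambda_1,\ldots,\lambda_i\rangle)$ is nondecreasing, starts at $0$, increases by at most $1$ per step (both facts you justify), and eventually exceeds $r$, so it attains $r$ by the discrete intermediate value property. Both arguments are elementary; yours makes the monotonicity and Lipschitz structure of the intermediate-rank function explicit, which is a pleasant reorganization, while the paper's version is marginally shorter. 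Neither buys additional generality.
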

We note that $\Lambda$ in Theorem~\ref{thm:wreath_shin}~(1) must be finitely generated because $\supp(x)$ is a finite set.

On $\cl_G$, in \cite[Theorem~7.1]{KKMM1} we showed that if $(\star)$ splits, then
\begin{eqnarray}\label{eq:3bai}
\cl_{G,N}(x)\leq 3\cl_{G}(x)
\end{eqnarray}
holds for every $x\in [G,N]$. In this paper, we improve this bound for the case where $N$ is \emph{abelian} in the following manner.

\begin{thm}\label{thm:split}
Let $(G,N,\Gamma)$ be a triple of groups that fits in the short exact sequence $(\star)$. Assume that $(\star)$ splits and that $N$ is \emph{abelian}. Then, we have
\begin{eqnarray}\label{eq:2bai}
\cl_{G,N}(x)\leq 2\cl_{G}(x)
\end{eqnarray}
for every $x\in [G,N]$.
\end{thm}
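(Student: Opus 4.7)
The plan is to fix a splitting $s\colon \Gamma \to G$ of $q$, work in the resulting semidirect product $G = N \rtimes \Gamma$ with $N$ written additively, and decompose each commutator in a length-$\cl_G(x)$ expression for $x$ into exactly two mixed commutators plus a ``pure $\Gamma$-factor''. Abelianness of $N$ will let these $\Gamma$-factors be pushed aside cleanly.

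Let $\gamma \cdot n$ denote the conjugation action of $\Gamma$ on $N$. Given an expression $x = \prod_{i=1}^n [g_i, h_i]$ with $n = \cl_G(x)$, I would write $g_i = a_i s(\gamma_i)$ and $h_i = b_i s(\delta_i)$ and set $\tau_i = [\gamma_i, \delta_i]$. A direct computation in $N \rtimes \Gamma$ yields
\[
[g_i, h_i] = E_i \cdot s(\tau_i), \qquad E_i = \bigl(a_i - (\gamma_i \delta_i \gamma_i^{-1}) \cdot a_i\bigr) + \bigl(\gamma_i \cdot b_i - \tau_i \cdot b_i\bigr) \ \in \ N,
\]
and $x \in N$ forces $\tau_1 \tau_2 \cdots \tau_n = e_\Gamma$.

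The crux of the argument is that each of the two summands of $E_i$ is a single mixed commutator in $N$. Indeed, mixed commutators in $N$ take the additive form $[s(\alpha), m] = \alpha \cdot m - m$, so the identities
\[
a_i - (\gamma_i \delta_i \gamma_i^{-1}) \cdot a_i = \bigl[s\bigl((\gamma_i \delta_i \gamma_i^{-1})^{-1}\bigr),\, (\gamma_i \delta_i \gamma_i^{-1}) \cdot a_i\bigr], \quad \gamma_i \cdot b_i - \tau_i \cdot b_i = \bigl[s(\gamma_i \tau_i^{-1}),\, \tau_i \cdot b_i\bigr]
\]
do the job. The abelianness of $N$ enters essentially here: it is precisely what allows the $N$-part of $[g_i, h_i]$ to be cleanly split into two independent mixed-commutator pieces, with no cross terms between $a_i$ and $b_i$. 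Identifying this decomposition is the main obstacle; once it is in place, the rest is a careful reorganization, and this is where the improvement of the factor $3$ in \cite[Theorem~7.1]{KKMM1} to $2$ is concentrated.

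To conclude, I would push every $s(\tau_i)$ to the right of the product $E_1 s(\tau_1) E_2 s(\tau_2) \cdots E_n s(\tau_n)$ by repeatedly using $s(\tau) \cdot m = (\tau \cdot m) \cdot s(\tau)$ for $m \in N$, arriving at
\[
x = E_1 \cdot (\tau_1 \cdot E_2) \cdot (\tau_1 \tau_2 \cdot E_3) \cdots (\tau_1 \cdots \tau_{n-1} \cdot E_n) \cdot s(\tau_1 \tau_2 \cdots \tau_n).
\]
The trailing factor equals $s(e_\Gamma) = e_G$ since $s$ is a homomorphism. Because $\Gamma$-conjugation sends mixed commutators to mixed commutators, each $(\tau_1 \cdots \tau_{i-1}) \cdot E_i$ remains a sum of two mixed commutators, and $x$ is thus expressed as a sum of $2n$ mixed commutators in $N$. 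This gives $\cl_{G,N}(x) \leq 2n = 2\cl_G(x)$, as required.
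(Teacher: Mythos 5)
Your proof is correct, and the core mechanism matches the paper's: each ordinary commutator $[g_i,h_i]$ is decomposed into exactly two mixed commutators times a pure $\Gamma$-factor, abelianness of $N$ kills the cross-term between the two pieces, and the splitting guarantees the $\Gamma$-factors telescope to $e_G$. The paper packages this as a standalone inductive lemma (Lemma~\ref{lem:split}, proved for any abelian $N$ without mentioning a splitting) and then applies it with $a_i=(s\circ q)(f_i)$, $b_i=(s\circ q)(g_i)$ so the reference product vanishes; your version carries out the same decomposition directly in semidirect-product coordinates, which is more explicit but equivalent, the paper's intermediate lemma being marginally more reusable since it applies without a section.
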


Results in this paper show that \eqref{eq:2bai} is \emph{sharp}; see Theorem~\ref{thm:abelian}, Theorem~\ref{thm:surface_new} and Proposition~\ref{prop:cw}. We also note that  Theorem~\ref{thm:split} in particular applies to the triple $(G,N,\Gamma)=(\ZZ\wr \Gamma, \bigoplus_{\Gamma}\ZZ, \Gamma)$ for every group $\Gamma$.

\subsection{Coincidence problem of $\cl_G$ and $\cl_{G,N}$}
In the case that $\Gamma$ is abelian, it is easy to compute $\intrk^{\Gamma}(\Lambda)$ and $\genrk(\Gamma)$:
for an abelian group $\Gamma$, the intermediate rank coincides with $\rk(\Lambda)$, and the general rank $\genrk(\Gamma)$ coincides with the \emph{special rank of $\Gamma$}, which is defined as
the supremum of ranks of all finitely generated subgroups; see Definition~\ref{def:local_rank} and Lemma~\ref{lem:abel}.  This allows us to construct examples of pairs $(G,N)$ such that $\cl_G$ and $\cl_{G,N}$ are different.

Before discussing the coincidence problem of $\cl_G$ and $\cl_{G,N}$, we recall from our previous paper \cite{K2M3} that the stabilizations of $\cl_G$ and $\cl_{G,N}$ coincide in several cases. For $y \in [G,G]$ and $x \in [G,N]$, set
\[ \scl_G(y) = \lim_{n \to \infty} \frac{\cl_G(y^n)}{n} \quad \textrm{and} \quad \scl_{G,N}(x) = \lim_{n \to \infty} \frac{\cl_{G,N}(x^n)}{n}.\]
We call $\scl_G(y)$ the \emph{stable commutator length of $y$}, and $\scl_{G,N}(x)$ the \emph{stable mixed commutator length of $x$}. For a comprehensive introduction to the stable commutator lengths, we refer to Calegari's book \cite{Ca}.
By a celebrated result by Bavard (see \cite{Bav} or \cite{Ca}), called the Bavard duality theorem, it is well known that the stable commutator lengths are closely related to the notion of quasimorphisms of groups. In \cite{KKMM1} the authors established the Bavard duality theorem for $\scl_{G,N}$, which implies that $\scl_{G,N}$ are closely related to $G$-invariant quasimorphisms on $N$. Using this, in our previous work, we show the following coincidence result of $\scl_G$ and $\scl_{G,N}$ (see \cite[Proposition~1.6]{KKMM1} and \cite[Theorems~1.9, 1.10 and 2.1]{K2M3}).

\begin{thm}[\cite{KKMM1}, \cite{K2M3}]\label{thm:scl}
In $(\star)$, if $\Gamma$ is solvable and if either of the following conditions
\begin{enumerate}[$(1)$]
  \item short exact sequence $(\star)$ \emph{virtually splits}, meaning that, there exists $(\Lambda,s_{\Lambda})$ such that $\Lambda$ is a subgroup of finite index of $\Gamma$ and $s_{\Lambda}\colon \Lambda\to G$  a homomorphism satisfying $q\circ s_{\Lambda}=\mathrm{id}_{\Lambda}$;
  \item $\HHH^2(G;\RR)=0$;
  \item or, $\HHH^2(\Gamma;\RR)=0$
\end{enumerate}
is satisfied, then $\scl_{G}$ coincides with $\scl_{G,N}$ on $[G,N]$.
\end{thm}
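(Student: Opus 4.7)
The plan is to reformulate the sought equality via the two Bavard dualities and reduce it to an extension problem for quasimorphisms. On one hand, the classical Bavard duality gives
\[
2\scl_G(x)=\sup_{\phi}\frac{|\phi(x)|}{D(\phi)},
\]
where $\phi$ ranges over homogeneous quasimorphisms on $G$; on the other, the mixed Bavard duality established in \cite{KKMM1} yields
\[
2\scl_{G,N}(x)=\sup_{\psi}\frac{|\psi(x)|}{D(\psi)},
\]
where $\psi$ ranges over $G$-invariant homogeneous quasimorphisms on $N$. The inequality $\scl_G\leq \scl_{G,N}$ on $[G,N]$ is immediate, because every mixed commutator is a commutator. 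Hence it suffices to establish, under each hypothesis, that every $G$-invariant homogeneous quasimorphism $\psi\colon N\to \RR$ extends to a homogeneous quasimorphism $\tilde\psi\colon G\to \RR$ with $D(\tilde\psi)\leq D(\psi)$; combined with the duality formulas, this gives $\scl_{G,N}\leq \scl_G$.

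For case (1), let $s_\Lambda\colon \Lambda\to G$ be the partial section onto a finite-index subgroup $\Lambda\leq \Gamma$, and set $H=N\cdot s_\Lambda(\Lambda)$, a subgroup of finite index in $G$. First I would define $\tilde\psi$ on $H$ by $\tilde\psi(n\cdot s_\Lambda(\lambda))=\psi(n)$ and verify that this formula is a homogeneous quasimorphism on $H$ with $D(\tilde\psi|_H)\leq D(\psi)$; this is where $G$-invariance of $\psi$ is used, since $s_\Lambda$ need not be a section on all of $\Gamma$. Next, I would extend $\tilde\psi|_H$ to $G$ by a standard finite-index averaging procedure over coset representatives of $G/H$ (which is possible because $[G:H]<\infty$); homogenization afterwards then yields an extension with the required defect bound. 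For cases (2) and (3), the approach is cohomological. There is an exact sequence relating $G$-invariant homogeneous quasimorphisms on $N$ to bounded cohomology, namely a connecting map sending $\psi$ to an obstruction class in $\HHH^2_b(\Gamma;\RR)$ whose vanishing is equivalent to the existence of a quasimorphism extension to $G$. Since $\Gamma$ is solvable, hence amenable, one has $\HHH^2_b(\Gamma;\RR)=\EH^2_b(\Gamma;\RR)$, so this obstruction lies in $\ker(\HHH^2_b(\Gamma;\RR)\to \HHH^2(\Gamma;\RR))$. Under (3), this kernel is the entire group and already vanishes; under (2), one uses the five-term comparison diagram between bounded and ordinary cohomology for $(\star)$, together with $\HHH^2(G;\RR)=0$, to conclude the obstruction also vanishes. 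In either situation the amenable averaging used to construct the primitive gives an extension whose defect is controlled by $D(\psi)$.

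The principal obstacle, in all three cases, is not the mere existence of a set-theoretic or bounded-cohomological extension but the quantitative control of the defect: the duality argument collapses unless one produces $\tilde\psi$ with $D(\tilde\psi)\leq D(\psi)$. In case (1) this is what forces the argument to go through the finite-index subgroup $H$ rather than extending naively off a set-theoretic section of the quotient map; in cases (2) and (3) it is what forces the use of amenability of $\Gamma$ to select a bounded primitive with defect not exceeding that of $\psi$. Assembling these ingredients yields the coincidence $\scl_G=\scl_{G,N}$ on $[G,N]$ in each of the stated situations.
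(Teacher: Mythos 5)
The paper does not prove Theorem~\ref{thm:scl}: it is quoted from \cite{KKMM1} and \cite{K2M3}, so there is no in-paper argument to compare against, and your sketch must be assessed on its own. Your general framework---invoke the two Bavard dualities and reduce $\scl_{G,N}\le\scl_G$ to extending each $G$-invariant homogeneous quasimorphism $\psi$ on $N$ to a homogeneous quasimorphism on $G$ with no loss of defect---is indeed the right mechanism, and you are correct that the defect control is the crux. However, your treatment of cases (2) and (3) contains a genuine error: you place the obstruction to extension in $\HHH^2_b(\Gamma;\RR)$, but since $\Gamma$ is solvable and hence amenable, that group is \emph{already zero}, independently of (2) or (3). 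If that were the right home for the obstruction, the conclusion would follow for every solvable $\Gamma$ with no further hypothesis, which is false: the present paper itself cites \cite{MMM} for the fact that $\scl_G\ne\scl_{G,N}$ for the pair $(G,N)=(\pi_1(\Sigma_g),[\pi_1(\Sigma_g),\pi_1(\Sigma_g)])$ with $g\ge 2$, where $\Gamma=\ZZ^{2g}$ is abelian so that $\HHH^2_b(\Gamma;\RR)=0$ while $\HHH^2(\Gamma;\RR)\ne 0$, $\HHH^2(G;\RR)\ne 0$, and $(\star)$ does not virtually split. The obstruction must live in \emph{ordinary} cohomology $\HHH^2(\Gamma;\RR)$ (or a related space not automatically killed by amenability); this is precisely why hypotheses (2) and (3) are not redundant, and amenability of $\Gamma$ is used for a different purpose.

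Your sketch also does not close the quantitative gap you yourself flag. In the split case the natural extension $\tilde\psi(g)=\psi\bigl(g\,s(q(g))^{-1}\bigr)$ does satisfy $D(\tilde\psi)\le D(\psi)$, but $\tilde\psi$ is not homogeneous, and homogenization only yields the generic estimate $D(\overline{\tilde\psi})\le 2D(\tilde\psi)$; feeding that into Bavard duality gives $\scl_{G,N}\le 2\scl_G$, not equality. You assert that ``amenable averaging'' or ``finite-index averaging'' recovers a sharp defect bound, but neither procedure is carried out. Moreover, the finite-index step in case (1) implicitly requires the intermediate quasimorphism on $H=N\cdot s_\Lambda(\Lambda)$ to be invariant under conjugation by all of $G$, which does not follow from your construction since $\Lambda$ (hence $H$) is only assumed to be of finite index and need not be normal; without that invariance the transfer from $H$ to $G$ is not available. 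These are exactly the nontrivial points that the cited proofs must supply, and your proposal does not yet contain them.
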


Examples of pairs $(G,N)$ such that $\scl_G$ and $\scl_{G,N}$ do not coincide are also known; see \cite{KK} and \cite{MMM}.

Now we consider the coincidence problem of $\cl_G$ and $\cl_{G,N}$. There is no guarantee that $\cl_G$ and $\cl_{G,N}$ coincide even if $\scl_G$ and $\scl_{G,N}$ coincide. Nevertheless, $\cl_G$ and $\cl_{G,N}$ actually coincide in the following case. Here, a group is said to be \emph{locally cyclic} if every finitely generated subgroup is cyclic:


\begin{thm} \label{thm local cyclicity}
For every triple $(G,N,\Gamma)$ fitting in $(\star)$ such that $\Gamma$ is locally cyclic, $\cl_G$ and $\cl_{G,N}$ coincide on $[G,N]$.
\end{thm}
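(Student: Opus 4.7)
The plan is to establish the nontrivial inequality $\clgh(x) \leq \cl_G(x)$ for every $x \in [G,N]$; the reverse $\cl_G(x) \leq \clgh(x)$ is immediate since every $(G,N)$-commutator is a commutator in $G$. Given a decomposition $x = [g_1,h_1]\cdots[g_n,h_n]$ that realizes $\cl_G(x) = n$, the idea is to prove that each individual factor $[g_i,h_i]$ can be rewritten as a single $(G,N)$-commutator; this will immediately yield $\clgh(x) \leq n$.

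The first ingredient is that the finitely generated subgroup $H := \langle q(g_i), q(h_i) \mid 1 \leq i \leq n\rangle$ of $\Gamma$ is cyclic by local cyclicity. Fix a generator $\bar c$ of $H$ and a lift $c \in G$ with $q(c) = \bar c$, and choose integers $a_i, b_i$ with $q(g_i) = \bar c^{a_i}$ and $q(h_i) = \bar c^{b_i}$. The second ingredient is the elementary identity
\[
[u,v] \ =\ [uv^k,v] \ =\ [u,vu^k] \qquad (u,v \in G,\ k \in \ZZ),
\]
which is a direct expansion. Consequently, the substitutions $g_i \mapsto g_i h_i^k$ and $h_i \mapsto h_i g_i^k$ preserve the value of $[g_i,h_i]$ while inducing the integer operations $(a_i,b_i) \mapsto (a_i + k b_i, b_i)$ and $(a_i,b_i) \mapsto (a_i, b_i + k a_i)$ on the pair of integer exponents.

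These two operations are exactly the elementary row operations that generate $\SL_2(\ZZ)$, so running the standard Euclidean algorithm on $\ZZ^2$ allows us to transform $(a_i,b_i)$ into a pair with one coordinate equal to $0$, without changing $[g_i,h_i]$. Denoting the resulting modified pair by $(g_i',h_i')$, one of $g_i', h_i'$ lies in $\Ker q = N$. Then $[g_i',h_i']$ is a $(G,N)$-commutator: directly if $h_i' \in N$, and otherwise via the identity $[g,x]^{-1} = [xgx^{-1},x^{-1}]$, which shows that the set of $(G,N)$-commutators is closed under inversion. Assembling, $x = \prod_{i=1}^n [g_i',h_i']$ is a product of $n$ mixed commutators, yielding $\clgh(x) \leq n = \cl_G(x)$.

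The only subtle point, rather than a genuine obstacle, is that when $H$ is a finite cyclic group the Euclidean reduction must be carried out on integer lifts $a_i, b_i$ rather than on their images in $H$; since an integer $0$ still projects to $e_\Gamma$, the endpoint of the algorithm correctly produces an element of $N$. The argument is thus essentially a direct commutator-identity calculation, with local cyclicity supplying the common base $\bar c$ that makes the Euclidean algorithm applicable simultaneously to all $n$ factors.
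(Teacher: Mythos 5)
Your proposal is correct and follows essentially the same approach as the paper: reduce each commutator $[g_i,h_i]$ separately to a mixed commutator by using the identities $[u,v]=[uv^k,v]=[u,vu^k]$ to perform a Euclidean-type reduction on the exponents in the cyclic subgroup of $\Gamma$ generated by the images of $g_i$ and $h_i$, then convert a commutator with one entry in $N$ into one of the form $[g,x]$ with $x\in N$. The paper treats the infinite cyclic and finite cyclic cases separately, whereas your device of lifting exponents to $\ZZ$ from the outset handles both uniformly; this is a tidy variant of the same argument, not a genuinely different route.
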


Cyclic groups, $\QQ$ and $\ZZ[1/2]/\ZZ$ are typical examples of locally cyclic groups, and $(\ZZ/2\ZZ)^2$ and $\RR$ are not locally cyclic. Every locally cyclic group is abelian. Note that $\Gamma$ is locally cyclic if and only if the general rank of $\Gamma$ is at most $1$. Hence Theorem~\ref{thm local cyclicity} can be rephrased as follows: if $\genrk (\Gamma) \le 1$, then $\cl_G$ and $\cl_{G,N}$ coincide on $[G,N]$.


Contrastingly, as an application of Theorems \ref{thm:wreath_new} and \ref{thm:wreath_shin}, we construct a pair $(G,N)$ such that $\cl_G$ and $\cl_{G,N}$ do not coincide when $\Gamma$ is abelian but not locally cyclic:

\begin{thm}[result for abelian $\Gamma$]\label{thm:abelian}
Let $\Gamma$ be an abelian group. Set $G=\ZZ\wr \Gamma$ and $N=\bigoplus_{\Gamma}\ZZ$. Then we have
\[
\{(\cl_G(x),\cl_{G,N}(x))\;|\;x\in [G,N]\}=\left\{ \Big( \left\lceil \frac{r}{2}\right\rceil,r \Big)\;|\; r\in \ZZ_{\geq 0},\ r\leq \sperk(\Gamma)\right\},
\]
where $\sperk(\Gamma)$ is the special rank of $\Gamma$, defined in Definition~$\ref{def:local_rank}$.
Here, $\lceil \cdot \rceil$ is the ceiling function.

In particular, if $\Gamma$ is not locally cyclic, then for the pair $(G,N)=(\ZZ\wr \Gamma,\bigoplus_{\Gamma}\ZZ)$, which fits in \emph{split} exact sequence $(\star)$, $\cl_{G}$ and $\cl_{G,N}$ do \emph{not} coincide on $[G,N]$. If moreover $\sperk(\Gamma)=\infty$, then for the same pair $(G,N)$,  we have
\[
\sup_{x\in [G,N]}(\cl_{G,N}(x)-C\cdot \cl_G(x))=\infty
\]
for every real number $C<2$.
\end{thm}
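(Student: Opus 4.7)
The plan is to prove the pointwise identity
\[
\cl_G(x) = \left\lceil \cl_{G,N}(x)/2 \right\rceil \quad \text{for every } x \in [G,N].
\]
Once this is established, the stated set equality follows at once: by Theorem~\ref{thm:wreath_shin}~(2), the range of $\cl_{G,N}$ on $[G,N]$ is $\{r \in \ZZ_{\geq 0} : r \leq \genrk(\Gamma)\}$, and by Lemma~\ref{lem:abel}, $\genrk(\Gamma) = \sperk(\Gamma)$ for abelian $\Gamma$, so the set of pairs $(\cl_G(x), \cl_{G,N}(x))$ is precisely $\{(\lceil r/2 \rceil, r) : 0 \leq r \leq \sperk(\Gamma)\}$. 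For the final ``$\sup = \infty$'' assertion, under the assumption $\sperk(\Gamma) = \infty$ the elements $x_r$ furnished by Theorem~\ref{thm:wreath_shin}~(2) satisfy $\cl_{G,N}(x_r) - C \cdot \cl_G(x_r) = r - C\lceil r/2 \rceil$, which grows like $(2-C)k$ along the subsequence $r = 2k$ whenever $C < 2$.

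The lower bound $\cl_G(x) \geq \lceil \cl_{G,N}(x)/2 \rceil$ is immediate from Theorem~\ref{thm:split}: the split short exact sequence $1 \to N \to G \to \Gamma \to 1$ has abelian kernel $N=\bigoplus_\Gamma\ZZ$, hence $\cl_{G,N}(x) \leq 2\cl_G(x)$, and dividing by two and rounding up gives the desired bound.

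For the upper bound I would compute commutators in $G = N \rtimes \Gamma$ explicitly. Writing elements as $(v, \gamma)$ with $v \in N$ and $\gamma \in \Gamma$, treating $N$ additively and encoding the $\Gamma$-action as left multiplication in $\ZZ[\Gamma]$, a direct calculation using the commutativity of $\Gamma$ yields
\[
[(v_1, \gamma_1), (v_2, \gamma_2)] = \bigl((1 - \gamma_2) v_1 + (\gamma_1 - 1) v_2,\ e_\Gamma\bigr).
\]
In particular $[G,G] \subseteq N$, and a mixed commutator corresponds to the degenerate case $\gamma_2 = e_\Gamma$, producing the form $((\gamma_1 - 1) v_2, e_\Gamma)$. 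The crucial ``two-into-one'' step is: given two mixed commutators with data $(\gamma_1, v_1), (\gamma_2, v_2)$, set $u = \gamma_2 v_2$ and $\mu = \gamma_2^{-1}$; commutativity of $\ZZ[\Gamma]$ gives $(1 - \mu)u = (1 - \gamma_2^{-1})\gamma_2 v_2 = (\gamma_2 - 1) v_2$, so $[(u, \mu), (v_1, \gamma_1)]$ is a single $G$-commutator equal to the sum (i.e., the group-theoretic product, since $N$ is abelian) of the two given mixed commutators. Pairing up a shortest expression of $x$ as $r = \cl_{G,N}(x)$ mixed commutators, and leaving a single leftover---which is itself a $G$-commutator---when $r$ is odd, yields $\cl_G(x) \leq \lceil r/2 \rceil$.

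The main obstacle, and the place where the abelianness of $\Gamma$ is essential, is precisely this ``two-into-one'' assembly: what makes it work is that in the abelian case the general $G$-commutator expression $(1-\gamma_2)v_1 + (\gamma_1 - 1)v_2$ is unconstrained, so any prescribed sum $(\gamma' - 1) v' + (\gamma'' - 1) v''$ of two augmentation-type generators of $N$ can be realized as a single $G$-commutator after commuting $\gamma_2$ with $\gamma_2 - 1$ in $\ZZ[\Gamma]$. Everything else is an assembly of Theorem~\ref{thm:split}, Theorem~\ref{thm:wreath_shin}, and Lemma~\ref{lem:abel}.
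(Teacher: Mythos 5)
Your proposal follows the paper's route exactly: establish the pointwise identity $\cl_G(x) = \lceil \cl_{G,N}(x)/2 \rceil$ on $[G,N]$ via the commutator formulas \eqref{eq mixed commutator_general} and \eqref{eq mixed commutator}, then finish with Theorem~\ref{thm:wreath_shin}~(2) and Lemma~\ref{lem:abel}. Your write-up is a welcome expansion of the paper's one-line appeal to those formulas; routing the lower bound through Theorem~\ref{thm:split} (rather than reading $\cl_{G,N}(x)\le 2\cl_G(x)$ directly off the decomposition $(\gamma-1)w + (1-\lambda)v$ of a $G$-commutator into two mixed commutators) is equivalent.

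The explicit commutator in your ``two-into-one'' step is, however, wrong as written. With $u = \gamma_2 v_2$ and $\mu = \gamma_2^{-1}$, your own formula gives
\[
[(u,\mu),(v_1,\gamma_1)] = \bigl((1-\gamma_1)u + (\mu-1)v_1,\ e_\Gamma\bigr) = \bigl((1-\gamma_1)\gamma_2 v_2 + (\gamma_2^{-1}-1)v_1,\ e_\Gamma\bigr),
\]
which is not $(\gamma_1-1)v_1 + (\gamma_2-1)v_2$ in general --- both coefficients are off. You need to swap the $\Gamma$-coordinates: the correct commutator is $[(u,\gamma_1),(v_1,\mu)] = [(\gamma_2 v_2,\gamma_1),(v_1,\gamma_2^{-1})]$, whose $N$-component is $(1-\gamma_2^{-1})\gamma_2 v_2 + (\gamma_1-1)v_1 = (\gamma_2-1)v_2 + (\gamma_1-1)v_1$, as desired. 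With that one-line correction the argument is complete and sound.
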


In particular, Theorem~\ref{thm:abelian} implies that Theorem~\ref{thm:scl} is no longer true if $\scl_G$ and $\scl_{G,N}$ are replaced with $\cl_G$ and $\cl_{G,N}$, respectively. This means that the coincidence problem of $\cl_G$ and $\cl_{G,N}$ is more subtle than the one of $\scl_G$ and $\scl_{G,N}$.

Theorem~\ref{thm:abelian} provides the following example: for $(G,N)=(\ZZ\wr \RR,\bigoplus_{\RR}\ZZ)$, we have $\sup_{x\in [G,N]}\cl_G(x)=\infty$ but $\scl_G \equiv \scl_{G,N}\equiv 0$ on $[G,N]$; see Example~\ref{example:wreath} for more details.
It is a much more difficult problem to construct a group $G$ with $\sup_{x\in [G,G]}\cl_G(x)=\infty$ but $\scl_{G}\equiv 0$ on $[G,G]$ such that the abelianization $G^{\mathrm{ab}}=G/[G,G]$ is finite.
See \cite{Muranov} and \cite{Mimura} for such examples.

\subsection{The class $\CSD$ and permutational wreath products}\label{subsec:perm}
In the final part of this introduction,  we state our result for (possibly) non-abelian $\Gamma$.  The main task is to bound $\cl_G(x)$ \emph{from above} in certain good situations. For this purpose, we define a class of groups $\CSD$ for a non-empty set $D$ with
\[
D\subseteq \{(g,r)\in \NN^2\;|\; g+1\leq r\leq 2g\},
\]
which is related to surface groups; see Definition~\ref{def:CSD}. 
Here, we consider \emph{permutational} (restricted) wreath products: let $\Gamma$ be a group and $\rho\colon \Gamma \curvearrowright X$ be a $\Gamma$-action on a set $X$. Let $H$ be a group. Then the \emph{permutational wreath product} $H\wr_{\rho}\Gamma$ is the semidirect product $\left(\bigoplus_{X}H\right)\rtimes \Gamma$, where $\Gamma$ acts on $\bigoplus_{X}H$ by shifts via $\rho$.

\begin{thm}[result for permutational wreath products]\label{thm:surface_new}
Let $D$ be a non-empty subset of $\{(g,r)\in \NN^2\;|\; g+1\leq r\leq 2g\}$. Assume that a group $\Gamma$ is a member of $\CSD$, the class defined in Definition~$\ref{def:CSD}$. Then, there exist a group quotient $Q$ of $\Gamma$ and  a  quotient map $\sigma\colon \Gamma\twoheadrightarrow Q$ such that
\begin{eqnarray}\label{eq:perm}
(G,N)=(\ZZ\wr_{\rho_Q}\Gamma,\bigoplus_{Q}\ZZ)
\end{eqnarray}
satisfies the following condition, where $\rho_Q\colon G\curvearrowright Q$ is the composition of $\sigma$ and the natural $Q$-action on $Q$ by left multiplication: for every $(g,r)\in D$, there exists $x_{(g,r)}\in [G,N]$ such that
\begin{eqnarray}\label{eq:surface_ineq}
 \left\lceil \frac{r}{2}\right\rceil\leq \cl_{G}(x_{(g,r)})\leq g \quad \textrm{and}\quad \cl_{G,N}(x_{(g,r)})=r.
\end{eqnarray}

In particular, $\cl_{G}$ and $\cl_{G,N}$ do \emph{not} coincide on $[G,N]$ for the pair $(G,N)$ in \eqref{eq:perm}, which  fits  in \emph{split} short exact sequence $(\star)$ above. If  for a fixed real number $C\in [1,2)$,
\[
\sup_{(g,r)\in D}(Cr-g)=\infty
\]
holds, then we have
\[
\sup_{x\in [G,N]}(\cl_{G,N}(x)-C\cdot \cl_G(x))=\infty
\]
for the pair $(G,N)$ above.
\end{thm}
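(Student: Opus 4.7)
My plan is first to engineer a single quotient $\sigma\colon \Gamma \twoheadrightarrow Q$ (depending on $\Gamma \in \CSD$ but not on $(g,r) \in D$) so that for every $(g,r) \in D$ the $\CSD$-hypothesis furnishes elements $\lambda_1,\dots,\lambda_r \in \Gamma\setminus\{e_\Gamma\}$ whose images $\mu_i := \sigma(\lambda_i) \in Q$ are pairwise distinct, nontrivial, and satisfy $\intrk^Q(\langle\mu_1,\dots,\mu_r\rangle) = r$, and moreover such that a genus-$g$ surface expression in $\Gamma$ yields an explicit factorisation of $x_{(g,r)} := \sum_{i=1}^r \delta_{\mu_i} - r\delta_{e_Q} \in N$ as a product of $g$ commutators in $G$. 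The key structural observation is that since $\rho_Q$ factors through $\sigma$, the $\Gamma$-action on $N = \bigoplus_Q \ZZ$ inside $G = \ZZ\wr_{\rho_Q}\Gamma$ coincides with the natural $Q$-action inside $\ZZ\wr Q$, so mixed commutator brackets on elements of $N$ agree in $G$ and in $\ZZ\wr Q$; in particular $\cl_{G,N}(x) = \cl_{\ZZ\wr Q,\,\bigoplus_Q\ZZ}(x)$ for all $x \in N$.

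Given this reduction, $\cl_{G,N}(x_{(g,r)}) = r$ is immediate from Theorem~\ref{thm:wreath_new} applied to $\ZZ\wr Q$. The upper bound $\cl_G(x_{(g,r)}) \le g$ is the direct reading of the $\CSD$-surface expression as an explicit product $\prod_{i=1}^g [a_i,b_i] = x_{(g,r)}$ in $G$. For the lower bound $\cl_G(x_{(g,r)}) \ge \lceil r/2\rceil$, the short exact sequence $(\star)$ is split (any wreath-product sequence splits) and $N$ is abelian, so Theorem~\ref{thm:split} applies and gives $\cl_{G,N}(x_{(g,r)}) \le 2\cl_G(x_{(g,r)})$. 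The non-coincidence of $\cl_G$ and $\cl_{G,N}$ on $[G,N]$ is then automatic from $r \ge g+1$.

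For the final supremum statement, the basic inequality
\[
\cl_{G,N}(x_{(g,r)}) - C\cdot\cl_G(x_{(g,r)}) \ge r - Cg
\]
combined with the constraints $g+1 \le r \le 2g$ and $C \in [1,2)$ allows one to extract from the hypothesis $\sup_{(g,r)\in D}(Cr-g)=\infty$ a sequence in $D$ along which the right-hand side grows without bound, yielding the desired conclusion.

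The principal technical obstacle lies in the very first step: unpacking the definition of $\CSD$ and exhibiting a single $\sigma\colon\Gamma\twoheadrightarrow Q$ that simultaneously supports, for every $(g,r)\in D$, both the intermediate-rank condition $\intrk^Q(\langle\mu_1,\dots,\mu_r\rangle) = r$ (needed to apply Theorem~\ref{thm:wreath_new}) and the genus-$g$ commutator factorization in $G$ inherited from the $\CSD$-surface. Essentially one must verify that the surface over $B\Gamma$ pulls back through the wreath construction and the quotient $\sigma$ without collapsing either the rank computation or the factorization.
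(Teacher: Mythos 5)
Your overall plan is the paper's: fix the pair $(Q,\sigma)$ furnished by the definition of $\CSD$, for each $(g,r)\in D$ build an element $x_{(g,r)}\in N$ supported in $P_{(g,r)}:=(\sigma\circ\phi_{(g,r)})(\pi_1(\Sigma_g))$, compute $\cl_{G,N}(x_{(g,r)})=r$ via the wreath-product machinery, read off $\cl_G(x_{(g,r)})\le g$ from the surface relation carried through $\phi_{(g,r)}$, and invoke Theorem~\ref{thm:split} for the lower bound on $\cl_G$. Your reduction $\cl_{G,N}=\cl_{\ZZ\wr Q,\bigoplus_Q\ZZ}$ on $N$ (because $\rho_Q$ factors through $\sigma$) is correct and is a slightly cleaner repackaging of what Proposition~\ref{prop:wreath_perm} accomplishes.

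However, the specific element $x_{(g,r)}=\sum_{i=1}^r\delta_{\mu_i}-r\delta_{e_Q}$ with exactly $r$ Kronecker deltas is generally too small to satisfy everything you ask of it. For Theorem~\ref{thm:wreath_new} to give $\cl_{G,N}(x_{(g,r)})=r$ you need $\intrk^Q(\langle\mu_1,\dots,\mu_r\rangle)=r$; for the genus-$g$ factorization to apply you need $\mu_i\in P_{(g,r)}$. But $\intrk^Q(P_{(g,r)})=r$ does \emph{not} imply $\rk(P_{(g,r)})=r$ when $Q$ is non-abelian: $P_{(g,r)}$ may require up to $2g>r$ generators, and since $\intrk^Q$ is monotone, every $r$-generated $\Lambda'\leqslant P_{(g,r)}$ has $\intrk^Q(\Lambda')\le r$, with equality not automatic. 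This is precisely why Proposition~\ref{prop:surface_key} is formulated for an arbitrary $x$ satisfying (i)--(iii) (so $\supp(x)$ is a zero-sum generating set of $P_{(g,r)}$, possibly with more than $r$ deltas), and why the upper bound $\cl_{G,N}(x)\le r$ there comes from Proposition~\ref{prop:fromabove} applied to an \emph{overgroup} $\Theta\geqslant P_{(g,r)}$ of rank $r$ rather than directly from an $r$-delta element. Adopting that flexibility closes the gap; as written your ``principal technical obstacle'' is a real one.

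A smaller point: your inequality $\cl_{G,N}(x_{(g,r)})-C\cl_G(x_{(g,r)})\ge r-Cg$ is the right estimate, but for $C>1$ the hypothesis $\sup_{(g,r)\in D}(Cr-g)=\infty$ does not by itself produce a sequence in $D$ with $r-Cg$ unbounded (for example $D=\{(g,g+1)\;|\;g\ge 2\}$ has $Cr-g=(C-1)g+C\to\infty$ while $r-Cg=(1-C)g+1\to-\infty$), so the extraction you assert at the end needs more justification than your sketch provides.
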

See also Proposition~\ref{prop:surface_key}, which is the key to the construction of $x_{(g,r)}$.

By \eqref{eq:surface_ineq}, if $r\in \{2g-1,2g\}$, then we have
\[
\cl_G(x_{(g,r)})=g.
\]

We present several examples of groups in the class $\CSD$. Our example includes the fundamental groups of mapping tori of certain surface diffeomorphisms. For $g\in \NN$, let $\Mod(\Sigma_g)$ be the mapping class group of $\Sigma_g$.
Here, $\Sigma_g$ denotes the closed connected orientable surface of genus $g$.
We have the \emph{symplectic representation} $s_g\colon \Mod(\Sigma_g)\twoheadrightarrow \Sp(2g,\ZZ)$, which is induced by the action of $\Mod(\Sigma_g)$ on the abelianization of $\pi_1(\Sigma_g)$. For an orientation-preserving diffeomorphism $f\colon \Sigma_g\to \Sigma_g$, let $T_f$ denote the mapping torus of $f$; see Subsection~\ref{subsec:mapping} for details. See also Corollary~\ref{cor:free} and Theorem~\ref{thm:mapping_tori} for further examples. Here for (3) of Theorem \ref{thm:ex_CSD}, we use results by Levitt--Metaftsis \cite{LM} and Amoroso--Zannier \cite{AZ}.

\begin{thm}[examples of groups in $\CSD$]\label{thm:ex_CSD}
\begin{enumerate}[$(1)$]
  \item Every group $\Gamma$ admitting an abelian subgroup $\Lambda$ that is not locally cyclic is a member of $\CC_{\Surf_{\{(1,2)\}}}$.
  \item For every $g\in \NN$, the surface group $\pi_1(\Sigma_g)$ is a member of $\CC_{\Surf_{\{(g,2g)\}}}$. For a non-empty set $J\subseteq \NN$, the free product $\bigast_{g\in J}\pi_1(\Sigma_g)$ is a member of $\CC_{\Surf_{D_J}}$, where $D_J=\{(g,2g)\;|\; g\in J\}$.
  \item There exists an effective absolute constant $n_0\in \NN$ such that the following holds. Assume that $\psi\in \Mod(\Sigma_2)$ satisfies that the order of $s_2(\psi)$ is infinite. Let $f\colon \Sigma_2\to \Sigma_2$ be a diffeomorphism on $\Sigma_2$ whose isotopy class is $\psi$. Then for every $n\in \NN$ with $n\geq n_0$, we have
\[
\pi_1(T_{f^n})\in \CC_{\Surf_{\{(2,3)\}}}\cup \CC_{\Surf_{\{(2,4)\}}}.
\]
  \item Let $g$ be an integer at least $2$. Assume that $\psi\in \Mod(\Sigma_g)$ satisfies that $s_g(\psi)\in \{\pm I_{2g}\}$, where $I_{2g}$ denotes the identity matrix in $\Sp(2g,\ZZ)$. Let $f$ be a diffeomorphism on $\Sigma_g$ whose isotopy class is $\psi$. Then we have
\[
\pi_1(T_{f})\in \CC_{\Surf_{\{(g,2g)\}}}.
\]
\end{enumerate}
\end{thm}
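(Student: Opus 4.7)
The plan is to treat the four parts separately, since they draw on very different inputs. The common goal is that, for each pair $(g,r)\in D$, membership in $\CSD$ requires exhibiting a tuple of $2g$ elements of $\Gamma$ satisfying a surface-type relation together with enough rank-like independence to certify the parameter $r$; this follows the template of Definition~\ref{def:CSD}, which is calibrated precisely to drive the conclusion $\cl_G(x_{(g,r)})\leq g$ and $\cl_{G,N}(x_{(g,r)})=r$ of Theorem~\ref{thm:surface_new}.

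For (1), the non-local-cyclicity of the abelian subgroup $\Lambda\leq \Gamma$ yields a finitely generated non-cyclic subgroup of $\Lambda$, hence two commuting elements $a,b\in \Lambda$ with $\langle a,b\rangle$ non-cyclic. The torus identity $[a,b]=e_\Gamma$, together with the independence provided by non-cyclicity, furnishes the datum for $(g,r)=(1,2)$. For (2), the standard presentation of $\pi_1(\Sigma_g)$ with relation $\prod_{i=1}^{g}[a_i,b_i]=e$ tautologically realizes $(g,2g)$; the free product statement then follows because the generators of each factor $\pi_1(\Sigma_g)$ remain independent in $\bigast_{g\in J}\pi_1(\Sigma_g)$, so each datum assembles separately to yield $\CC_{\Surf_{D_J}}$. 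For (4), the hypothesis $s_g(\psi)\in \{\pm I_{2g}\}$ means that the monodromy acts as $\pm \mathrm{id}$ on $H_1(\Sigma_g;\ZZ)$; hence in the extension $1\to \pi_1(\Sigma_g)\to \pi_1(T_f)\to \ZZ\to 1$ the abelianization of the fiber injects rationally into $H_1(T_f;\QQ)$, so the surface relation from (2) lifts intact through the fiber inclusion and furnishes a witness for $(g,2g)$ in $\pi_1(T_f)$.

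Part (3) is the main obstacle, since one must produce an \emph{absolute} constant $n_0$, uniform in $\psi$. When $s_2(\psi)$ has infinite order in $\Sp(4,\ZZ)$, Levitt--Metaftsis \cite{LM} provides structural control on the cyclic subgroup it generates (in particular on its fixed lattice and on the arithmetic of its characteristic polynomial), while Amoroso--Zannier \cite{AZ} supplies an effective Lehmer-type lower bound on the Mahler measure of eigenvalues of such matrices. Combining these inputs yields an absolute $n_0$ such that for every $n\geq n_0$, the sublattice of $H_1(\Sigma_2;\ZZ)$ fixed by $s_2(\psi^n)$ has controlled rank, and the coinvariant quotient has rank $3$ or $4$ according to an arithmetic dichotomy. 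Translating this back to $\pi_1(T_{f^n})\cong \pi_1(\Sigma_2)\rtimes_{\psi^n}\ZZ$, a judicious combination of the four fiber generators with a lift of a $\ZZ$-generator produces a genus-two tuple witnessing either $(2,3)$ or $(2,4)$. The delicate step will be extracting $n_0$ uniformly in $\psi$ from the effective height estimate and matching the two possible values of $r$ to the precise rank behavior of $s_2(\psi^n)$ dictated by the Amoroso--Zannier bound.
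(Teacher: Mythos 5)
The broad strategy for parts (1)--(3) matches the paper's: for (1) you map $\ZZ^2\simeq\pi_1(\Sigma_1)$ onto a rank-$2$ abelian subgroup and use the identity quotient (this is exactly Proposition~\ref{prop:noZ2}); for (2) you take $\phi=\mathrm{id}$ (or the factor inclusions) composed with the abelianization map $\Ab_{\pi_1(\Sigma_g)}$ (Example~\ref{example:surface_basic} and Corollary~\ref{cor:free}); for (3) you feed the fiber inclusion $\phi\colon\pi_1(\Sigma_2)\hookrightarrow\pi_1(T_{f^n})$ together with $Q=\ZZ^4\rtimes_{s_2(\psi)^n}\ZZ$ into Levitt--Metaftsis and the effective Amoroso--Zannier bound, exactly as in Theorem~\ref{thm:mapping_tori}~(4) and Theorem~\ref{thm:rkmapping}. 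One caveat worth flagging even here: Definition~\ref{def:CSD} is not about ``a tuple of $2g$ elements of $\Gamma$ with rank-like independence in $\Gamma$''; the independence (the quantity $\intrk^Q(P)$) is measured in a \emph{quotient} $Q$, after forcing $P=(\sigma\circ\phi)(\pi_1(\Sigma_g))$ to be abelian. Your sketch of (2) in particular does not mention the pass to $\pi_1(\Sigma_g)^{\ab}\simeq\ZZ^{2g}$, which is essential because $\pi_1(\Sigma_g)$ itself is nonabelian for $g\geq 2$. Also the rank dichotomy in (3) is not a dichotomy between two different constructions: the triple is the same, and $r\in\{3,4\}$ is simply $\min\{4,\,1+\OR(s_2(\psi)^n)\}$ from Corollary~\ref{cor:intrank}.

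The genuine gap is in (4). You assert that when $s_g(\psi)\in\{\pm I_{2g}\}$ ``the abelianization of the fiber injects rationally into $\HHH_1(T_f;\QQ)$.'' This is false when $s_g(\psi)=-I_{2g}$: from the Wang sequence, $\HHH_1(T_f;\ZZ)\simeq\mathrm{coker}(s_g(\psi)-I_{2g})\oplus\ZZ\simeq(\ZZ/2\ZZ)^{2g}\oplus\ZZ$, so the image of the fiber's $\HHH_1$ is entirely torsion and $\HHH_1(T_f;\QQ)\simeq\QQ$. Your argument as written therefore collapses in the $-I_{2g}$ case. The conclusion is still correct, but for a different reason: the paper takes the \emph{nonabelian} quotient $Q=\ZZ^{2g}\rtimes_{s_g(\psi)}\ZZ$ of $\pi_1(T_f)$ (Lemma~\ref{lem:mapping}) and computes $\intrk^Q(\ZZ^{2g})=\min\{2g,\,1+\OR(-I_{2g})\}=2g$ via Corollary~\ref{cor:intrank}, noting $\OR(-I_{2g})=2g$ because each $(-I)$-orbit generates only a rank-$1$ sublattice. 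Alternatively, even sticking with $Q=\HHH_1(T_f;\ZZ)$, one could use Lemma~\ref{lem:abel}: since $Q$ is abelian, $\intrk^{Q}(P)=\rk(P)$, and $\rk\bigl((\ZZ/2\ZZ)^{2g}\bigr)=2g$ — torsion groups can have large rank, so rational injectivity is not what matters. Either fix would repair your step, but the reasoning you gave does not.
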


The present paper is organized as follows. In Section~\ref{sec:proof_new}, we prove  Theorems~\ref{thm:wreath_shin} and \ref{thm:split} (hence, Theorem~\ref{thm:wreath_new} as well). In Section~\ref{sec:proof}, we prove   Theorems \ref{thm local cyclicity} and \ref{thm:abelian}. In Section~\ref{sec:surface},  we define the class $\CSD$ and prove Theorem~\ref{thm:surface_new}. In Section~\ref{sec:surface_example}, we discuss examples of groups in $\CSD$, including ones from mapping tori of certain surface diffeomorphisms. Theorem~\ref{thm:ex_CSD} is proved there.  We make concluding remarks in Section~\ref{sec:remark}:  there, we exhibit examples from symplectic geometry  (Subsection~\ref{subsec:symp}) and we collect basic properties of general rank and provide some examples of groups of finite general rank (Subsection~\ref{subsec:gen_rk}).

\section{Proofs of Theorem~\ref{thm:wreath_shin} and Theorem~\ref{thm:split}}\label{sec:proof_new}
\subsection{Proof of Theorem~\ref{thm:wreath_shin}}\label{subsec:wreath}
In this subsection, we study $\cl_{G,N}$ for the pair $(G,N)=(A\wr\Gamma,\bigoplus_{\Gamma}A)$, where $A$ is an additive group and $\Gamma$ is a (possibly non-abelian) group. In this setting, we write $(v,\gamma)$ to indicate the corresponding element of $G$, where $v\in N$ and $\gamma\in \Gamma$. Also, the group $\Gamma$ acts on $N$ as the left-regular representation: for $\lambda\in \Gamma$ and for $u\in N$, $\lambda u\in N$ is a function defined as
\[
\lambda u(\gamma):=u(\lambda^{-1}\gamma)
\]
for $\gamma\in \Gamma$. This action is used to define $G=A\wr \Gamma$. We observe that $\Gamma$ can also act on $N$ as the right-regular representation: for $\lambda\in \Gamma$ and for $u\in N$, $u\lambda \in N$ is a function defined as
\[
u\lambda(\gamma):=u(\gamma\lambda^{-1})
\]
for $\gamma\in \Gamma$. We note that these two actions commute.

\begin{lem}\label{lem:commutator}
Let $A$ be an additive group and $\Gamma$ be a group. Let $G=A\wr \Gamma$, and $N=\bigoplus_{\Gamma}A$. Then for every $v,w\in N$ and every $\gamma,\lambda\in \Gamma$, we have
\[
\big[ (v,\gamma), (w, \lambda)\big] = (\gamma w -[\gamma, \lambda]w +v-\gamma \lambda\gamma^{-1}v, [\gamma,\lambda]).
\]
In particular, if $\gamma$ and $\lambda$ commute, then
\begin{eqnarray} \label{eq mixed commutator_general}
\big[ (v,\gamma), (w, \lambda)\big] = (\gamma w - w +v-\lambda v, e_\Gamma).
\end{eqnarray}
Also, we have
\begin{eqnarray} \label{eq mixed commutator}
\big[ (v,\gamma), (w, e_{\Gamma})\big] = (\gamma w - w, e_\Gamma).
\end{eqnarray}
\end{lem}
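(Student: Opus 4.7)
The proof will be a direct calculation in the semidirect product $G=A\wr\Gamma=N\rtimes\Gamma$, so my plan is to record the multiplication and inversion formulas, then expand $[(v,\gamma),(w,\lambda)]=(v,\gamma)(w,\lambda)(v,\gamma)^{-1}(w,\lambda)^{-1}$ step by step and match the result with the stated formula.

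First I would remind the reader that the semidirect product structure gives
\[
(v,\gamma)(w,\lambda)=(v+\gamma w,\,\gamma\lambda)\qquad\text{and}\qquad (v,\gamma)^{-1}=(-\gamma^{-1}v,\,\gamma^{-1}),
\]
where $\gamma w$ denotes the left-regular action of $\Gamma$ on $N$ that was just recalled in the paragraph preceding the lemma. These two formulas determine every computation in $G$, so the proof is really a bookkeeping exercise.

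Next I would carry out the commutator in two halves. The first half gives
\[
(v,\gamma)(w,\lambda)=(v+\gamma w,\,\gamma\lambda).
\]
For the second half,
\[
(v,\gamma)^{-1}(w,\lambda)^{-1}=(-\gamma^{-1}v,\,\gamma^{-1})(-\lambda^{-1}w,\,\lambda^{-1})=(-\gamma^{-1}v-\gamma^{-1}\lambda^{-1}w,\,\gamma^{-1}\lambda^{-1}).
\]
Multiplying the two halves, the $\Gamma$-coordinate is $\gamma\lambda\gamma^{-1}\lambda^{-1}=[\gamma,\lambda]$, and the $N$-coordinate is
\[
(v+\gamma w)+\gamma\lambda\bigl(-\gamma^{-1}v-\gamma^{-1}\lambda^{-1}w\bigr)
=v+\gamma w-\gamma\lambda\gamma^{-1}v-\gamma\lambda\gamma^{-1}\lambda^{-1}w.
\]
Rewriting $\gamma\lambda\gamma^{-1}\lambda^{-1}w$ as $[\gamma,\lambda]w$ and rearranging yields exactly $\gamma w-[\gamma,\lambda]w+v-\gamma\lambda\gamma^{-1}v$, which proves the first assertion.

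Finally I would specialize. When $\gamma$ and $\lambda$ commute, $[\gamma,\lambda]=e_\Gamma$ and $\gamma\lambda\gamma^{-1}=\lambda$, so the formula collapses to \eqref{eq mixed commutator_general}. Setting $\lambda=e_\Gamma$ in the general formula gives $[\gamma,\lambda]=e_\Gamma$ and $\gamma\lambda\gamma^{-1}=e_\Gamma$, so the $v$-terms cancel and $[\gamma,\lambda]w=w$, yielding \eqref{eq mixed commutator}. There is no real obstacle here; the only thing to watch carefully is the action convention (left-regular, so that $\gamma\lambda w=(\gamma\lambda)w$ rather than $\lambda(\gamma w)$), since a sign or ordering slip anywhere would swap $\gamma\lambda\gamma^{-1}v$ with $\lambda v$ and break the subsequent arguments in the paper.
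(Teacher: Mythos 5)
Your proof is correct and follows essentially the same approach as the paper: a direct expansion of the commutator using the semidirect-product multiplication and inversion formulas, followed by the two specializations. The only cosmetic difference is that you group the four factors into two pairs before multiplying, whereas the paper multiplies left to right; the computation and all intermediate terms are the same.
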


\begin{proof}
Let $(v,\gamma), (v', \gamma'), (w, \lambda) \in G$. Then we have
\[ (v, \gamma) (v', \gamma') = (v + \gamma v', \gamma \gamma' ), \; (v, \gamma)^{-1} = (- \gamma^{-1} v, \gamma^{-1}).\]
Using these, we have
\begin{eqnarray*}
\big[ (v, \gamma), (w, \lambda) \big] & = & (v, \gamma) (w, \lambda) (- \gamma^{-1} v, \gamma^{-1}) (- \lambda^{-1} w, \lambda^{-1}) \\
& = & (v + \gamma w, \gamma \lambda) (- \gamma^{-1} v, \gamma^{-1})(- \lambda^{-1} w, \lambda^{-1}) \\
& = & (v + \gamma w - \gamma \lambda \gamma^{-1} v , \gamma \lambda \gamma^{-1})  (- \lambda^{-1} w, \lambda^{-1})\\
& = & (v + \gamma w - \gamma \lambda \gamma^{-1} v - [\gamma,\lambda]w , [\gamma,\lambda]),
\end{eqnarray*}
as desired. This immediately implies \eqref{eq mixed commutator_general} and \eqref{eq mixed commutator}.
\end{proof}

The following lemma is a key to bounding mixed commutator lengths from below.

\begin{lem}\label{lem:claim}
Let $A$ be an  additive group and $\Gamma$ be a group. Let $G=A\wr \Gamma$, and $N=\bigoplus_{\Gamma}A$. Assume that $u\in N$ is written as
\[
u=\sum_{i=1}^k (\gamma_i w_i-w_i)
\]
for some $k\in \NN$, some $\gamma_1,\ldots ,\gamma_k\in \Gamma$ and some $w_1,\ldots ,w_k\in N$. Let $\Lambda=\langle \gamma_1,\ldots ,\gamma_k\rangle$. Then we have
\[
\sum_{\lambda\in\Lambda}u(\lambda)=0.
\]
\end{lem}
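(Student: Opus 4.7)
The plan is entirely formal: expand $u(\lambda)$ using the definition of the $\Gamma$-action on $N$ and exploit the left-translation invariance of the counting sum on $\Lambda$.

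First I would unwind the definitions. Since the $\Gamma$-action on $N=\bigoplus_\Gamma A$ is the left-regular one, $(\gamma_i w_i)(\lambda)=w_i(\gamma_i^{-1}\lambda)$ for every $\lambda\in\Gamma$. Because each $w_i$ lies in $\bigoplus_\Gamma A$, it is finitely supported; in particular $w_i$ has only finitely many nonzero values on the (possibly infinite) set $\Lambda$, so all sums appearing below contain only finitely many nonzero terms and can be rearranged without any convergence issue. Using linearity,
\[
\sum_{\lambda\in\Lambda} u(\lambda)
=\sum_{i=1}^k\sum_{\lambda\in\Lambda}\bigl(w_i(\gamma_i^{-1}\lambda)-w_i(\lambda)\bigr).
\]

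The key observation is then that $\gamma_i\in\Lambda$, so left multiplication by $\gamma_i^{-1}$ restricts to a bijection $\Lambda\to\Lambda$. Reindexing with $\mu=\gamma_i^{-1}\lambda$ therefore gives
\[
\sum_{\lambda\in\Lambda} w_i(\gamma_i^{-1}\lambda)=\sum_{\mu\in\Lambda} w_i(\mu),
\]
which cancels the second term for each $i$. Summing over $i$ yields $\sum_{\lambda\in\Lambda}u(\lambda)=0$, as required.

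There is no real obstacle here; the only substantive check is that the finite support of the $w_i$ legitimizes the rearrangement of the sums (in case $\Lambda$ is infinite). It is worth noting for context that this lemma is the natural source of the zero-sum conditions $(ii)$ and $(iii)$ appearing in Theorem~\ref{thm:wreath_shin}: combined with Lemma~\ref{lem:commutator}, which expresses a mixed commutator in $G=A\wr\Gamma$ with trivial $\Gamma$-component as a sum of terms of the form $\gamma w-w$, it will force every $x\in[G,N]$ to satisfy $\sum_{\lambda\in\Lambda}x(\lambda)=0$ for a suitable $\Lambda$, and in turn give the lower bound $\cl_{G,N}(x)\ge \intrk^\Gamma(\Lambda)$.
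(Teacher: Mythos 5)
Your proof is correct and follows exactly the paper's argument: reduce to a single term $\gamma_i w_i - w_i$, observe that left multiplication by $\gamma_i^{-1}$ (since $\gamma_i\in\Lambda$) is a bijection $\Lambda\to\Lambda$, and reindex to get cancellation. The only addition is your explicit remark that finite support of each $w_i$ justifies the rearrangement, which the paper leaves implicit.
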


\begin{proof}
It suffices to show that
\[ \sum_{\lambda \in \Lambda} (\gamma_i w_i - w_i)(\lambda) = 0 \]
for every $i \in \{ 1, \cdots, k\}$. Since $\gamma_i \in \langle \gamma_1, \cdots, \gamma_k \rangle$, we conclude that the correspondence $\Gamma \to \Gamma$, $\gamma \mapsto \gamma_i^{-1} \gamma$ sends $\Lambda$ to $\Lambda$ bijectively. This clearly verifies the assertion above.
\end{proof}

In what follows, we mainly discuss the case where $A=\ZZ$. In this case, we recall from the introduction that $\delta_\lambda \colon \Gamma \to \ZZ$ for $\lambda\in \Gamma$ denotes   the Kronecker delta function at $\lambda$. Then, the left and right actions of $G$ on $N$ are expressed as $\gamma \delta_{\lambda} =\delta_{\gamma\lambda}$ and $\delta_{\lambda}\gamma =\delta_{\lambda\gamma}$ for $\gamma,\lambda\in \Gamma$. The following proposition provides the estimate in  Theorem~\ref{thm:wreath_shin}~(1) from below.

\begin{prop}\label{prop:frombelow}
Let  $\Gamma$ be a group. Set $G=\ZZ\wr \Gamma$, and $N=\bigoplus_{\Gamma}\ZZ$. Let $k\in \NN$.  Let $\Lambda$ be a subgroup of $\Gamma$. Assume that $x\in N$ fulfills the following two conditions:
\begin{itemize}
  \item $e_{\Gamma}\in \supp(x)$.
  \item For every zero-sum set $S$ for x satisfying $S\subseteq \supp(x)$ and $e_{\Gamma}\in S$, we have $\langle S \rangle \geqslant \Lambda$.
\end{itemize}
Then, we have
\[
\cl_{G,N}(x)\geq \intrk^{\Gamma}(\Lambda).
\]
Here, we set $\cl_{G,N}(x)=\infty$ if $x\not \in [G,N]$.
\end{prop}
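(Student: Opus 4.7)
The plan is to run an argument parallel to that of Lemma~\ref{lem:claim}. Suppose $\cl_{G,N}(x)=n<\infty$ (otherwise there is nothing to prove by the convention $\cl_{G,N}(x)=\infty$). Then I can write $x$ as a product of $n$ mixed commutators $[(v_i,\gamma_i),(w_i,e_\Gamma)]$ with $(v_i,\gamma_i)\in G$ and $(w_i,e_\Gamma)\in N$. Using formula \eqref{eq mixed commutator}, each such factor equals $(\gamma_i w_i-w_i,e_\Gamma)$, so inside $N\cong \bigoplus_\Gamma\ZZ$ we obtain a decomposition
\[
x=\sum_{i=1}^n (\gamma_i w_i-w_i)
\]
for some $\gamma_1,\ldots,\gamma_n\in \Gamma$ and $w_1,\ldots,w_n\in N$.

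Next I introduce the key subgroup $\Theta:=\langle \gamma_1,\ldots,\gamma_n\rangle\leqslant \Gamma$, which is generated by at most $n$ elements and therefore satisfies $\rk(\Theta)\leq n$. The aim is to prove $\Lambda\leqslant \Theta$: once this is secured, the chain $\Lambda\leqslant \Theta\leqslant \Gamma$ combined with $\rk(\Theta)\leq n$ yields $\intrk^{\Gamma}(\Lambda)\leq n$ directly from Definition~\ref{def:int_rk}, which is exactly what we want.

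The inclusion $\Lambda\leqslant \Theta$ is where the two hypotheses will be activated. Applying Lemma~\ref{lem:claim} to the displayed expression (with its $\Lambda$, $k$ played by our $\Theta$, $n$) gives $\sum_{\lambda\in\Theta}x(\lambda)=0$. I then set $S:=\supp(x)\cap\Theta$; since values of $x$ vanish outside $\supp(x)$, $S$ is a zero-sum set for $x$ with $S\subseteq \supp(x)$, and $e_\Gamma\in S$ follows from the first hypothesis together with $e_\Gamma\in\Theta$. The second hypothesis then applies to $S$ and delivers $\langle S\rangle\geqslant \Lambda$. Since $S\subseteq\Theta$ trivially gives $\langle S\rangle\leqslant\Theta$, this sandwich closes the loop and forces $\Lambda\leqslant\Theta$, completing the proof.

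There is no serious obstacle here beyond Lemma~\ref{lem:claim} itself, which is already in hand. The only conceptual point is to recognise that hypothesis (ii) of the proposition is not merely a technical condition but is calibrated precisely to convert the zero-sum information that Lemma~\ref{lem:claim} produces for \emph{any} presentation of $x$ as a sum of terms $\gamma_i w_i - w_i$ into the subgroup inclusion $\Lambda\leqslant\Theta$; the universal quantifier over zero-sum subsets $S$ in (ii) is exactly what allows the argument to handle every possible expression of $x$ as a product of mixed commutators uniformly.
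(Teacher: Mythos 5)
Your proof is correct and is essentially the paper's own argument: decompose a minimal mixed-commutator expression into the form $x=\sum_{i=1}^n(\gamma_i w_i-w_i)$ via \eqref{eq mixed commutator}, invoke Lemma~\ref{lem:claim} for $\Theta=\langle\gamma_1,\ldots,\gamma_n\rangle$, and use hypothesis (ii) applied to $S=\supp(x)\cap\Theta$ to force $\Lambda\leqslant\Theta$, whence $\intrk^{\Gamma}(\Lambda)\leq\rk(\Theta)\leq n$. No substantive difference from the paper's proof.
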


\begin{proof}
If $x\not \in [G,N]$, then the inequality trivially holds. In what follows, we assume that $x\in [G,N]$. Set $l=\cl_{G,N}(x)$. Then, by \eqref{eq mixed commutator}, there exist  $\gamma_1, \cdots, \gamma_l \in \Gamma$ and $w_1, \cdots, w_l \in N$ such that
\[
x = \sum_{i=1}^l (\gamma_i w_i - w_i).
\]
Set $\Theta = \langle \gamma_1, \cdots, \gamma_l \rangle$. Then Lemma~\ref{lem:claim} implies that
\begin{eqnarray} \label{eq Theta}
\sum_{\theta \in \Theta} x(\theta) = 0.
\end{eqnarray}
Set $S=\supp(x)\cap \Theta$. Then by \eqref{eq Theta}, $S$ is a zero-sum set for $x$ that satisfies $S\subseteq \supp(x)$ and $e_{\Gamma}\in S$. Hence by assumption, $\langle S\rangle \geqslant \Lambda$ holds. This implies that
\[
\Theta \geqslant \Lambda.
\]
Therefore, we have
\[
l\geq \rk(\Theta)\geq \intrk^{\Gamma}(\Lambda). \qedhere
\]
\end{proof}

The next proposition, in turn, provides the estimate in  Theorem~\ref{thm:wreath_shin}~(1) from above.

\begin{prop}\label{prop:fromabove}
Let  $\Gamma$ be a group. Let $G=\ZZ\wr \Gamma$, and $N=\bigoplus_{\Gamma}\ZZ$. Let $\Theta$ be a non-trivial finitely generated subgroup of $\Gamma$.  Assume that an element $x\in N$ satisfies the following two conditions:
\begin{itemize}
  \item $\supp(x)\subseteq \Theta$.
  \item $\Theta$ is a zero-sum set for $x$.
\end{itemize}
Then, we have  $x\in [G,N]$ and
\[
\cl_{G,N}(x)\leq \rk(\Theta).
\]
\end{prop}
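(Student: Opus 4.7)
My plan is to realize $x$ explicitly as a product of $r=\rk(\Theta)$ mixed commutators via formula \eqref{eq mixed commutator} of Lemma~\ref{lem:commutator}. Since $\Theta$ is a non-trivial finitely generated subgroup of $\Gamma$, the integer $r$ satisfies $1\leq r<\infty$, and I may choose $\gamma_1,\ldots,\gamma_r\in\Theta$ with $\Theta=\langle \gamma_1,\ldots,\gamma_r\rangle$. Next I identify the submodule $\bigoplus_{\Theta}\ZZ$ of $N$ with the integral group ring $\ZZ[\Theta]$; under this identification the left $\Theta$-action on $N$ by shift restricts to left multiplication in $\ZZ[\Theta]$. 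The hypothesis $\supp(x)\subseteq\Theta$ places $x$ in $\ZZ[\Theta]$, and the zero-sum condition $\sum_{\theta\in\Theta}x(\theta)=0$ says exactly that $x$ lies in the augmentation ideal $I(\Theta)=\ker(\epsilon\colon\ZZ[\Theta]\to\ZZ)$.

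The core algebraic step is the standard fact that $I(\Theta)$ is generated, as a left $\ZZ[\Theta]$-module, by $\{\gamma_i-1\}_{i=1}^{r}$. I would verify this by induction on word length in $\gamma_1^{\pm 1},\ldots,\gamma_r^{\pm 1}$: for every $\theta\in\Theta$, the element $\theta-1$ lies in the left ideal generated by $\{\gamma_i-1\}_{i=1}^{r}$, via the elementary identities $\alpha\beta-1=\alpha(\beta-1)+(\alpha-1)$ and $\gamma^{-1}-1=-\gamma^{-1}(\gamma-1)$. Applying this to $x\in I(\Theta)$, I obtain $w_1,\ldots,w_r\in\ZZ[\Theta]\subseteq N$ with
\[
x=\sum_{i=1}^{r}(\gamma_i-1)w_i=\sum_{i=1}^{r}(\gamma_i w_i-w_i).
\]

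To finish, formula \eqref{eq mixed commutator} gives
\[
\bigl[(0,\gamma_i),(w_i,e_{\Gamma})\bigr]=(\gamma_i w_i-w_i,\,e_{\Gamma}),
\]
and each such factor is a mixed commutator because $(w_i,e_{\Gamma})\in N$. Since all these commutators lie in the abelian subgroup $N\subseteq G$, their product in $G$ equals $\bigl(\sum_{i=1}^{r}(\gamma_i w_i-w_i),\,e_{\Gamma}\bigr)=(x,e_{\Gamma})$. This simultaneously yields $x\in[G,N]$ and $\cl_{G,N}(x)\leq r=\rk(\Theta)$. The only delicate point is the augmentation-ideal identity above; once that is in hand, the rest is a mechanical translation through Lemma~\ref{lem:commutator}.
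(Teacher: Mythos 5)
Your approach is essentially the same as the paper's: the paper's set $M=\{\sum_{i=1}^{r}(\theta_i w_i-w_i)\;|\;w_i\in N\}$ is exactly, under the identification $\bigoplus_{\Theta}\ZZ\cong\ZZ[\Theta]$, the right ideal $\sum_i(\theta_i-1)\ZZ[\Theta]$, and its proof that $\delta_{\theta}-\delta_{e_{\Gamma}}\in M$ by induction on word length is the standard generation argument for the augmentation ideal written out by hand. Your group-ring phrasing is cleaner conceptually, but it is the same argument.

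There is, however, a left/right slip that you should correct. What you need, to match the commutator formula $\bigl[(0,\gamma_i),(w_i,e_{\Gamma})\bigr]=(\gamma_i w_i-w_i,e_{\Gamma})$, is a decomposition $x=\sum_i(\gamma_i-1)w_i$ with the generator on the \emph{left} and the arbitrary element $w_i$ on the \emph{right}; that is, you need $x$ to lie in the \emph{right} ideal $\sum_i(\gamma_i-1)\ZZ[\Theta]$. But the identities you cite, $\alpha\beta-1=\alpha(\beta-1)+(\alpha-1)$ and $\gamma^{-1}-1=-\gamma^{-1}(\gamma-1)$, put the generator on the right and therefore prove instead that $\theta-1$ lies in the \emph{left} ideal $\sum_i\ZZ[\Theta](\gamma_i-1)$; this would produce $x=\sum_i w_i(\gamma_i-1)$, which is not directly of the commutator form. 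The fix is simply to use the mirror identities $\alpha\beta-1=(\alpha-1)\beta+(\beta-1)$ and $\gamma^{-1}-1=-(\gamma-1)\gamma^{-1}$, which give the right-ideal statement you actually need; this is precisely what the paper does when it peels off the last letter $\lambda$ of $\theta=\theta'\lambda$ and uses the right $\Gamma$-action. With that correction, the rest of your argument (multiplying the $r$ commutators inside the abelian subgroup $N$ to recover $(x,e_{\Gamma})$) is correct and complete.
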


\begin{proof}
Let $r=\rk(\Theta)\in \NN$. Fix elements $\theta_1,\ldots ,\theta_r\in \Theta$ that satisfy $\langle \theta_1,\ldots ,\theta_r\rangle =\Theta$. Define $M$ as
\[
M=\left\{\sum_{i=1}^r (\theta_iw_i-w_i) \; \middle|\; w_1,w_2,\ldots ,w_r\in N \right\}.
\]
By \eqref{eq mixed commutator}, every element $z$ in $M$ satisfies that
\[
\cl_{G,N}(z)\leq r.
\]
In what follows, we will show that $x\in M$.

Observe that $M$ is a $\ZZ$-module equipped with the right $\Gamma$-action $z\mapsto z \gamma$ for $z\in M$ and $\gamma \in \Gamma$. To see that $M$ is closed under the right action, write $z\in M$ as $\sum_{i=1}^r (\theta_iw_i-w_i)$, where $w_1,\ldots,w_r\in N$. Then, since the left and right actions of $\Gamma$ on $N$ commute, we conclude that  for every $\gamma\in \Gamma$,
\[
z\gamma=\sum_{i=1}^r (\theta_i(w_i\gamma)-(w_i\gamma))\in M.
\]
We claim that for every $\theta \in \Theta$,
\begin{eqnarray}\label{eq:induction}
\delta_{\theta}-\delta_{e_{\Gamma}}\in M
\end{eqnarray}
holds true. We verify this claim by induction on the word length of $\theta$ with respect to $\{\theta_1,\ldots ,\theta_r\}$. If $\theta=e_{\Gamma}$ or $\theta\in \{\theta_1^{\pm},\ldots ,\theta_r^{\pm}\}$, then \eqref{eq:induction} holds. Indeed, to see the case where $\theta\in \{\theta_1^{-1},\ldots ,\theta_r^{-1}\}$,  we have for every $1\leq i\leq r$,
\[
\delta_{\theta_i^{-1}}-\delta_{e_{\Gamma}}=(\delta_{e_{\Gamma}}-\delta_{\theta_i})\theta_i^{-1}=(\delta_{e_{\Gamma}}-\theta_i\delta_{e_{\Gamma}})\theta_i^{-1}\in M.
\]
In our induction step, take an arbitrary element $\theta\in \Theta$ whose word length with respect to $\{\theta_1,\ldots ,\theta_r\}$ is at least $2$. Write $\theta$ as
\[
\theta= \theta'\lambda,
\]
where $\lambda\in \{\theta_1^{\pm},\ldots ,\theta_r^{\pm}\}$ and the word length of $\theta'$ with respect to $\{\theta_1,\ldots ,\theta_r\}$ is smaller than that of $\theta$. Our induction hypothesis implies that
\[
\delta_{\theta'}-\delta_{e_{\Gamma}}\in M.
\]
We then have
\begin{eqnarray*}
\delta_{\theta}-\delta_{e_{\Gamma}} &=& \delta_{\theta'\lambda}-\delta_{e_{\Gamma}}\\
&=& (\delta_{\theta'\lambda}-\delta_{\lambda})+(\delta_{\lambda}-\delta_{e_{\Gamma}})\\
&=& (\delta_{\theta'}-\delta_{e_{\Gamma}})\lambda+(\delta_{\lambda}-\delta_{e_{\Gamma}})\in M;
\end{eqnarray*}
this ends our proof of the claim.

Finally,  we have
\[
x=\sum_{\theta\in \Theta}x(\theta)(\delta_{\theta}-\delta_{e_{\Gamma}})
\]
by assumption.  Therefore, by the claim above (see \eqref{eq:induction}) we conclude that
\[
x\in M.
\]
This completes the proof.
\end{proof}

To prove Theorem~\ref{thm:wreath_shin}~(2), we employ the following result on general and intermediate ranks. We include the proof for the convenience of the reader.

\begin{lem}\label{lem:gen_rk}
Let $\Gamma$ be a group. Assume that $r\in \NN$ satisfies $r\leq \genrk(\Gamma)$. Then, there exists a finitely generated subgroup $\Lambda_r$ of $\Gamma$ such that
\[
r=\intrk^{\Gamma}(\Lambda_r).
\]
\end{lem}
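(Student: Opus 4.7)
The plan is to exhibit $\Lambda_r$ by walking along a chain of finitely generated subgroups, using a discrete intermediate value argument on $\intrk^{\Gamma}$.

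First, since $\genrk(\Gamma) \geq r$ and $r \in \NN$, the definition of supremum yields a finitely generated subgroup $\Lambda \leqslant \Gamma$ with $\intrk^{\Gamma}(\Lambda) \geq r$. Fix a finite generating set $\Lambda = \langle s_1, \ldots, s_n \rangle$ (so necessarily $n \geq r$), and form the chain $\Lambda_0 = \{e_{\Gamma}\} \leqslant \Lambda_1 \leqslant \cdots \leqslant \Lambda_n = \Lambda$, where $\Lambda_k = \langle s_1, \ldots, s_k\rangle$. Set $m_k = \intrk^{\Gamma}(\Lambda_k)$. Monotonicity of $\intrk^{\Gamma}$ under inclusion of subgroups is immediate from the definition (a larger $\Lambda$ has fewer candidate $\Theta$'s), so $0 = m_0 \leq m_1 \leq \cdots \leq m_n$, and $m_n \geq r$ by the choice of $\Lambda$.

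The key step is the inequality $m_{k+1} \leq m_k + 1$. Since $\rk$ takes values in $\ZZ_{\geq 0} \cup \{\infty\}$, the infimum defining $\intrk^{\Gamma}(\Lambda_k)$ is attained whenever $m_k < \infty$; pick $\Theta_k \leqslant \Gamma$ containing $\Lambda_k$ with $\rk(\Theta_k) = m_k$. Then $\langle \Theta_k, s_{k+1}\rangle$ contains $\Lambda_{k+1}$ and is generated by $m_k + 1$ elements, so $m_{k+1} \leq m_k + 1$. In particular, each $m_k$ is finite (by induction from $m_0 = 0$), so the hypothesis of attainment propagates through the chain.

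Finally, I would apply a discrete intermediate value argument: let $k$ be the least index with $m_k \geq r$. Since $m_0 = 0 < r \leq m_n$, such a $k$ exists and $k \geq 1$. Then $m_{k-1} < r$, so $m_{k-1} \leq r - 1$, and the key inequality gives $m_k \leq m_{k-1} + 1 \leq r$, forcing $m_k = r$. Setting $\Lambda_r := \Lambda_k$ completes the proof. There is no real obstacle here; the only point requiring care is checking that the infimum is attained (so that the $\Theta_k$ actually exists), which follows because $\rk$ is integer-valued and we inductively stay in the finite regime.
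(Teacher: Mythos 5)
Your proof is correct, and it takes a genuinely different route from the paper's. The paper takes a finitely generated $\Lambda$ with $\intrk^{\Gamma}(\Lambda)=s>r$ (handling $s=r$ as a degenerate base case), chooses an optimal overgroup $\Theta$ with $\Lambda\leqslant\Theta\leqslant\Gamma$ and $\rk(\Theta)=s$, fixes generators $\theta_1,\ldots,\theta_s$ of $\Theta$, and simply declares $\Lambda_r:=\langle\theta_1,\ldots,\theta_r\rangle$; the equality $\intrk^{\Gamma}(\Lambda_r)=r$ is then verified by contradiction, since an overgroup $\Theta_r\geqslant\Lambda_r$ of rank $<r$ together with $\theta_{r+1},\ldots,\theta_s$ would generate an overgroup of $\Lambda$ of rank $<s$. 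You instead take generators of $\Lambda$ itself, build the ascending chain $\Lambda_0\leqslant\cdots\leqslant\Lambda_n=\Lambda$, and run a discrete intermediate-value argument on $m_k=\intrk^{\Gamma}(\Lambda_k)$. The load-bearing inequality $m_{k+1}\leq m_k+1$ is the same one-generator rank bookkeeping that the paper compresses into $\rk(\overline{\Theta})\leq\rk(\Theta_r)+(s-r)$, but your framing isolates it cleanly and makes it visually evident that every value between $0$ and $m_n$ must be achieved along the chain. The paper's argument is shorter and exhibits $\Lambda_r$ explicitly in one step (notably, as a subgroup of the witnessing overgroup $\Theta$ rather than of $\Lambda$); yours is more constructive and requires no contradiction. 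Both correctly use that the infimum in $\intrk^{\Gamma}$ is attained on finitely generated input, since $\rk$ is $\ZZ_{\geq 0}$-valued and bounded above by the number of generators.
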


\begin{proof}
The conclusion holds by definition if $r=\genrk(\Gamma)$. In what follows, we treat the remaining case: in this case, there exists a finitely generated subgroup $\Lambda$ of $\Gamma$ such that $\intrk^{\Gamma}(\Lambda)>r$. Set $s=\intrk^{\Gamma}(\Lambda)$. There exists a subgroup $\Theta$  of $\Gamma$ with $\Lambda\leqslant \Theta\leqslant \Gamma$ and $\rk(\Theta)=s$. Fix a set of generators $\{\theta_1,\ldots,\theta_s\}$ of size $s$ of $\Theta$. Set
\[
\Lambda_r=\langle \theta_1,\ldots,\theta_r\rangle.
\]
We claim that
\begin{eqnarray}\label{eq:r}
\intrk^{\Gamma}(\Lambda_r)=r.
\end{eqnarray}
To prove this claim, suppose that it is not the case. Then, there exists a subgroup $\Theta_r$ of $\Gamma$ with $\Lambda_r\leqslant \Theta_r\leqslant \Gamma$ such that $\rk(\Theta_r)<r$. Set
\[
\overline{\Theta}=\langle \Theta_r \cup \{\theta_{r+1}.\ldots ,\theta_s\}\rangle.
\]
Then, we have $\Lambda\leqslant \Theta\leqslant \overline{\Theta}\leqslant \Gamma$ but
\[
\rk(\overline{\Theta})\leq \rk(\Theta_r)+(s-r)<s;
\]
this contradicts $\intrk^{\Gamma}(\Lambda)=s$. Therefore, we obtain \eqref{eq:r}.
\end{proof}

Now we are ready to prove Theorem~\ref{thm:wreath_shin}.

\begin{proof}[Proofs of Theorems~$\ref{thm:wreath_shin}$ and $\ref{thm:wreath_new}$]
First, we show Theorem~\ref{thm:wreath_shin}~(1). Set $r=\intrk^{\Gamma}(\Lambda)$; note that $r\leq k<\infty$. Take $\Theta$ with $\Lambda\leqslant \Theta\leqslant \Gamma$ that satisfies $\rk(\Theta)=r$. Then, by conditions (i), (ii), and (iii), we can apply Propositions~\ref{prop:frombelow} and \ref{prop:fromabove} to $x$. Indeed, since $\supp(x)$ is a zero-sum set with $\supp(x)\subseteq \supp(x)$ and $e_{\Gamma}\in \supp(x)$ by (i) and (ii), condition (iii) implies that
\[
\supp(x)\subseteq \langle\supp(x)\rangle =\Lambda.
\]
Then, these propositions provide the estimates
\[
\cl_{G,N}(x)\geq r\quad \textrm{and}\quad \cl_{G,N}(x)\leq r,
\]
respectively. Therefore, we obtain the conclusion of Theorem~\ref{thm:wreath_shin}~(1).

Next, we will prove Theorem~\ref{thm:wreath_shin}~(2). Let $r\in \NN$ satisfy $r\leq \genrk(\Gamma)$. By Lemma~\ref{lem:gen_rk}, there exists a finitely generated subgroup $\Lambda_r$ of $\Gamma$ such that $\intrk^{\Gamma}(\Lambda_r)=r$. Take a generating set $\{\lambda_1,\ldots ,\lambda_r\}$ of $\Lambda_r$ of size $r$. Set
\[
x_r=\sum_{i=1}^r \delta_{\lambda_i}-r\delta_{e_{\Gamma}}\in N.
\]
Then, this $x_r$ fulfills conditions (i)--(iii) of Theorem~\ref{thm:wreath_shin}~(1) with $\Lambda$ being $\Lambda_r=\langle \supp(x_r)\rangle$. In particular, Theorem~\ref{thm:wreath_shin}~(1) implies that
\[
\cl_{G,N}(x_r)=r.
\]
The rest is to show that
\begin{eqnarray}\label{eq:reversed_incl}
\{\cl_{G,N}(x)\;|\; x\in [G,N]\}\subseteq \{r\in \ZZ_{\geq 0}\;|\; r\leq \genrk(\Gamma)\}.
\end{eqnarray}
Take an arbitrary $x\in [G,N]$. Then $\supp(x)$ is finite and $\supp(x)$ is a zero-sum set for $x$. Define $\Lambda_x:=\langle \supp(x)\rangle$, which is a finitely generated subgroup of $\Gamma$. Then, Proposition~\ref{prop:fromabove} implies that $\cl_{G,N}(x)\leq \intrk^{\Gamma}(\Lambda_x)$. Hence, we obtain \eqref{eq:reversed_incl}. This completes the proof of Theorem~\ref{thm:wreath_shin}~(2).

Finally, Theorem~\ref{thm:wreath_new} immediately follows from Theorem~\ref{thm:wreath_shin}~(1).
\end{proof}

Theorem~\ref{thm:wreath_shin}~(2) (former assertion) can be generalized to the following setting. Here, recall from Subsection~\ref{subsec:perm} the definition of permutational wreath products.

\begin{prop}\label{prop:wreath_perm}
Let $\Gamma$ be a group. Let $Q$ be a group quotient of $\Gamma$ and $\sigma\colon \Gamma \twoheadrightarrow Q$ be a group quotient map. Let $\rho_Q\colon \Gamma\curvearrowright Q$ be the $\Gamma$-action defined by the composition of $\sigma$ and the natural $Q$-action $Q\curvearrowright Q$ by left multiplication. Set
\[
(G,N)=(\ZZ\wr_{\rho_Q}\Gamma,\bigoplus_{Q}\ZZ).
\]
Then we have
\[
\{\cl_{G,N}(x)\;|\;x\in [G,N]\}=\{r\in \ZZ_{\geq 0}\;|\; r\leq \genrk(Q)\}.
\]
\end{prop}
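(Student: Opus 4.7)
The plan is to reduce Proposition~\ref{prop:wreath_perm} to Theorem~\ref{thm:wreath_shin}~(2) applied to the group $Q$, by comparing $(G,N)$ with the standard (non-permutational) wreath product pair $(\widetilde{G}, \widetilde{N}) := (\ZZ \wr Q, \bigoplus_Q \ZZ)$. Define $\pi \colon G \to \widetilde{G}$ by $\pi(v,\gamma) = (v, \sigma(\gamma))$. Because $\rho_Q$ is by construction the natural left-regular $Q$-action on $\bigoplus_Q \ZZ$ precomposed with $\sigma$, a direct computation with the semidirect product laws shows that $\pi$ is a surjective group homomorphism whose restriction to $N$ is the natural isomorphism $(v, e_\Gamma) \mapsto (v, e_Q)$ onto $\widetilde{N}$.

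Since $N$ and $\widetilde{N}$ are normal in $G$ and $\widetilde{G}$ respectively, both $[G,N] \subseteq N$ and $[\widetilde{G}, \widetilde{N}] \subseteq \widetilde{N}$, so $\pi$ restricts to an injection $[G,N] \hookrightarrow \widetilde{N}$. The crucial step is to promote this to a length-preserving bijection $[G,N] \cong [\widetilde{G}, \widetilde{N}]$. The inequality $\cl_{\widetilde{G}, \widetilde{N}}(\pi(x)) \leq \cl_{G,N}(x)$ is immediate since $\pi$ sends mixed commutators to mixed commutators. For the opposite inequality, given any mixed commutator $[(v', q), (w', e_Q)]$ in $\widetilde{G}$, I would choose a lift $\gamma \in \Gamma$ of $q$ (which exists because $\sigma$ is surjective) and observe that $[(v', \gamma), (w', e_\Gamma)]$ is a mixed commutator in $G$ whose image under $\pi$ equals $[(v',q),(w',e_Q)]$, since the $\Gamma$-action factors through $\sigma$. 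Iterating this lift across a minimal expression of $\pi(x)$ as a product of mixed commutators yields an element of $[G,N]$ whose $\pi$-image is $\pi(x)$; injectivity of $\pi|_N$ forces this lifted element to coincide with $x$, giving $\cl_{G,N}(x) \leq \cl_{\widetilde{G}, \widetilde{N}}(\pi(x))$ and simultaneously $\pi([G,N]) = [\widetilde{G}, \widetilde{N}]$.

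Once the length-preserving isomorphism $[G,N] \cong [\widetilde{G}, \widetilde{N}]$ is established, applying Theorem~\ref{thm:wreath_shin}~(2) to the ordinary wreath product $\ZZ \wr Q$ yields
\[
\{\cl_{G,N}(x) \mid x \in [G,N]\} = \{\cl_{\widetilde{G}, \widetilde{N}}(y) \mid y \in [\widetilde{G}, \widetilde{N}]\} = \{r \in \ZZ_{\geq 0} \mid r \leq \genrk(Q)\},
\]
which is the desired formula. The main technical point I anticipate is the lifting argument in the second paragraph: one must verify that iterated lifts of mixed commutators stay inside $[G,N]$ and that their product recovers $x$ after projection. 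Both points reduce to $\pi|_N$ being an isomorphism and $[G,N] \subseteq N$, so the verification is routine but has to be carried out explicitly.
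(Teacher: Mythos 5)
Your proof is correct but follows a genuinely different strategy from the paper. The paper's proof of Proposition~\ref{prop:wreath_perm} is essentially a ``redo'' of Propositions~\ref{prop:frombelow} and~\ref{prop:fromabove} in the permutational setting: it observes that the commutator formula becomes $[(v,\gamma),(w,e_\Gamma)]=(\sigma(\gamma)w-w,e_\Gamma)$, that $\bigoplus_Q\ZZ$ carries commuting left and right $Q$-actions, and then asserts that the two bounding arguments go through verbatim. You instead construct a surjective homomorphism $\pi\colon G\to\widetilde{G}=\ZZ\wr Q$, $(v,\gamma)\mapsto(v,\sigma(\gamma))$, check that it restricts to an isomorphism $N\cong\widetilde{N}$ and maps mixed commutators onto mixed commutators, and use the lifting argument (together with $[G,N]\subseteq N$ and injectivity of $\pi|_N$) to conclude that $\pi$ induces a length-preserving bijection $[G,N]\cong[\widetilde{G},\widetilde{N}]$; then Theorem~\ref{thm:wreath_shin}~(2) applied to the ordinary wreath product $\ZZ\wr Q$ finishes. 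Your reduction has the advantage of invoking Theorem~\ref{thm:wreath_shin}~(2) as a black box rather than re-running its proof, and it in fact establishes slightly more than the proposition states (a length-preserving correspondence, not merely equality of the sets of values), which would also transport the Theorem~\ref{thm:wreath_shin}~(1) analogue. The paper's route is shorter on the page only because it relies on the reader to re-trace the earlier proofs with the substitution $\gamma\rightsquigarrow\sigma(\gamma)$; your argument makes the reduction explicit. Both are valid.
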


\begin{proof}
For $v\in N$ and $\gamma\in \Gamma$, write $(v,\gamma)$ to indicate the corresponding element of $G$. Note that $N$ adimits a narutal left $Q$-action. Then, similar computation to one in Lemma~\ref{lem:commutator} shows that for every $w,v\in N$ and every $\gamma\in \Gamma$,
\[
\big[ (v,\gamma), (w, e_{\Gamma})\big] = (\sigma(\gamma) w - w , e_{\Gamma}).
\]
Observe that $N$ admits the right $Q$-action $vq(p):=v(pq^{-1})$ for $v\in N$ and $p,q\in Q$, which commutes with the left $Q$-action. Therefore, the proofs of Propositions~\ref{prop:frombelow} and \ref{prop:fromabove} can be generalized to our setting.
\end{proof}

\subsection{Proof of Theorem~\ref{thm:split}}\label{subsec:split}
Here we prove Theorem~\ref{thm:split}. The proof goes along a similar way to the one in the proof of \cite[Theorem~7.1]{KKMM1}; we include the proof for the convenience of the reader. The following lemma is the key (see also \cite[Lemma~7.4]{KKMM1}).

\begin{lem}\label{lem:split}
Let $q\colon G\twoheadrightarrow \Gamma$ be a group quotient map and $N=\Ker(q)$. Assume that $N$ is \emph{abelian}.
Let $k\in \NN$. Let $f_1, \cdots, f_k, g_1, \cdots, g_k, a_1, \cdots, a_k, b_1, \cdots, b_k \in G$ with
\[
q(f_i) = q(a_i)
\quad \textrm{and}\quad
q(g_i) = q(b_i)
\]
for every $ 1\leq i \leq k$.
Then, $[f_1, g_1] \cdots [f_k, g_k] ([a_1, b_1] \cdots [a_k, b_k])^{-1}$ is contained in $[G,N]$. Moreover, the following inequality holds:
\[
\cl_{G,N}\Big( [f_1,g_1] \cdots [f_k,g_k] \big( [a_1, b_1] \cdots [a_k, b_k] \big)^{-1} \Big) \leq 2k.
\]
\end{lem}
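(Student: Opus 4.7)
The plan is to reduce to the single-index case $k=1$ via a telescoping identity, and then prove the single-index case by expanding each commutator using standard identities. Throughout I will use that because $N$ is normal in $G$, the set of $(G,N)$-commutators is closed under conjugation by $G$---since $h[g,n]h^{-1}=[hgh^{-1},hnh^{-1}]$ with $hnh^{-1}\in N$---and under inversion, via $[g,n]^{-1}=[ngn^{-1},n^{-1}]$. Consequently $\cl_{G,N}$ is $G$-conjugation-invariant on $[G,N]$.

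For the single-index case, since $q(f_i)=q(a_i)$ and $q(g_i)=q(b_i)$ there exist $n_i,m_i\in N$ with $f_i=a_in_i$ and $g_i=b_im_i$. I would apply the identities $[xy,z]=x[y,z]x^{-1}\cdot[x,z]$ and $[x,yz]=[x,y]\cdot y[x,z]y^{-1}$ to obtain
\[
[f_i,g_i]=a_i[n_i,b_im_i]a_i^{-1}\cdot [a_i,b_i]\cdot b_i[a_i,m_i]b_i^{-1}.
\]
The first and third factors are conjugates of $(G,N)$-commutators (noting that $[n_i,b_im_i]=[b_im_i,n_i]^{-1}$ is itself a $(G,N)$-commutator by the inversion remark), so they are $(G,N)$-commutators themselves. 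Rearranging,
\[
[f_i,g_i]\cdot [a_i,b_i]^{-1}=\bigl(a_i[n_i,b_im_i]a_i^{-1}\bigr)\cdot \bigl([a_i,b_i]\cdot b_i[a_i,m_i]b_i^{-1}\cdot [a_i,b_i]^{-1}\bigr)
\]
displays $[f_i,g_i][a_i,b_i]^{-1}$ as a product of two $(G,N)$-commutators; in particular it lies in $[G,N]$ with $\cl_{G,N}([f_i,g_i][a_i,b_i]^{-1})\leq 2$.

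For general $k$, I would set $F_j=[f_1,g_1]\cdots [f_j,g_j]$, $A_j=[a_1,b_1]\cdots [a_j,b_j]$, and $P_j=F_jA_j^{-1}$. A direct manipulation yields
\[
P_k={}^{F_{k-1}}\!\bigl([f_k,g_k][a_k,b_k]^{-1}\bigr)\cdot P_{k-1}.
\]
By the conjugation invariance of $\cl_{G,N}$ and the single-index bound, the first factor has $\cl_{G,N}\leq 2$, so $\cl_{G,N}(P_k)\leq 2+\cl_{G,N}(P_{k-1})$. Induction on $k$ starting from $P_0=e$ then delivers $\cl_{G,N}(P_k)\leq 2k$. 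The containment $P_k\in [G,N]$ is automatic since each contribution lies in $[G,N]$.

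The main technical step is the single-index identity producing exactly two $(G,N)$-commutators (rather than three, as a naive expansion might give); once this is in hand, the telescoping and induction are routine. Notably, the argument uses only the normality of $N$ in $G$, so the abelianness hypothesis---natural for the intended application to Theorem~\ref{thm:split}---plays no role in this particular estimate.
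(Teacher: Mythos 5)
Your proof is correct, and the single-index step takes a genuinely different — and in fact more flexible — route than the paper's. The paper writes $f = av_1$, $g = bv_2$ with $v_1, v_2 \in N$, expands $[f,g][a,b]^{-1}$ as a single word, and regroups it as $(ab)[b^{-1},v_1][v_2,a^{-1}](ab)^{-1}$; the regrouping cancels a factor $v_1 v_2 v_1^{-1} v_2^{-1}$, which is exactly where abelianness of $N$ is used. You instead peel off the $N$-factors one at a time via the standard identities $[xy,z]=x[y,z]x^{-1}\cdot[x,z]$ and $[x,yz]=[x,y]\cdot y[x,z]y^{-1}$, arriving (after collapsing the last three of your four factors into a single conjugate) at
\[
[f,g][a,b]^{-1}=\bigl(a[n,bm]a^{-1}\bigr)\cdot\bigl((aba^{-1})[a,m](aba^{-1})^{-1}\bigr),
\]
a product of two $G$-conjugates of $(G,N)$-commutators, with no commutativity assumption on $N$ anywhere. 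The induction step is only a cosmetic variant of the paper's: you conjugate the tail by the partial product $F_{k-1}$ of the $[f_i,g_i]$'s and left-multiply, while the paper conjugates by the partial product of the $[a_i,b_i]$'s and right-multiplies; both are valid. Your observation that the abelianness hypothesis is dispensable here is worth recording: the earlier bound $\cl_{G,N}(x)\le 3\cl_G(x)$ of \cite{KKMM1} assumes only that $(\star)$ splits, and your argument sharpens it to $2\cl_G(x)$ in that same generality.
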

\begin{proof}
We prove this lemma by induction on $k$. The case $k = 0$ is obvious. Suppose that $k = 1$, and set $f = f_1$, $g = g_1$, $a = a_1$, and $b = b_1$. Then, there exist $v_1, v_2 \in N$ such that $f = a v_1$ and $g = a v_2$. Then we have
\begin{eqnarray*}
[f,g][a, b]^{-1} & = & [a v_1, b v_2] [a, b]^{-1} \\
& = & a v_1 b v_2 v_1^{-1} a^{-1} v_2^{-1} b^{-1} b a b^{-1} a^{-1} \\
& = & a v_1 b v_2 v_1^{-1}  a^{-1} v_2^{-1} a b^{-1} a^{-1} \\
& = & a b (b^{-1} v_1 b v_1^{-1} v_1 v_2 v_1^{-1} v_2^{-1} v_2 a^{-1} v_2^{-1} a) b^{-1} a^{-1} \\
& = & (a b) [b^{-1}, v_1] [v_2, a^{-1}] (a b)^{-1}.
\end{eqnarray*}
Here, recall that $N$ is assumed to be abelian  and hence that $v_1v_2v_1^{-1}v_2^{-1}=e_G$.
Note that $\cl_{G,N}$ is $G$-invariant, meaning that
\[
\cl_{G,N}(x)=\cl_{G,N}(zxz^{-1})
\]
for every $z\in G$ and every $x\in N$: this is because
\[
z[g,v]z^{-1}=[zgz^{-1},zvz^{-1}]
\]
for every $z,g\in G$ and every $v\in N$.
Therefore, we  have $\cl_{G,N}([f,g] [a, b]^{-1}) \le 2$. Now, we proceed to the induction step. Suppose that $k \geq 2$, and set
\[
\xi = [a_1, b_1] \cdots [a_{k-1}, b_{k-1}].
\]
Then we have
\[
[f_1, g_1] \cdots [f_k, g_k] ([a_1, b_1] \cdots [a_k, b_k])^{-1} = ([f_1, g_1] \cdots [f_{k-1}, g_{k-1}] \xi^{-1}) \cdot (\xi [f_k, g_k] [a_k, b_k]^{-1} \xi^{-1}).
\]
By $G$-invariance of $\cl_{G,N}$ and the inductive hypothesis, we conclude that
\[
\cl_{G,N} \Big( [f_1, g_1] \cdots [f_k, g_k] ([a_1, b_1] \cdots [a_k, b_k])^{-1}\Big) \le 2(k-1)+2=2k.
\]
This completes the proof.
\end{proof}

\begin{proof}[Proof of Theorem~$\ref{thm:split}$]
In split exact sequence $(\star)$,
take a homomorphism $s\colon \Gamma\to G$ such that $q\circ s={\rm id}_{\Gamma}$.
Take an arbitrary element $x$ in $[G,N]$. Then, Lemma~\ref{lem:split} implies that
\[
\cl_{G}(x \cdot ((s\circ q)(x))^{-1})\leq 2\cl_{G,N}(x).
\]
Here, note that $s\circ q\colon G\to G$ is a group homomorphism. Observe that $(s\circ q)(x)=e_{G}$ since $[G,N]\leqslant N=\Ker(q)$. Therefore, we conclude that
\[
\cl_{G}(x)\leq 2\cl_{G,N}(x),
\]
as desired.
\end{proof}

\section{The case of abelian $\Gamma$}\label{sec:proof}
 The goal  of this section is to prove Theorems~\ref{thm local cyclicity} and \ref{thm:abelian}.
In this section, we focus on the case where $\Gamma$ is abelian.  First, we recall the definition of \emph{special rank} of groups in the sense of Malcev \cite{Mal}.

\begin{definition}[\cite{Mal}]\label{def:local_rank}
For a group $\Gamma$, the \emph{special rank} of $\Gamma$ is defined by
\[
\sperk(\Gamma)=\sup \{ \rk(\Lambda) \; | \; \textrm{$\Lambda$ is a finitely generated subgroup  of $\Gamma$} \}\  \in \ZZ_{\geq 0}\cup \{\infty\}.
\]
\end{definition}

For a group pair $(\Gamma,\Lambda)$ with $\Gamma\geqslant \Lambda$, computation of $\intrk^{\Gamma}(\Lambda)$ is easy if $\Gamma$ is abelian.

\begin{lem}\label{lem:abel}
Let $(\Gamma,\Lambda)$ be a pair of a group $\Gamma$ and its subgroup $\Lambda$. Assume that $\Gamma$ is abelian. Then we have that
\[
\intrk^{\Gamma}(\Lambda)= \rk(\Lambda).
\]
In particular, we have that
\[
\genrk(\Gamma)=\sperk(\Gamma).
\]
\end{lem}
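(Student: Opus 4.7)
The plan is to establish $\intrk^{\Gamma}(\Lambda) = \rk(\Lambda)$ by proving two inequalities, and then derive the statement about general and special ranks directly from the definitions. The upper bound $\intrk^{\Gamma}(\Lambda) \leq \rk(\Lambda)$ is immediate: taking $\Theta = \Lambda$ in the infimum defining $\intrk^{\Gamma}(\Lambda)$ yields the bound. The substantive content is the reverse inequality $\intrk^{\Gamma}(\Lambda) \geq \rk(\Lambda)$, which amounts to showing that whenever $\Lambda \leqslant \Theta \leqslant \Gamma$, we have $\rk(\Lambda) \leq \rk(\Theta)$.

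The key ingredient is the following purely abelian fact: if $A$ is an abelian group with $\rk(A) = n \in \NN$ and $B \leqslant A$, then $\rk(B) \leq n$. To see this, fix generators $a_1,\ldots,a_n$ of $A$ and consider the induced surjection $\pi \colon \ZZ^n \twoheadrightarrow A$. The preimage $\pi^{-1}(B)$ is a subgroup of $\ZZ^n$, hence free abelian of rank $m \leq n$ by the classical structure theorem for subgroups of free abelian groups. Choosing a basis of $\pi^{-1}(B)$ and pushing it forward by $\pi$ gives a generating set of $B$ of size $m$, so $\rk(B) \leq m \leq n$. Applied to $A = \Theta$ and $B = \Lambda$, this yields $\rk(\Lambda) \leq \rk(\Theta)$ whenever $\Theta$ has finite rank; if $\rk(\Theta) = \infty$, the inequality is vacuous. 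Taking the infimum over all $\Theta$ with $\Lambda \leqslant \Theta \leqslant \Gamma$ gives $\intrk^{\Gamma}(\Lambda) \geq \rk(\Lambda)$, completing the proof of the first identity.

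For the second identity, the definitions give
\[
\genrk(\Gamma) = \sup\{\intrk^{\Gamma}(\Lambda) \;|\; \Lambda \leqslant \Gamma \text{ finitely generated}\} = \sup\{\rk(\Lambda) \;|\; \Lambda \leqslant \Gamma \text{ finitely generated}\} = \sperk(\Gamma),
\]
where the middle equality uses the first part of the lemma.

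The only potentially subtle point is that the inequality $\rk(B) \leq \rk(A)$ for $B \leqslant A$ genuinely requires the abelian hypothesis (it fails for free groups, for instance), so one must highlight where commutativity enters; the rest of the argument is formal manipulation of the definitions.
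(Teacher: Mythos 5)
Your proof is correct and takes essentially the same route as the paper. The paper simply cites ``the classification of finitely generated abelian groups'' for the key inequality $\rk(\Lambda)\leq\rk(\Theta)$ when $\rk(\Theta)<\infty$, whereas you unpack this by lifting to a surjection $\ZZ^n\twoheadrightarrow\Theta$ and invoking the structure of subgroups of free abelian groups; these are the same idea at different levels of detail, and the rest of the argument (the trivial upper bound via $\Theta=\Lambda$, and the passage to $\genrk=\sperk$ by taking suprema) matches the paper.
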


\begin{proof}
The inequality $\intrk^{\Gamma}(\Lambda)\leq  \rk(\Lambda)$ holds in general. Take a group $\Theta$ with $\Lambda\leqslant \Theta\leqslant \Gamma$. If $\rk(\Theta)=\infty$, then $\rk(\Lambda)\leq \rk(\Theta)=\infty$. If $\rk(\Theta)<\infty$, then the classification of finitely generated abelian groups implies
\[
\rk(\Theta)\geq \rk(\Lambda).
\]
Hence, we conclude that $\rk(\Theta)\geq \rk(\Lambda)$. Therefore, we have the reversed inequality
\[
\intrk^{\Gamma}(\Lambda)\geq  \rk(\Lambda)
\]
as well. This ends our proof.
\end{proof}

See Subsection~\ref{subsec:gen_rk} for more details on general ranks and special ranks.

In what follows, we will prove Theorem~\ref{thm local cyclicity}, which states that $\cl_G$ and $\cl_{G,N}$ coincide on $[G,N]$ when $\Gamma$ is locally cyclic.
Our proof of Theorem~\ref{thm local cyclicity} employs the following lemma.

\begin{lem}\label{lem:commcomp}
Let $G$ be a group and let $g,h \in G$. Then the following hold:
\begin{enumerate}[$(1)$]
\item If $x \in G$ commutes with $g$, then $[g, hx] = [g,h]$. In particular, we have $[g,h] = [g, hg^k]$ for every $k \in \ZZ$.

\item If $y \in G$ commutes with $h$, then $[gy, h] = [g,h]$. In particular, we have $[g,h] = [gh^k,h]$ for every $k \in \ZZ$.
\end{enumerate}
\end{lem}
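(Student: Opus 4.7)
The plan is to verify both statements by direct computation, unwinding the definition of the commutator and commuting elements past each other using the given hypotheses. Since the two parts are symmetric, I will describe (1) in detail and then note how (2) follows the same template.

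For (1), I would first expand
\[
[g, hx] = g(hx)g^{-1}(hx)^{-1} = g h x g^{-1} x^{-1} h^{-1}.
\]
The hypothesis that $x$ commutes with $g$ gives $xg^{-1} = g^{-1}x$, and substituting this into the middle of the word yields
\[
g h (xg^{-1}) x^{-1} h^{-1} = g h g^{-1} (x x^{-1}) h^{-1} = g h g^{-1} h^{-1} = [g,h].
\]
The ``in particular'' clause then follows by taking $x = g^k$, which visibly commutes with $g$ for every $k\in\ZZ$.

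For (2), I would expand $[gy,h] = gyhy^{-1}g^{-1}h^{-1}$ and use $yh = hy$ to slide $y$ to the right so it cancels with $y^{-1}$, leaving $ghg^{-1}h^{-1} = [g,h]$; setting $y = h^k$ gives the particular case. The only ``obstacle'' here is purely notational bookkeeping of inverses — there is no conceptual difficulty, and no earlier results from the paper are needed.
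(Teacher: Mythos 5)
Your proof is correct and matches the paper's own proof, which performs exactly the same direct expansion of the commutators and cancellation using the commuting hypothesis. The only difference is that you spell out the intermediate substitution steps that the paper compresses into a single line.
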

\begin{proof}
Computation shows that
\[ [g,hx] = ghx g^{-1} x^{-1} h^{-1} = ghg^{-1} h^{-1} = [g,h],\]
\[ [gy, h] = gyh y^{-1} g^{-1} h^{-1} = ghg^{-1} h^{-1} = [g,h].\qedhere\]
\end{proof}

\begin{remark} \label{remark left right}
Using Lemma~\ref{lem:commcomp}, we have  for every $g\in G$ and every $x\in N$,
\[ [g,x] = [g, xg^{-1}] = [gxg^{-1}, x g^{-1}], \; [x,g] = [xg^{-1}, g] = [xg^{-1}, gxg^{-1}].\]
These mean that $c \in G$ is a $(G,N)$-commutator if and only if there exist $g \in G$ and $x \in N$ such that $c = [x,g]$.
\end{remark}

\begin{proof}[Proof of Theorem~$\ref{thm local cyclicity}$]
Let $g,h \in G$.
We show  that $[g,h]$ is a $(G,N)$-commutator.
Without loss of generality, we may assume that $(\bar{g},\bar{h})\neq (e_\Gamma,e_\Gamma)$.
Let  $q$ denote the quotient map $G \to \Gamma$. We write $\bar{x}$ instead of $q  (x)$ for $x \in G$. Since $\Gamma$ is locally cyclic  and $(\bar{g},\bar{h})\neq (e_\Gamma,e_\Gamma)$,  there exists an isomorphism $f$ from  $\langle \bar{g}, \bar{h} \rangle$ to $\ZZ$ or $\ZZ/k\ZZ$ for some $k > 0$.

We first show the case $\langle \bar{g}, \bar{h} \rangle$ is isomorphic to $\ZZ$. We now show the following claim:

\bigskip \noindent
{\bf Claim~1.} Assume that $g_0$ and $h_0$ are elements in $G$  such that $g_0, h_0 \in \langle g, h \rangle$ and $[g_0, h_0] = [g,h]$. If $\min \{ |f (\bar{g_0})|, |f(\bar{h}_0)| \} > 0$, then there exist $g_1 , h_1 \in G$ such that $g_1, h_1 \in \langle g, h \rangle$, $[g_1, h_1] = [g,h]$, and
\[ \min \{ |f(\bar{g}_0)|, |f(\bar{h}_0)|\} > \min \{ |f(\bar{g}_1)|, |f(\bar{h}_1)|\}. \]

Now we start the proof of Claim~1. Set $m = |f(\bar{g}_0)|$ and $n = |f(\bar{h}_0)|$. Suppose $m \ge n$. Then there exist $k,r \in \ZZ$ such that $m = nk + r$ and $|r| < |n|$.

Set $g_1 = g_0 h_0^{-k}$ and $h_ 1 = h_0$.  By Lemma \ref{lem:commcomp}, we have
\[ [g,h] = [g_0, h_0] = [g_0 h_0^{-k}, h_0] = [g_1, h_1].\]
Then $f(\bar{g}_1) = f(  q  (g_0 h_0^{-k})) = m - kn = r$. Thus we have
\[ \min \{ |f(\bar{g}_1)|, |f(\bar{h}_1)|\} = \min \{ |r|, |n|\} = |r| < |n| = \min \{ |m|, |n|\}.\]
The case $m \le n$ is  proved in a similar manner. This completes the proof of our claim.

Starting with the case $g = g_0$ and $h = h_0$, applying  Claim~1 iteratively, we obtain $g', h' \in G$ such that $[g,h] = [g', h']$ and one of $g'$ and $h'$ belongs to $N$. Hence  Remark~\ref{remark left right} completes the proof of the case $\langle \bar{g}, \bar{h} \rangle$.

Next we consider the case $f$ is an isomorphism from $\langle \bar{g}, \bar{h} \rangle$ to $\ZZ/k\ZZ$.
Let $\tilde{f}$ denote the composition $\langle \bar{g}, \bar{h} \rangle \to \ZZ / k \ZZ \xrightarrow{\cong} \{ 0,1, \cdots, k-1\}$. Here $\ZZ / k \ZZ \to \{ 0,1, \cdots, k-1\}$ is the inverse of the natural projection $\{ 0,1, \cdots, k-1\} \to \ZZ/k\ZZ$.
In a similar manner to Claim~1, we can show the following:

\bigskip \noindent
{\bf Claim~2.} Assume that $g_0$ and $h_0$ are elements in $G$ such that $g_0, h_0 \in \langle g, h \rangle$ and $[g_0, h_0] = [g,h]$. If $\min \{ \tilde{f}(\bar{g}_0), \tilde{f}(\bar{h}_0)\} > 0$, then there exist $g_1, h_1 \in G$ such that $g_1, h_1 \in \langle g, h \rangle$, $[g_1, h_1] = [g,h]$ and
\[ \min \{ \tilde{f}(\bar{g}_0), \tilde{f}(\bar{h}_0)\} > \min \{ \tilde{f}(\bar{g}_1), \tilde{f}(\bar{h}_1)\}.\]

Using Claim 2 iteratively, we can obtain elements $g',h' \in G$ such that $[g,h] = [g', h']$ and one of $g'$ and $h'$ belongs to $N$. Remark~\ref{remark left right} completes the proof.
\end{proof}

 Now we proceed to the proof of Theorem~\ref{thm:abelian}.

\begin{proof}[Proof of Theorem~$\ref{thm:abelian}$]
First we claim that
\[
\cl_{G}(x)=\left\lceil \frac{\cl_{G,N}(x)}{2}\right\rceil
\]
for every $x\in [G,N]$. Indeed, this equality follows from \eqref{eq mixed commutator_general} and \eqref{eq mixed commutator}. Therefore, Theorem~\ref{thm:wreath_shin}~(2) and Lemma~\ref{lem:abel} complete our proof.
\end{proof}

\begin{remark}
For a fixed prime number $p$, let $A = \bigoplus_{n \in \NN} ( \ZZ / p^n \ZZ )$. Then, in the setting of Theorem~\ref{thm:abelian}, we may replace $(G,N)=(\ZZ\wr \Gamma, \bigoplus_{\Gamma}\ZZ)$ with $(G,N) = (A \wr \Gamma, \bigoplus_{\Gamma}A)$.
This new pair $(G,N)$ satisfies the same conclusion as in Theorem~\ref{thm:abelian}. This provides an example with $N$ being \emph{locally finite}, meaning that every finitely generated subgroup of $N$ is finite.
\end{remark}

\begin{example}\label{example:wreath}
Set  $(G,N) = (\ZZ \wr \RR, \bigoplus_{\RR} \ZZ)$, or $(G,N) = (\ZZ \wr \overline{\QQ}^{\rm alg},\bigoplus_{\overline{\QQ}^{\rm alg}} \ZZ)$.
We note that $G$ is countable in the latter example.
Here $\overline{\QQ}^{\rm alg}$ denotes the algebraic closure of $\QQ$.
Then we claim that
\begin{eqnarray}\label{eq:cl}
\sup_{x\in [G,N]}\cl_{G}(x)=\infty \quad \textrm{and}\quad \sup_{x\in [G,N]}\cl_{G,N}(x)=\infty
\end{eqnarray}
and that
\begin{eqnarray}\label{eq:scl}
\scl_{G}\equiv 0 \ \textrm{on}\ [G,N] \quad \textrm{and}\quad \scl_{G,N}\equiv 0 \ \textrm{on}\ [G,N].
\end{eqnarray}
Indeed, Theorem~\ref{thm:abelian} implies \eqref{eq:cl} because $\sperk(\RR)=\sperk(\overline{\QQ}^{\rm alg})=\infty$. Lemma~\ref{lem:commutator}  implies that
\[
\big[ (nv,\gamma), (nw, e_{\Gamma})\big] = \big[ (v,\gamma), (w, e_{\Gamma})\big]^n\]
for every $v,w\in N$, every $\gamma\in \Gamma$ and every $n\in \NN$. From this together with commutativity of $N$, we can deduce that for every $x\in [G,N]$,
\[
\sup_{n\in \NN}\cl_{G}(x^n)\leq \sup_{n\in \NN}\cl_{G,N}(x^n)=\cl_{G,N}(x)<\infty.
\]
Therefore, we obtain \eqref{eq:scl}.

One remark is that we can deduce \eqref{eq:scl} from the Bavard duality theorem for $\scl_{G,N}$ (\cite[Theorem~1.2]{KKMM1}). Indeed, since $N$ is abelian, the Bavard duality theorem for $\scl_{G,N}$ implies that $\scl_{G,N}\equiv 0$ on $[G,N]$. Hence, for every $x\in [G,N]$, we have  $\scl_G(x)\leq \scl_{G,N}(x)=0$.
\end{example}

Here, we provide one application of Theorem~\ref{thm local cyclicity}, which is an improvement of our previous work in \cite{KKMM1}.

\begin{example}\label{ex:braid}
For $n \ge 2$, recall that the \emph{Artin braid group $B_n$ with $n$ strands} is defined to be the group generated by $n-1$ generators $\sigma_1, \cdots, \sigma_{n-1}$ with the following relations:
\[ \sigma_i \sigma_j = \sigma_j \sigma_i\]
for all $i, j \in \{1,2,\ldots ,n-1\}$ satisfying $|i - j| \ge 2$ and
\[ \sigma_i \sigma_{i+1} \sigma_i = \sigma_{i+1} \sigma_i \sigma_{i+1}\]
for every $1\leq i\leq n-2$. For the foundation of braid groups, see \cite{KTbook} for example. Set
\[
(G,N)=(B_n,[B_n,B_n]).
\]
$\Gamma=G/N$ is isomorphic to $\ZZ$, and hence $(\star)$ splits for this triple $(G,N,\Gamma)$. In particular, \eqref{eq:3bai} provides that $\cl_{G,N}(x)\leq 3\cl_{G}(x)$ for every $x\in [G,N]$: this estimate was obtained in \cite[Example 7.14]{KKMM1}. In fact, Theorem~\ref{thm local cyclicity} implies that the genuine equality
\[
\cl_{G,N}(x)=\cl_{G}(x)
\]
holds for every $x\in [G,N]$.
\end{example}

\section{The class $\CSD$}\label{sec:surface}
In this section, we define  a class $\CSD$ of groups and prove Theorem~\ref{thm:surface_new}. Recall that $\NN=\{1,2,3,\ldots\}$ in this paper.

\subsection{Definition of $\CSD$}\label{subsec:classes}
We first introduce the following notion.

\begin{definition}[$\pi_1(\Sigma_g)$-triple]\label{def:triple}
Let $g\in \NN$. Let $\Gamma$ be a group. A \emph{$\pi_1(\Sigma_g)$-triple for $\Gamma$} is defined to be a triple $(\phi,Q,\sigma)$ of a group homomorphism $\phi\colon \pi_1(\Sigma_g)\to \Gamma$, a group quotient $Q$ of $\Gamma$ and a group quotient map $\sigma\colon \Gamma\to Q$ such that $P:=(\sigma\circ \phi)(\pi_1(\Sigma_g))$ is abelian.
\end{definition}

\begin{remark}\label{rem:notinj}
In Definition~\ref{def:triple}, the homomorphism $\phi$ is \emph{not} required to be injective.
\end{remark}

We note that in Definition~\ref{def:triple},
\[
\intrk^Q(P)\leq \rk(P)\leq \rk(\pi_1(\Sigma_g))=2g.
\]

\begin{definition}[class $\CSD$]\label{def:CSD}
Let $D$ be a non-empty subset of
\[
\{(g,r)\in \NN^2\;|\; g+1\leq r\leq 2g\}.
\]
Then, the class $\CSD$ is defined as the class of all groups $\Gamma$ that satisfy the following condition: there exist a group quotient $Q$ of $\Gamma$ and a quotient map $\sigma\colon \Gamma\twoheadrightarrow Q$ such that for every $(g,r)\in D$, there exists $\phi_{(g,r)}\colon \pi_1(\Sigma_g) \to \Gamma$ such that $(\phi_{(g,r)},Q,\sigma)$ is a $\pi_1(\Sigma_g)$-triple for $\Gamma$ satisfying
\begin{eqnarray}\label{eq:ranksurf}
\intrk^{Q}(P_{(g,r)})=r,
\end{eqnarray}
where $P_{(g,r)}=(\sigma\circ \phi_{(g,r)})(\pi_1(\Sigma_g))$.
\end{definition}

By definition, we have
\begin{eqnarray}\label{eq:bigcap}
\CSD\subseteq \bigcap_{(g,r)\in D}\CC_{\Surf_{\{(g,r)\}}}.
\end{eqnarray}
The following proposition asserts that a weaker form of the reversed inclusion also holds.

\begin{prop}\label{prop:reversed}
Let $D\subseteq \{(g,r)\in \NN^2\;|\; g+1\leq r\leq 2g\}$ be non-empty.
Let $\Gamma \in \bigcap_{(g,r)\in D}\CC_{\Surf_{\{(g,r)\}}}$.
Then there exists $D_{\Gamma}\subseteq \{(g,t)\in \NN^2\;|\; g+1\leq t\leq 2g\}$ such that the following hold true:
\begin{itemize}
  \item $\Gamma \in \CC_{\Surf_{D_{\Gamma}}}$,
  \item for every $(g,r) \in D$, there exists $r'$ with $r \leq r' \leq 2g$ such that $(g,r') \in D_{\Gamma}$.
\end{itemize}
\end{prop}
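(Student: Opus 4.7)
The plan is to construct a single common quotient $Q$ of $\Gamma$ that simultaneously accommodates all the $\pi_1(\Sigma_g)$-triples supplied by the hypothesis, at the possible cost of enlarging intermediate ranks. For each $(g,r)\in D$, the assumption $\Gamma\in\mathcal{C}_{\mathrm{Surf}_{\{(g,r)\}}}$ supplies a $\pi_1(\Sigma_g)$-triple $(\phi_{(g,r)},Q_{(g,r)},\sigma_{(g,r)})$ with $\intrk^{Q_{(g,r)}}(P_{(g,r)})=r$, where $H_{(g,r)}:=\phi_{(g,r)}(\pi_1(\Sigma_g))$ and $P_{(g,r)}:=\sigma_{(g,r)}(H_{(g,r)})$. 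I would let $N_0$ be the normal subgroup of $\Gamma$ generated by $\bigcup_{(g,r)\in D}[H_{(g,r)},H_{(g,r)}]$, and set $Q:=\Gamma/N_0$ with $\sigma\colon\Gamma\twoheadrightarrow Q$ the natural projection.

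The key observation is that since each $P_{(g,r)}$ is abelian, $[H_{(g,r)},H_{(g,r)}]\subseteq\ker(\sigma_{(g,r)})$; normality of $\ker(\sigma_{(g,r)})$ then forces $N_0\subseteq\ker(\sigma_{(g,r)})$ for every $(g,r)\in D$. Consequently each $\sigma_{(g,r)}$ factors as $\sigma_{(g,r)}=\pi_{(g,r)}\circ\sigma$ for a canonical surjection $\pi_{(g,r)}\colon Q\twoheadrightarrow Q_{(g,r)}$, and, by the very definition of $N_0$, the image $P'_{(g,r)}:=\sigma(H_{(g,r)})$ is already abelian in $Q$. Hence $(\phi_{(g,r)},Q,\sigma)$ is a legitimate $\pi_1(\Sigma_g)$-triple for $\Gamma$ in the sense of Definition~\ref{def:triple}, and the witnesses $Q$ and $\sigma$ no longer depend on $(g,r)$.

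Next I plan to sandwich $r\leq\intrk^Q(P'_{(g,r)})\leq 2g$. For the upper bound, taking $\Theta=P'_{(g,r)}$ as the witnessing intermediate group gives $\intrk^Q(P'_{(g,r)})\leq\rk(P'_{(g,r)})\leq 2g$, because $P'_{(g,r)}$ is generated by the images of the $2g$ standard generators of $\pi_1(\Sigma_g)$. For the lower bound, for every subgroup $\Theta\leqslant Q$ with $P'_{(g,r)}\leqslant\Theta$, the image $\pi_{(g,r)}(\Theta)$ is a subgroup of $Q_{(g,r)}$ containing $P_{(g,r)}$, so $\rk(\Theta)\geq\rk(\pi_{(g,r)}(\Theta))\geq\intrk^{Q_{(g,r)}}(P_{(g,r)})=r$; taking the infimum over such $\Theta$ yields $\intrk^Q(P'_{(g,r)})\geq r$.

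To finish, I set $D_\Gamma:=\{(g,\intrk^Q(P'_{(g,r)})):(g,r)\in D\}$. The estimates above give $g+1\leq r\leq\intrk^Q(P'_{(g,r)})\leq 2g$, so $D_\Gamma$ lies in the prescribed region, and every $(g,r)\in D$ admits a companion $(g,r')\in D_\Gamma$ with $r\leq r'\leq 2g$. With the common witnesses $(Q,\sigma)$ and the homomorphisms $\phi_{(g,r)}$, this shows $\Gamma\in\mathcal{C}_{\mathrm{Surf}_{D_\Gamma}}$. I do not anticipate a genuine obstacle; the only point meriting care is the simultaneous compatibility of the normal closure $N_0$ with every $\ker(\sigma_{(g,r)})$, and this is immediate from normality of each kernel.
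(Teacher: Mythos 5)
Your proposal has a gap at the step where you claim $N_0 \subseteq \ker(\sigma_{(g,r)})$ for every $(g,r) \in D$. You define $N_0$ as the normal closure in $\Gamma$ of $\bigcup_{(g',r')\in D}[H_{(g',r')}, H_{(g',r')}]$. Since $\ker(\sigma_{(g,r)})$ is normal, the inclusion $N_0\subseteq\ker(\sigma_{(g,r)})$ holds if and only if $[H_{(g',r')}, H_{(g',r')}] \subseteq \ker(\sigma_{(g,r)})$ for \emph{all} $(g',r')\in D$. The hypothesis only tells you that $P_{(g,r)} = \sigma_{(g,r)}(H_{(g,r)})$ is abelian, i.e., $[H_{(g,r)}, H_{(g,r)}] \subseteq \ker(\sigma_{(g,r)})$ — the diagonal case $(g',r')=(g,r)$. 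For $(g',r') \neq (g,r)$, the quotient maps $\sigma_{(g',r')}$ and subgroups $H_{(g',r')}$ are chosen independently of $\sigma_{(g,r)}$, and there is no reason for $\sigma_{(g,r)}$ to kill $[H_{(g',r')}, H_{(g',r')}]$. Once this inclusion fails, the surjection $\pi_{(g,r)}\colon Q \twoheadrightarrow Q_{(g,r)}$ you rely on does not exist, and your lower bound $\intrk^Q(P'_{(g,r)}) \geq r$ loses its support.

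By contrast, the paper's proof takes $Q := \sigma_D(\Gamma)\leqslant\prod_{(g,r)\in D}Q_{(g,r)}$, which is $\Gamma/\bigcap_{(g,r)}\ker(\sigma_{(g,r)})$ and therefore surjects onto each $Q_{(g,r)}$ by construction; the lower bound $r\le r'$ then follows directly from Lemma~\ref{lem:quotient}. This is the dual trade-off to yours: it secures the surjections automatically, at the cost that the abelianness of $\sigma(H_{(g,r)})$ — which you handled by design — now requires the cross-term inclusions $[H_{(g,r)},H_{(g,r)}]\subseteq\ker(\sigma_{(g',r')})$ for all $(g',r')$. Either way, as written, your argument has an unjustified step at the inclusion $N_0 \subseteq \ker(\sigma_{(g,r)})$, and the "only point meriting care" that you flag at the end is exactly where the proof breaks down.
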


The following lemma is immediate by definition; it is employed in the proof of Proposition~\ref{prop:reversed}.
\begin{lem}\label{lem:quotient}
Let $(\Gamma,\Lambda)$ be a pair of a group $\Gamma$ and its subgroup $\Lambda$. Let $\tau\colon \Gamma\twoheadrightarrow \tau(\Gamma)$ be a group quotient map. Then we have
\[
\intrk^{\tau(\Gamma)}(\tau(\Lambda))\leq \intrk^{\Gamma}(\Lambda).
\]
\end{lem}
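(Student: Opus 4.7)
The plan is to prove the inequality directly from the definition of the intermediate rank by showing that any intermediate subgroup between $\Lambda$ and $\Gamma$ produces, via $\tau$, an intermediate subgroup between $\tau(\Lambda)$ and $\tau(\Gamma)$ whose rank is no larger.

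First, I would take an arbitrary subgroup $\Theta$ with $\Lambda \leqslant \Theta \leqslant \Gamma$. Applying $\tau$, we immediately get the chain $\tau(\Lambda) \leqslant \tau(\Theta) \leqslant \tau(\Gamma)$, so $\tau(\Theta)$ is an admissible subgroup for computing $\intrk^{\tau(\Gamma)}(\tau(\Lambda))$. The key elementary fact is that ranks do not increase under surjective homomorphisms: if $S \subseteq \Theta$ is a generating set for $\Theta$, then $\tau(S)$ generates $\tau(\Theta)$, so $\rk(\tau(\Theta)) \leq \#\tau(S) \leq \#S$, and taking infimum over generating sets gives $\rk(\tau(\Theta)) \leq \rk(\Theta)$ (this holds whether $\rk(\Theta)$ is finite or infinite).

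Combining these two observations yields
\[
\intrk^{\tau(\Gamma)}(\tau(\Lambda)) \;\leq\; \rk(\tau(\Theta)) \;\leq\; \rk(\Theta).
\]
Since $\Theta$ was arbitrary subject to $\Lambda \leqslant \Theta \leqslant \Gamma$, taking the infimum over all such $\Theta$ on the right-hand side gives the desired inequality
\[
\intrk^{\tau(\Gamma)}(\tau(\Lambda)) \;\leq\; \intrk^{\Gamma}(\Lambda).
\]

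There is no real obstacle here: the statement is a direct consequence of the definitions plus the elementary fact that rank is monotone under surjections. The proof is essentially a one-line diagram chase, which is consistent with the excerpt's remark that the lemma is immediate.
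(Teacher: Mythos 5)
Your proof is correct and is the natural argument the paper has in mind when it states the lemma is ``immediate by definition'': push an arbitrary intermediate subgroup $\Theta$ forward through $\tau$, note that rank is monotone under surjections since generating sets map to generating sets, and take the infimum over $\Theta$. No further comment is needed.
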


\begin{proof}[Proof of Proposition~$\ref{prop:reversed}$]
Let $(g,r)\in D$. Take a $\pi_1(\Sigma_g)$-triple $(\phi_{(g,r)},Q_{(g,r)},\sigma_{(g,r)})$ for $\Gamma$ as in the definition of $\CC_{\Surf_{\{(g,r)\}}}$. From $(\sigma_{(g,r)}\colon \Gamma\twoheadrightarrow Q_{(g,r)})_{{(g,r)}\in D}$, we will construct $(Q, \sigma)$ in the following manner. Define $Q_D$ and $\sigma_D\colon \Gamma\to Q_D$ by
\[
Q_D=\prod_{(g,r)\in D}Q_{(g,r)},\quad \sigma_D\colon \gamma\mapsto (\sigma_{(g,r)}(\gamma))_{(g,r)\in D}.
\]
Set $Q:=\sigma_D(\Gamma)$ and let $\sigma\colon \Gamma\twoheadrightarrow Q$ be the surjective map defined from $\sigma_D$.
Then, by Lemma~\ref{lem:quotient}, there exists $r' = r'_{(g,r)}$ with $r \leq r' \leq 2g$ such that $\intrk^Q((\sigma \circ \phi_{(g,r)})(\pi_1(\Sigma_g))) = r'$.
Finally, set
\[
  D_{\Gamma} = \{ (g, r'_{(g,r)})\;|\; (g,r) \in D \}.
\]
We conclude that $\Gamma \in \CC_{\Surf_{D_{\Gamma}}}$.
\end{proof}

We will see examples of elements in $\CSD$ in Section~\ref{sec:surface_example}. Here, we exhibit the most basic example.
\begin{example}\label{example:surface_basic}
Let $g\in \NN$. Then we have
\[
\pi_1(\Sigma_g)\in \CC_{\Surf_{\{(g,2g)\}}}.
\]
Indeed, consider the abelianization map
\[
\Ab_{\pi_1(\Sigma_g)}\colon \pi_1(\Sigma_g)\twoheadrightarrow \pi_1(\Sigma_g)^{\ab}\simeq \ZZ^{2g}.
\]
Then $(\phi,Q,\sigma)=(\mathrm{id}_{\pi_1(\Sigma_g)},\ZZ^{2g},\Ab_{\pi_1(\Sigma_g)})$ is a $\pi_1(\Sigma_g)$-triple with $\intrk^{Q}(P)=2g$, where $P=(\sigma\circ \phi)(\pi_1(\Sigma_g))$.
\end{example}

\subsection{Proof of Theorem~\ref{thm:surface_new}}\label{subsec:proof_surface_new}
In this subsection, we prove Theorem~\ref{thm:surface_new}. The following is the key proposition. Here for a permutational wreath product $H\wr_{\rho}\Gamma$ associated with the action $\rho\colon \Gamma \curvearrowright X$, we regard an element $u\in \bigoplus_{X}H$ as a map $u\colon X\to H$ such that $u(p)=e_H$ for all but finite $p\in X$.

\begin{prop}[key proposition]\label{prop:surface_key}
Let $g\in \NN$. Let $\Gamma$ be a group and let $(\phi,Q,\sigma)$ be a $\pi_1(\Sigma_g)$-triple for $\Gamma$. Set $P=(\sigma\circ \phi)(\pi_1(\Sigma_g))$. Let $\rho_Q\colon \Gamma\curvearrowright Q$ be the $\Gamma$-action given by the composition of $\sigma\colon \Gamma\twoheadrightarrow Q$ and the action $Q\curvearrowright Q$ by multiplication. Set
\[
(G,N)=(\ZZ\wr_{\rho_Q}\Gamma, \bigoplus_{Q}\ZZ).
\]
Assume that $x\in N$ fulfills the following three conditions.
\begin{enumerate}[$(i)$]
  \item $e_{Q}\in \supp(x)$.
  \item $P$ is a zero-sum set for $x$.
  \item For every zero-sum set $S$ for $x$ satisfying $S\subseteq \supp(x)$ and $e_Q\in S$, we have $\langle S\rangle=P$.
\end{enumerate}
Then, $x\in [G,N]$ and
\[
\left\lceil \frac{\intrk^Q(P)}{2}\right\rceil\leq \cl_G(x)\leq g \quad \textrm{and} \quad \cl_{G,N}(x)=\intrk^Q(P)
\]
hold true.
\end{prop}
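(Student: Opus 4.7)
The plan is to mirror Theorem~\ref{thm:wreath_shin}~(1) in the permutational-wreath-product setting (as in Proposition~\ref{prop:wreath_perm}) for the identity $\cl_{G,N}(x)=\intrk^Q(P)$, then apply Theorem~\ref{thm:split} for the lower bound on $\cl_G(x)$, and finally build $g$ explicit commutators from the surface-group presentation of $\pi_1(\Sigma_g)$ for the upper bound $\cl_G(x)\le g$.

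First I would derive the commutator formula in $G=\ZZ\wr_{\rho_Q}\Gamma$, namely the analogue of Lemma~\ref{lem:commutator} with the $\Gamma$-action on $N$ factored through $\sigma$, and then repeat the proofs of Propositions~\ref{prop:frombelow} and \ref{prop:fromabove} verbatim with $(\Gamma,\Lambda)$ replaced by $(Q,P)$ and $\gamma w-w$ by $\sigma(\gamma)w-w$; the orbit argument of Lemma~\ref{lem:claim} and the word-length induction both carry over unchanged. Applied to $x$ under (i)--(iii), exactly as in the proof of Theorem~\ref{thm:wreath_shin}~(1), these adapted propositions give $\supp(x)\subseteq P$ and $\cl_{G,N}(x)=\intrk^Q(P)$; in particular $x\in[G,N]$. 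The lower bound $\cl_G(x)\ge\lceil\intrk^Q(P)/2\rceil$ is immediate from Theorem~\ref{thm:split}, since the wreath-product extension splits and $N$ is abelian, so $\cl_{G,N}(x)\le 2\cl_G(x)$ and $\cl_G$ is integer-valued.

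For the upper bound $\cl_G(x)\le g$, I would use the standard presentation $\pi_1(\Sigma_g)=\langle a_1,b_1,\ldots,a_g,b_g\mid\prod_i[a_i,b_i]\rangle$ and set $\tilde a_i=\phi(a_i)$, $\tilde b_i=\phi(b_i)$. Form the commutators $c_i=[(v_i,\tilde a_i),(w_i,\tilde b_i)]$ in $G$ for arbitrary $v_i,w_i\in N$. Since $\sigma(\tilde a_i),\sigma(\tilde b_i)\in P$ and $P$ is abelian, the commutator formula collapses to $c_i=(\sigma(\tilde a_i)w_i-w_i+v_i-\sigma(\tilde b_i)v_i,\,[\tilde a_i,\tilde b_i])$. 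In the product $c_1\cdots c_g$ the $\Gamma$-component is $\phi(\prod_i[a_i,b_i])=e_\Gamma$, and each partial $\Gamma$-product lies in $\Ker(\sigma)$ so acts trivially on $N$; hence the $N$-component is $\sum_{i=1}^g(\sigma(\tilde a_i)w_i-w_i)+\sum_{i=1}^g(\sigma(\tilde b_i)v'_i-v'_i)$ with $v'_i=-v_i$. This is an arbitrary element of the subgroup $M\subseteq N$ generated by $\{\sigma(\tilde a_i)n-n,\,\sigma(\tilde b_i)n-n:1\le i\le g,\,n\in N\}$. Since $\{a_i,b_i\}_i$ generates $\pi_1(\Sigma_g)$, the $2g$ elements $\{\sigma(\tilde a_i),\sigma(\tilde b_i)\}_i$ generate $P$; running the induction from the permutational Proposition~\ref{prop:fromabove} with this (non-minimal) generating set, together with $\supp(x)\subseteq P$ and (ii), expresses $x$ as such an element of $M$, giving $\cl_G(x)\le g$.

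The main obstacle is the clean collapse of $c_1\cdots c_g$: a priori the iterated multiplication conjugates the $N$-summands $u_i$ by the running partial product of the $[\tilde a_j,\tilde b_j]$, and only the vanishing $\sigma([\tilde a_j,\tilde b_j])=e_Q$---i.e.\ the abelianness of $P$ built into Definition~\ref{def:triple}---kills those conjugations and produces a clean sum of $2g$ principal derivations in $N$. Everything else is bookkeeping and direct invocation of the earlier propositions.
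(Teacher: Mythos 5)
Your proposal is correct and follows essentially the same route as the paper: derive the commutator formula for $\ZZ\wr_{\rho_Q}\Gamma$, transport Propositions~\ref{prop:frombelow} and \ref{prop:fromabove} (via Proposition~\ref{prop:wreath_perm}) to get $\cl_{G,N}(x)=\intrk^Q(P)$, invoke Theorem~\ref{thm:split} for the lower bound on $\cl_G$, and use the surface presentation together with the abelianness of $P$ (so that each $[\phi(\alpha_i),\phi(\beta_i)]$ lies in $\Ker\sigma$, killing the conjugations) to write $x$ as a product of $g$ genuine commutators, giving $\cl_G(x)\le g$. The only cosmetic difference is that the paper absorbs the sign directly by taking $\xi_i=[(-v_i,\phi(\alpha_i)),(w_i,\phi(\beta_i))]$ where you introduce $v_i'=-v_i$ after the fact; the substance is identical.
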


\begin{proof}
Recall the proof of Proposition~\ref{prop:wreath_perm}; in particular, we have  for every $v,w\in N$ and every $\gamma\in \Gamma$,
\begin{eqnarray}\label{eq:comm}
\big[ (v,\gamma), (w, e_{\Gamma})\big] = (\sigma(\gamma) w - w , e_{\Gamma}).
\end{eqnarray}
The proof of Proposition~\ref{prop:wreath_perm} shows that
\begin{eqnarray}\label{eq:int_rank_surface}
\cl_{G,N}(x)=\intrk^Q(P).
\end{eqnarray}

In what follows, we will prove that $\cl_G(x)\leq g$.
In general, we have  for every $v,w\in N$ and every $\gamma,\lambda\in \Gamma$,
\[
\big[ (v,\gamma), (w, \lambda)\big] = (\sigma(\gamma) w -\sigma([\gamma, \lambda]) w +v-\sigma(\gamma \lambda\gamma^{-1}) v, [\gamma,\lambda]).
\]
In particular,
if $\sigma(\gamma)$ commutes with $\sigma(\lambda)$, then we have
\begin{eqnarray}\label{eq:comm_comm}
\big[ (v,\gamma), (w, \lambda)\big] = (\sigma(\gamma) w - w +v-\sigma(\lambda) v, [\gamma,\lambda]);
\end{eqnarray}
Now fix a system of standard generators $(\alpha_1, \beta_1, \cdots, \alpha_g, \beta_g)$ of $\pi_1(\Sigma_g)$, meaning that,
\begin{eqnarray}\label{eq:surface_presentation}
\pi_1(\Sigma_g)=\langle \alpha_1,\beta_1,\ldots,\alpha_g,\beta_g\;|\; [\alpha_1,\beta_1]\cdots [\alpha_g,\beta_g]=e_{\pi_1(\Sigma_g)}\rangle .
\end{eqnarray}
For every $1\leq i\leq g$, set $a_i=(\sigma\circ\phi)(\alpha_i)$ and $b_i=(\sigma\circ\phi)(\beta_i)$. Then, a similar argument to the proof of Proposition~\ref{prop:fromabove} verifies the following: there exist $w_1,\ldots,w_g\in N$ and $v_1,\ldots,v_{g}\in N$ such that
\begin{eqnarray}\label{eq:surface}
x=\sum_{i=1}^g\left\{(a_iw_i-w_i)+(b_iv_i-v_i)\right\}.
\end{eqnarray}
For these $w_1,\ldots ,w_g$ and $v_1,\ldots,v_g$, set
\[
\xi_i=\big[ (-v_i,\phi(\alpha_i)), (w_i, \phi(\beta_i))\big]\ \in [G,G]
\]
for every $1\leq i\leq g$. Then, \eqref{eq:comm_comm}, \eqref{eq:surface_presentation} and \eqref{eq:surface} imply that
\[
x=\xi_1\xi_2\cdots \xi_g.
\]
Here, we observe that for every $1\leq i\leq g$, $(\sigma\circ\phi)([\alpha_i,\beta_i])=e_Q$. (Recall from the definition of $\pi_1(\Sigma_g)$-triples that $P$ is abelian.) Hence, we obtain
\begin{eqnarray}\label{eq:leq_surface}
\cl_{G}(x)\leq g,
\end{eqnarray}
as desired.
By combining \eqref{eq:int_rank_surface} and \eqref{eq:leq_surface} with Theorem~\ref{thm:split}, we obtain the conclusion.
\end{proof}

\begin{proof}[Proof of Theorem~$\ref{thm:surface_new}$]
Let $(Q,\sigma)$ be a pair that is guaranteed in the definition of $\CSD$. Fix $(g,r)\in D$. Then, there exists $\phi_{(g,r)}\colon \pi_1(\Sigma_g)\to\Gamma$ such that $(\phi_{(g,r)},Q,\sigma)$ is a $\pi_1(\Sigma_g)$-triple for $\Gamma$ satisfying \eqref{eq:ranksurf}. Take an arbitrary $x=x_{(g,r)}$  that fulfills conditions (i), (ii) and (iii) in Proposition~\ref{prop:surface_key} with respect to $\phi=\phi_{(g,r)}$. Then by Proposition~\ref{prop:surface_key}, we have
\[
\left\lceil \frac{r}{2}\right\rceil\leq \cl_G(x_{(g,r)})\leq g\quad \textrm{and}\quad \cl_{G,N}(x_{(g,r)})=r,
\]
as desired.
\end{proof}

\begin{remark}
In the setting of Theorem~\ref{thm:surface_new}, assume that $\sup\{r\,|\,(g,r)\in D\}=\infty$. Then a similar argument to one in Example~\ref{example:wreath} shows \eqref{eq:cl} and \eqref{eq:scl}.
\end{remark}

\section{Members of $\CSD$}\label{sec:surface_example}
In this section, we exhibit examples of groups in $\CSD$ and prove Theorem~\ref{thm:ex_CSD}. For a group $H$, let $H^{\ab}:=H/[H,H]$ be the abelianization of $H$, and $\Ab_H\colon H\twoheadrightarrow H^{\ab}$ be the abelianization map. Set $\NN_{\geq 2}:=\{n\in \NN\;|\; n\geq 2\}$.
\subsection{Basic examples}\label{subsec:ex}
We start from basic examples.

\begin{example}\label{example:surface}
For $g\in \NN$, take an arbitrary group quotient $\Lambda$ of $\pi_1(\Sigma_g)$ satisfying $\rk(\Lambda^{\mathrm{ab}})=r$ with $g+1\leq r\leq 2g$. Then $\Lambda\in \CC_{\Surf_{\{(g,r)\}}}$. To see this, take a quotient map $\phi\colon \pi_1(\Sigma_g)\twoheadrightarrow \Lambda$. Then, $(\phi_{(g,r)},Q,\sigma)=(\phi, \Lambda^{\mathrm{ab}}, \mathrm{Ab}_{\Lambda})$ is a $\pi_1(\Sigma_g)$-triple for $\Lambda$ that satisfies \eqref{eq:ranksurf}.
\end{example}

\begin{lem}\label{lem:freefinite}
Let $g\in \NN$. Let $(A_i)_{i=1}^g$ be a family of abelian groups of special rank at least $2$. Then the free product $\Gamma=\bigast_{i=1}^g A_i$ belongs to $\CC_{\Surf_{\{(g,2g)\}}}$.
\end{lem}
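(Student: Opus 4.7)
The plan is to exhibit an explicit $\pi_1(\Sigma_g)$-triple $(\phi, Q, \sigma)$ for $\Gamma$ witnessing $\intrk^Q(P)=2g$, modelled on Example~\ref{example:surface_basic}. For each $i$, I use the hypothesis $\sperk(A_i)\geq 2$ to pick $x_i, y_i\in A_i$ with $B_i:=\langle x_i, y_i\rangle$ of rank $2$. Then I define $\phi\colon \pi_1(\Sigma_g)\to \Gamma$ on the standard surface generators by $\phi(\alpha_i)=x_i$ and $\phi(\beta_i)=y_i$. Since each $A_i$ is abelian, $[x_i,y_i]=e$, so the defining relation $\prod_i[\alpha_i,\beta_i]=e$ of $\pi_1(\Sigma_g)$ is sent to $e_\Gamma$ and $\phi$ is well defined.

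Next, set $Q:=\Gamma^{\ab}=\bigoplus_i A_i$ with $\sigma:=\Ab_\Gamma$. The image $P:=\sigma(\phi(\pi_1(\Sigma_g)))=\bigoplus_i B_i$ is abelian, so $(\phi,Q,\sigma)$ is a $\pi_1(\Sigma_g)$-triple. Lemma~\ref{lem:abel} (valid since $Q$ is abelian) gives
\[
\intrk^Q(P)=\rk(P)=\rk\Bigl(\bigoplus_i B_i\Bigr).
\]
The proof then reduces to verifying $\rk(\bigoplus_i B_i)=2g$; together with $\rk(P)\leq 2g$ (as $P$ is a quotient of $\pi_1(\Sigma_g)^{\ab}=\ZZ^{2g}$), this yields the required equality $\intrk^Q(P)=2g$.

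The main obstacle is the rank equality $\rk(\bigoplus_i B_i)=2g$. The inequality $\leq$ is automatic from $\rk(B_i)=2$, but the reverse inequality is delicate: direct sums of rank-$2$ finitely generated abelian groups can collapse in rank -- for instance $(\ZZ/2)^2\oplus(\ZZ/3)^2\cong(\ZZ/6)^2$ has rank $2$, not $4$. The expected resolution is to exploit the freedom in $\sperk(A_i)\geq 2$ to pick the $x_i, y_i$ in a coordinated way across all $i$, so that a single common witness -- either a common prime $p$ for which every $B_i$ admits a $(\ZZ/p)^2$ subquotient, or a common free $\ZZ^2$ contribution -- makes the direct sum's rank add to $2g$. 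When no single such witness is uniformly available, one passes from $\Gamma^{\ab}$ to an appropriate further quotient of $\Gamma$ isolating a rank-$2g$ abelian image; ensuring this alignment under only the hypothesis $\sperk(A_i)\geq 2$ is the technical heart of the argument.
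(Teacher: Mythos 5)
Your plan reproduces the paper's own argument step for step: pick rank-$2$ subgroups $M_i \leq A_i$ (your $B_i$), map $\pi_1(\Sigma_g) \twoheadrightarrow \bigast_{i=1}^g M_i \hookrightarrow \Gamma$ via the standard surface generators, take $(Q,\sigma) = (\Gamma^{\ab}, \Ab_\Gamma)$, and reduce via Lemma~\ref{lem:abel} to the assertion $\rk\bigl(\bigoplus_i M_i\bigr) = 2g$. You have also correctly identified the delicate point: the paper asserts \eqref{eq:ranksurf} with $r=2g$ without verifying $\rk\bigl(\bigoplus_i M_i\bigr) = 2g$, and, as your example $(\ZZ/2)^2 \oplus (\ZZ/3)^2 \cong (\ZZ/6)^2$ shows, this rank can collapse when the torsion invariants of the $M_i$ are coprime. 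So you have put your finger on a genuine gap which the paper's terse proof shares.

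The repair you sketch, however, is not available under the stated hypothesis alone. Take $g=2$, $A_1 = \ZZ\oplus\ZZ/2\ZZ$, $A_2 = \ZZ\oplus\ZZ/3\ZZ$: then $\sperk(A_i)=2$, every rank-$2$ finitely generated subgroup of $A_1$ is isomorphic to $\ZZ\oplus\ZZ/2\ZZ$ and of $A_2$ to $\ZZ\oplus\ZZ/3\ZZ$, so there is no common prime, no copy of $\ZZ^2$, and $\rk(M_1\oplus M_2) = \rk(\ZZ^2\oplus\ZZ/6\ZZ) = 3 < 4$ for every admissible choice. Worse, $\sperk(\Gamma^{\ab}) = \sperk(\ZZ^2\oplus\ZZ/6\ZZ)=3$, so \emph{no} choice of $\phi$ together with $(Q,\sigma)=(\Gamma^{\ab},\Ab_\Gamma)$ can yield $\intrk^Q(P)=4$; one would be forced onto a non-abelian quotient $Q$, and it is far from clear this is achievable. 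So ``ensuring this alignment'' is not merely the technical heart of the argument; it appears to be an actual obstruction that neither your proposal nor the paper's proof addresses. (The construction does succeed in special cases: when all $A_i$ share a common prime $p$ with $p$-rank $\geq 2$, when all $A_i$ contain $\ZZ^2$, or when some single $A_j$ has $\sperk(A_j)\geq 2g$ so that the whole surface group can be mapped into it -- but these go beyond the hypothesis $\sperk(A_i)\geq 2$.)
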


\begin{proof}
For every $1\leq i\leq g$, since $\sperk(A_i)\geq 2$, we can take a subgroup $M_i$ of $A_i$ with $\rk(M_i)=2$.
Then, the free product $\bigast_{i=1}^{g}M_i$ can be seen as a group quotient of $\pi_1(\Sigma_g)$. Here, recall presentation \eqref{eq:surface_presentation}. Let $\phi_0\colon \pi_1(\Sigma_g)\twoheadrightarrow \bigast_{i=1}^gM_i$ be the quotient map and $\iota\colon \bigast_{i=1}^gM_i\hookrightarrow \Gamma$ be the natural embedding. Then, the triple $(\phi_{(g,2g)},Q,\sigma)=(\iota\circ \phi_0, \bigoplus_{i=1}^gA_i,\Ab_{\Gamma})$ is a $\pi_1(\Sigma_g)$-triple for $\Gamma$ that satisfies \eqref{eq:ranksurf} with $r=2g$.
\end{proof}

\begin{lem}\label{lem=product}
Let $D$ be a non-empty subset of  $\{(g,r)\in \NN^2\;|\; g+1\leq r\leq 2g\}$. Let $\Lambda_{(g,r)}\in \mathcal{C}_{\mathrm{Surf}_{\{(g,r)\}}}$ for every $(g,r)\in D$. Then, three groups $\bigast_{(g,r)\in D}\Lambda_{(g,r)}$, $\bigoplus_{(g,r)\in D}\Lambda_{(g,r)}$ and $\prod_{(g,r)\in D}\Lambda_{(g,r)}$ are members of $\CSD$.
\end{lem}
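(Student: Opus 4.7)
The plan is to exhibit, for each of the three constructions, a common pair $(Q,\sigma)$ and, for each $(g,r)\in D$, a $\pi_1(\Sigma_g)$-triple whose associated abelian image has intermediate rank exactly $r$ in $Q$. For each $(g,r)\in D$, let $(\phi_{(g,r)},Q_{(g,r)},\sigma_{(g,r)})$ be a $\pi_1(\Sigma_g)$-triple for $\Lambda_{(g,r)}$ witnessing $\Lambda_{(g,r)}\in \CC_{\Surf_{\{(g,r)\}}}$, and set $P_{(g,r)}=(\sigma_{(g,r)}\circ \phi_{(g,r)})(\pi_1(\Sigma_g))$, so that $P_{(g,r)}$ is abelian and $\intrk^{Q_{(g,r)}}(P_{(g,r)})=r$.

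When $\Gamma=\prod_{(g,r)\in D}\Lambda_{(g,r)}$, I would take $Q=\prod_{(g,r)\in D}Q_{(g,r)}$ with $\sigma$ the coordinatewise product of the $\sigma_{(g,r)}$; when $\Gamma=\bigoplus_{(g,r)\in D}\Lambda_{(g,r)}$, I would take $Q=\bigoplus_{(g,r)\in D}Q_{(g,r)}$ with the analogous $\sigma$; and when $\Gamma=\bigast_{(g,r)\in D}\Lambda_{(g,r)}$, I would again take $Q=\bigoplus_{(g,r)\in D}Q_{(g,r)}$ and define $\sigma$ via the universal property of the free product (the images of distinct factors land in different summands of the direct sum and therefore commute, so these assemble to a well-defined surjection). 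In all three cases, for $(g,r)\in D$, let $\iota_{(g,r)}\colon \Lambda_{(g,r)}\hookrightarrow \Gamma$ be the canonical inclusion and set $\tilde\phi_{(g,r)}:=\iota_{(g,r)}\circ \phi_{(g,r)}$. Then $(\sigma\circ \tilde\phi_{(g,r)})(\pi_1(\Sigma_g))$ is exactly $P_{(g,r)}$ sitting inside the $(g,r)$-summand (resp.\ factor) of $Q$; in particular it is abelian, so $(\tilde\phi_{(g,r)},Q,\sigma)$ is a $\pi_1(\Sigma_g)$-triple for $\Gamma$.

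It remains to verify that $\intrk^Q(P_{(g,r)})=r$. The key sub-claim is the following: if $Q$ is a direct sum or direct product of groups $(Q_i)_{i\in I}$ and $P$ sits inside a single summand/factor $Q_{i_0}$, then $\intrk^Q(P)=\intrk^{Q_{i_0}}(P)$. The inequality $\intrk^Q(P)\leq \intrk^{Q_{i_0}}(P)$ is immediate, since any $\Theta$ with $P\leqslant \Theta\leqslant Q_{i_0}$ is also sandwiched between $P$ and $Q$. The reverse inequality follows from Lemma~\ref{lem:quotient} applied to the coordinate projection $Q\twoheadrightarrow Q_{i_0}$, which is a quotient map restricting to the identity on $P$. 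Applying the sub-claim with $i_0=(g,r)$ yields $\intrk^Q(P_{(g,r)})=\intrk^{Q_{(g,r)}}(P_{(g,r)})=r$, establishing $\Gamma\in \CSD$ in each of the three cases.

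The only real subtlety lies in the free product case: one must verify that the prescribed assignment on each factor $\Lambda_{(g,r)}$ does extend to a homomorphism out of $\bigast_{(g,r)\in D}\Lambda_{(g,r)}$ onto the abelian-in-each-coordinate group $\bigoplus_{(g,r)\in D}Q_{(g,r)}$, which is where commutativity of distinct summands in the direct sum is used crucially. The direct sum and direct product cases are essentially routine given the sub-claim above.
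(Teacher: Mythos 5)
Your proof is correct, but it takes a genuinely different and in fact more robust route than the paper's. The paper takes the single universal quotient $Q=\Gamma^{\ab}$ with $\sigma=\Ab_{\Gamma}$ for all three constructions, and simply asserts that $(\iota_{(g,r)}\circ\phi_{(g,r)},\Gamma^{\ab},\Ab_{\Gamma})$ satisfies \eqref{eq:ranksurf}. You instead take $Q$ to be $\prod_{(g,r)\in D}Q_{(g,r)}$ (resp.\ $\bigoplus_{(g,r)\in D}Q_{(g,r)}$) with $\sigma$ assembled coordinatewise (via the universal property of the free product in the third case), and reduce the rank computation to your sub-claim that $\intrk^{Q}(P)=\intrk^{Q_{i_0}}(P)$ when $P$ sits in a single factor $Q_{i_0}$, which you prove cleanly from Lemma~\ref{lem:quotient} applied to the coordinate projection together with the trivial inclusion of extension problems. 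This buys you something real: your argument works verbatim for an \emph{arbitrary} witnessing triple $(\phi_{(g,r)},Q_{(g,r)},\sigma_{(g,r)})$, whereas the paper's choice $Q=\Gamma^{\ab}$ only tracks the abelianized image $\Ab_{\Lambda_{(g,r)}}(\phi_{(g,r)}(\pi_1(\Sigma_g)))$, whose rank need not match $\intrk^{Q_{(g,r)}}(P_{(g,r)})$ when $\sigma_{(g,r)}$ is not (up to the relevant rank data) the abelianization map of $\Lambda_{(g,r)}$ — for a perfect $\Lambda_{(g,r)}$ the abelianized image would even be trivial. In the paper's downstream applications (Example~\ref{example:surface_basic}, Lemma~\ref{lem:freefinite}, Corollary~\ref{cor:free}) the witnessing $Q_{(g,r)}$ \emph{is} the abelianization, so the paper's shortcut is harmless there, but as a proof of the lemma in the generality stated your version is the one that actually closes the argument. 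One small remark: the commutativity observation in your free-product paragraph is not needed — the universal property of the free product already gives a homomorphism to \emph{any} target group from compatible maps on the factors; commutativity of the images would only matter if you wanted to factor through a direct sum of the $\Lambda_{(g,r)}$ themselves, which you do not.
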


\begin{proof}
Let $\Gamma$ be either of the three groups above. Fix $(g,r)\in D$. Take a $\pi_1(\Sigma_g)$-triple $(\phi_{(g,r)},Q_{(g,r)},\sigma_{(g,r)})$ for $\Lambda_{(g,r)}$  that satisfies $\intrk^{Q_{(g,r)}}(P_{(g,r)})=2g$, where $P_{(g,r)}=(\sigma_{(g,r)}\circ \phi_{(g,r)})(\pi_1(\Sigma_g))$. Let $\iota_{(g,r)}\colon \Lambda_{(g,r)} \hookrightarrow \Gamma$ be the natural embedding. Then, the triple $(\phi_{(g,r)},Q,\sigma)=(\iota_{(g,r)}\circ \phi_{(g,r)}, \Gamma^{\mathrm{ab}},\mathrm{Ab}_{\Gamma})$ is a $\pi_1(\Sigma_g)$-triple for $\Gamma$ that satisfies \eqref{eq:ranksurf}.
\end{proof}

Lemma~\ref{lem=product}, together with Example~\ref{example:surface_basic} and Lemma~\ref{lem:freefinite}, yields the following corollary.

\begin{cor}\label{cor:free}
Let $J$ be a non-empty set of $\NN$. Let $(A_i)_{i\in \NN}$ be a family of abelian groups such that $\sperk(A_i)\geq 2$ for every $i\in \NN$. Then we have
\[
\bigast_{g\in J}\pi_1(\Sigma_g)\in \CC_{\Surf_{D_J}} \quad \textrm{and} \quad \bigast_{i\in \NN}A_i\in \CC_{\Surf_{D_{\NN}}},
\]
where $D_J=\{(g,2g)\;|\; g\in J\}$.
\end{cor}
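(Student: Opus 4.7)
The two assertions use different levels of the machinery built up so far. For the first, the plan is to combine Example~\ref{example:surface_basic} with Lemma~\ref{lem=product}: setting $\Lambda_{(g,2g)}:=\pi_1(\Sigma_g)$ for each $(g,2g)\in D_J$, one has $\Lambda_{(g,2g)}\in\CC_{\Surf_{\{(g,2g)\}}}$, and the free-product case of Lemma~\ref{lem=product} immediately gives $\bigast_{g\in J}\pi_1(\Sigma_g)=\bigast_{(g,2g)\in D_J}\Lambda_{(g,2g)}\in\CC_{\Surf_{D_J}}$.

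For the second assertion, Lemma~\ref{lem=product} cannot be invoked in the same way: choosing $\Lambda_{(g,2g)}:=\bigast_{i=1}^{g}A_i$ (which does lie in $\CC_{\Surf_{\{(g,2g)\}}}$ by Lemma~\ref{lem:freefinite}) would yield $\bigast_{g\in\NN}\bigast_{i=1}^{g}A_i$, which is not the group of interest. Instead, I would unfold the definition of $\CC_{\Surf_{D_{\NN}}}$ and produce a family of $\pi_1(\Sigma_g)$-triples sharing a common pair $(Q,\sigma)$. Set $\Gamma:=\bigast_{i\in\NN}A_i$; since the factors are abelian, $\Gamma^{\ab}=\bigoplus_{i\in\NN}A_i$. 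Take $Q:=\Gamma^{\ab}$ and $\sigma:=\Ab_{\Gamma}$. For each $g\in\NN$, apply Lemma~\ref{lem:freefinite} to $A_1,\ldots,A_g$ to obtain a $\pi_1(\Sigma_g)$-triple $(\phi_g',Q_g',\sigma_g')$ for $\Gamma_g':=\bigast_{i=1}^{g}A_i$ with $Q_g'=\bigoplus_{i=1}^{g}A_i$, $\sigma_g'=\Ab_{\Gamma_g'}$, and $\intrk^{Q_g'}(P_g')=2g$, where $P_g':=(\sigma_g'\circ\phi_g')(\pi_1(\Sigma_g))$. Postcompose $\phi_g'$ with the natural embedding $\Gamma_g'\hookrightarrow\Gamma$ to define $\phi_g\colon\pi_1(\Sigma_g)\to\Gamma$; the image $(\sigma\circ\phi_g)(\pi_1(\Sigma_g))$ coincides with the image of $P_g'$ under the natural inclusion $Q_g'\hookrightarrow Q$.

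Since both $Q$ and $Q_g'$ are abelian, Lemma~\ref{lem:abel} identifies the intermediate ranks with ordinary ranks, so
\[
\intrk^{Q}\bigl((\sigma\circ\phi_g)(\pi_1(\Sigma_g))\bigr)=\rk(P_g')=\intrk^{Q_g'}(P_g')=2g.
\]
Thus each $(\phi_g,Q,\sigma)$ is a $\pi_1(\Sigma_g)$-triple for $\Gamma$ satisfying condition \eqref{eq:ranksurf} for $(g,2g)\in D_{\NN}$, and since the pair $(Q,\sigma)$ is used uniformly in $g$, we obtain $\Gamma\in\CC_{\Surf_{D_{\NN}}}$. There is no real obstacle in the argument; the only point to be careful about is that $(Q,\sigma)$ must be chosen once and for all as the abelianization of the entire group $\Gamma$, rather than the abelianizations of its finite pieces $\Gamma_g'$.
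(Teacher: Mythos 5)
Your argument is correct; both parts go through. For the first assertion you follow the paper exactly: $\pi_1(\Sigma_g)\in\CC_{\Surf_{\{(g,2g)\}}}$ by Example~\ref{example:surface_basic}, and Lemma~\ref{lem=product} (free-product case) does the rest.

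For the second assertion you take a genuinely different route, and your stated reason for doing so is slightly off. You claim that Lemma~\ref{lem=product} ``cannot be invoked,'' but it can: the obstruction you describe only arises from the particular choice $\Lambda_{(g,2g)}=\bigast_{i=1}^g A_i$. If instead one partitions $\NN$ into disjoint consecutive blocks $B_1,B_2,B_3,\ldots$ with $\#B_g=g$ and sets $\Lambda_{(g,2g)}:=\bigast_{i\in B_g}A_i$, then Lemma~\ref{lem:freefinite} gives $\Lambda_{(g,2g)}\in\CC_{\Surf_{\{(g,2g)\}}}$, one has $\bigast_{g\in\NN}\Lambda_{(g,2g)}=\bigast_{i\in\NN}A_i$, and Lemma~\ref{lem=product} applies directly. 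This is evidently what the paper intends, since its proof consists of nothing more than citing Example~\ref{example:surface_basic}, Lemma~\ref{lem:freefinite}, and Lemma~\ref{lem=product}. Your alternative --- unfolding Definition~\ref{def:CSD}, choosing $(Q,\sigma)=(\Gamma^{\ab},\Ab_\Gamma)$ once and for all, and building the triples from the nested (not disjoint) initial segments $\bigast_{i=1}^g A_i$ --- is equally valid and essentially re-derives the relevant instance of Lemma~\ref{lem=product} by hand; the key point you correctly isolate, that Lemma~\ref{lem:abel} makes the intermediate rank stable under enlarging the ambient abelian group, is precisely what makes Lemma~\ref{lem=product} work in the first place. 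The paper's route is shorter since it reuses the already-proved lemma as a black box; yours is more explicit and has the small pedagogical advantage of not requiring a clever re-indexing of the free factors.
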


For a general group $\Gamma$ and its subgroup $\Lambda$, it seems difficult to bound $\intrk^{\Gamma}(\Lambda)$ from below. However, it is easy to check whether $\intrk^{\Gamma}(\Lambda)\leq 1$  since a group of rank at most $1$ must be cyclic. This observation yields the following proposition.

\begin{prop}\label{prop:noZ2}
Every group $\Gamma$ that contains an abelian subgroup $\Lambda$ with $\sperk(\Lambda)\geq  2$ is a member of $\mathcal{C}_{\mathrm{Surf}_{\{(1,2)\}}}$.
\end{prop}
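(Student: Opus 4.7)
The plan is to construct directly a $\pi_1(\Sigma_1)$-triple for $\Gamma$ witnessing $\Gamma \in \mathcal{C}_{\mathrm{Surf}_{\{(1,2)\}}}$. Since $\pi_1(\Sigma_1)\cong \ZZ^2$, specifying such a $\phi$ amounts to choosing two commuting elements of $\Gamma$, and the main task is to arrange that their images, after a suitable quotient, have intermediate rank exactly $2$.

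First I would extract from the hypothesis $\sperk(\Lambda)\geq 2$ a rank-$2$ subgroup $\Lambda_0=\langle \lambda_1,\lambda_2\rangle\leqslant \Lambda$. Indeed, $\sperk(\Lambda)\geq 2$ yields a finitely generated subgroup $\Lambda_0'\leqslant \Lambda$ with $\rk(\Lambda_0')\geq 2$; since a finitely generated locally cyclic abelian group is cyclic, $\Lambda_0'$ is not locally cyclic, and so some two-generator subgroup $\Lambda_0=\langle \lambda_1,\lambda_2\rangle$ of $\Lambda_0'$ fails to be cyclic, i.e.\ has $\rk(\Lambda_0)=2$. I then define $\phi\colon \pi_1(\Sigma_1)=\ZZ^2\to \Gamma$ by $(m,n)\mapsto \lambda_1^m\lambda_2^n$; this is a well-defined homomorphism because $\lambda_1,\lambda_2$ commute inside the abelian group $\Lambda$, and its image is $\Lambda_0$.

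For the quotient data, the simplest possible choice works: take $Q=\Gamma$ and $\sigma=\mathrm{id}_\Gamma$. Then $P:=(\sigma\circ\phi)(\pi_1(\Sigma_1))=\Lambda_0$ is abelian, so $(\phi,\Gamma,\mathrm{id}_\Gamma)$ is a $\pi_1(\Sigma_1)$-triple in the sense of Definition~\ref{def:triple}. It remains to verify $\intrk^\Gamma(\Lambda_0)=2$, as required by \eqref{eq:ranksurf} with $(g,r)=(1,2)$. The upper bound $\intrk^\Gamma(\Lambda_0)\leq \rk(\Lambda_0)=2$ is immediate from the definition. For the lower bound, any intermediate subgroup $\Theta$ with $\Lambda_0\leqslant \Theta\leqslant \Gamma$ and $\rk(\Theta)\leq 1$ would be cyclic, and since subgroups of a cyclic group are cyclic, this would force $\Lambda_0$ to be cyclic, contradicting $\rk(\Lambda_0)=2$.

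No serious obstacle is expected: the argument is essentially a definition-chase that relies only on the standard fact that every subgroup of a cyclic group is cyclic, together with the elementary observation that a finitely generated non-cyclic abelian group contains a non-cyclic two-generator subgroup. The slightly subtle point is simply to notice that taking $Q=\Gamma$ itself (rather than, say, $\Gamma^{\ab}$) is already sufficient, which avoids the potential problem that $\Lambda_0$ might lie in $[\Gamma,\Gamma]$.
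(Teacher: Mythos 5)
Your proof is correct and takes essentially the same route as the paper: extract a rank-$2$ subgroup $\Lambda_0 \leqslant \Lambda$, map $\pi_1(\Sigma_1)\simeq\ZZ^2$ onto it, and use the trivial quotient $(Q,\sigma)=(\Gamma,\mathrm{id}_\Gamma)$, with the lower bound $\intrk^\Gamma(\Lambda_0)\geq 2$ following from the fact that a group of rank at most $1$ is cyclic and hence has only cyclic subgroups. You merely spell out two steps the paper takes for granted (how $\sperk(\Lambda)\geq 2$ yields a subgroup of rank exactly $2$, and the cyclicity argument for the lower bound), which is harmless.
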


\begin{proof}
By assumption, we can take $\Lambda_1\leqslant \Lambda$ with $\rk(\Lambda_1)=2$. Recall that $\pi_1(\Sigma_1)\simeq \ZZ^2$. Hence, there exists a surjective homomorphism $\phi\colon \pi_1(\Sigma_1)\twoheadrightarrow \Lambda_1$. Let $\iota\colon \Lambda_1\hookrightarrow \Gamma$ be the inclusion map. Then, $(\iota\circ\phi,\Gamma,\mathrm{id}_{\Gamma})$ is a $\pi_1(\Sigma_1)$-triple for $\Gamma$. We have
\[
\intrk^{\Gamma}(\Lambda_1)= 2
\]
since $\Lambda_1$ is not cyclic. Hence, $\Gamma\in \mathcal{C}_{\mathrm{Surf}_{\{(1,2)\}}}$.
\end{proof}

\subsection{Fundamental groups of mapping tori}\label{subsec:mapping}
Here we discuss  examples of groups in $\CSD$  coming from $3$-dimensional (hyperbolic) geometry. For $g\in \NN$, let $\mathrm{Mod}(\Sigma_g)$ denote the \emph{mapping class group} of $\Sigma_g$: it is defined as the group quotient of the group of orientation-preserving diffeomorphisms modulo isotopy. It is well known that reduction to the action on $\HHH_1(\Sigma_g;\ZZ)\simeq \ZZ^{2g}$ (equipped with a natural symplectic structure coming from the intersection form) produces the natural symplectic representation $s_g\colon \mathrm{Mod}(\Sigma_g)\twoheadrightarrow \mathrm{Sp}(2g,\ZZ)$ of $\mathrm{Mod}(\Sigma_g)$. For an orientation-preserving diffeomorphism $f\colon \Sigma_{g} \to \Sigma_{g}$, let $T_f$ denote the \emph{mapping torus} of $f$, meaning that,
\[
T_f:=(\Sigma_g\times [0,1])/((p,0)\sim(f(p),1)\ \textrm{for every}\ p\in \Sigma_g).
\]
The celebrated theorem by Thurston \cite{Thurston} states that $T_f$ is a hyperbolic manifold if and only if $\psi\in \mathrm{Mod}(\Sigma_g)$ is a pseudo-Anosov class if and only if $\pi_1(T_f)$ does not contain $\ZZ^2$. Hence, if $[f]$ is not a pseudo-Anosov class, then $\pi_1(T_f)\in \mathcal{C}_{\mathrm{Surf}_{\{(1,2)\}}}$ by Proposition~\ref{prop:noZ2}.

The fundamental group $\pi_1(T_f)$ is described in terms of the isotopy class $\psi=[f]\in \Mod(\Sigma_g)$ as follows. Let $\Psi\in \mathrm{Aut}(\pi_1(\Sigma_g))$ be the automorphism of $\pi_1(\Sigma_g)$ induced by $f$. Then we have a natural isomorphism
\begin{eqnarray}\label{eq:mapping_pi1}
\pi_1(T_f)\simeq \pi_1(\Sigma_g)\rtimes_{\Psi}\ZZ.
\end{eqnarray}
Here in the right hand side, the $\ZZ$-action is given by $\Psi$. Then, formation of the abelianization of $\pi_1(\Sigma_g)$ induces the quotient map
\begin{eqnarray}\label{eq:mapping_solv}
\sigma\colon \pi_1(T_f)\twoheadrightarrow \ZZ^{2g}\rtimes_{s_g(\psi)}\ZZ,
\end{eqnarray}
where the $\ZZ$-action on $\ZZ^{2g}$ is given by $s_g(\psi)\in \Sp(2g,\ZZ)$. Therefore, we have the following result.

\begin{lem}\label{lem:mapping}
Let $g\in \NN_{\geq 2}$ and let $\psi\in \Mod(\Sigma_g)$. Let $f\colon \Sigma_g\to\Sigma_g$ be a diffeomorphism on $\Sigma_g$ whose isotopy class is $\psi$. Let $\phi\colon \pi_1(\Sigma_g)\hookrightarrow \pi_1(T_f)$ be the natural embedding from \eqref{eq:mapping_pi1}. Let $\sigma$ be the map in \eqref{eq:mapping_solv}. Then, the triple $(\phi,Q,\sigma)=(\phi, \ZZ^{2g}\rtimes_{s_g(\psi)}\ZZ,\sigma)$ is a $\pi_1(\Sigma_g)$-triple for $\pi_1(T_f)$.
\end{lem}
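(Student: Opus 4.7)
The plan is to unpack the three data making up a $\pi_1(\Sigma_g)$-triple (as in Definition~\ref{def:triple}) and verify each, with the whole substance concentrated in the final abelianness check. The key observation is that the quotient map $\sigma$ in \eqref{eq:mapping_solv} is constructed precisely so that, on the copy of $\pi_1(\Sigma_g)$ embedded by $\phi$, it factors through the abelianization; this makes the third condition essentially automatic.

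First I would fix notation by writing every element of $\pi_1(T_f)\simeq \pi_1(\Sigma_g)\rtimes_{\Psi}\ZZ$ as a pair $(x,n)$ with $x\in \pi_1(\Sigma_g)$ and $n\in\ZZ$; under this presentation $\phi(x)=(x,0)$, which is clearly a group homomorphism, in fact an injection. Next I would verify that $\sigma$ in \eqref{eq:mapping_solv} is a well-defined surjective homomorphism: by the definition of the symplectic representation $s_g$, the automorphism $\Psi\in \Aut(\pi_1(\Sigma_g))$ induces $s_g(\psi)\in \Sp(2g,\ZZ)$ on the abelianization $\pi_1(\Sigma_g)^{\ab}\cong \ZZ^{2g}$, so the map
\[
(x,n)\longmapsto (\Ab_{\pi_1(\Sigma_g)}(x),n)
\]
is compatible with the twisted multiplications on both sides and gives a surjective homomorphism from $\pi_1(\Sigma_g)\rtimes_{\Psi}\ZZ$ onto $\ZZ^{2g}\rtimes_{s_g(\psi)}\ZZ$; this is the map $\sigma$.

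The main (and only) substantive step is to identify $P=(\sigma\circ\phi)(\pi_1(\Sigma_g))$ and observe its abelianness. Composing the two maps, I obtain
\[
(\sigma\circ\phi)(x)=\sigma(x,0)=(\Ab_{\pi_1(\Sigma_g)}(x),0),
\]
so $P=\ZZ^{2g}\times\{0\}$ sits as the normal factor of the semidirect product $Q=\ZZ^{2g}\rtimes_{s_g(\psi)}\ZZ$. Since $\ZZ^{2g}$ is abelian, so is $P$, which verifies the last requirement in Definition~\ref{def:triple}. I do not anticipate any real obstacle: the lemma is essentially a bookkeeping statement extracting the $\pi_1(\Sigma_g)$-triple structure already implicit in \eqref{eq:mapping_pi1} and \eqref{eq:mapping_solv}. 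The hypothesis $g\geq 2$ plays no role in the abstract verification and is presumably retained only to match the ambient setting of subsequent applications.
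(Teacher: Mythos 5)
Your proof is correct and matches the paper's approach; indeed the paper states Lemma~\ref{lem:mapping} without a separate proof, treating it as immediate from the constructions \eqref{eq:mapping_pi1} and \eqref{eq:mapping_solv}. Your explicit computation that $(\sigma\circ\phi)(x)=(\Ab_{\pi_1(\Sigma_g)}(x),0)$, hence $P=\ZZ^{2g}\times\{0\}\cong\ZZ^{2g}$ is abelian, is exactly the check the paper leaves implicit.
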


In Lemma~\ref{lem:mapping}, set $Q=\ZZ^{2g}\rtimes_{s_g(\psi)}\ZZ$ and $P=(\sigma\circ \phi)(\pi_1(\Sigma_g))(\simeq \ZZ^{2g})$. The next task is to compute $\intrk^Q(P)$ from below. Levitt--Metaftsis \cite{LM} and Amoroso--Zannier \cite{AZ} obtained the following result. Here for $d\in \NN$, let $\mathrm{Mat}_{d\times d}(\ZZ)$ denote the ring of $d\times d$ integer matrices; we regard it as a subring of $\mathrm{Mat}_{d\times d}(\mathbb{C})$, the ring of $d\times d$ complex  matrices and discuss eigenvalues and eigenspaces of elements of $\mathrm{Mat}_{d\times d}(\ZZ)$ as those of $\mathrm{Mat}_{d\times d}(\mathbb{C})$. Their results are stated in terms of the following concepts.

\begin{definition}
Let $d\in \NN_{\geq 2}$ and $A\in \mathrm{Mat}_{d\times d}(\ZZ)$.
\begin{enumerate}[$(1)$]
    \item Let $v\in \ZZ^d$. Then, the \emph{$A$-orbit} of $v$ is the set
\[
\{A^nv\;|\; n\in\ZZ_{\geq 0}\},
\]
where $A^0:=I_d$.
  \item We define $\OR(A)$ as the minimal number of elements in $\ZZ^d$ whose $A$-orbits generate $\ZZ^d$ as a $\ZZ$-module.
\end{enumerate}
\end{definition}

If $A\in \GL(n,\ZZ)$, then the Cayley--Hamilton theorem implies that the $\ZZ$-module generated by the $A$-orbit of $v$ equals that generated by the set $\{A^nv\;|\; n\in\ZZ\}$ for every $v\in \ZZ^d$.

\begin{thm}[\cite{LM}, \cite{AZ}]\label{thm:rkmapping}
Let $d\in \NN_{\geq 2}$ and $A\in \mathrm{Mat}_{d\times d}(\ZZ)$. Set
\begin{eqnarray}\label{eq:numbertheory}
C_d=\prod_{q\leq d}q,
\end{eqnarray}
where $q$ runs over the prime powers at most $d$.
Then the following hold.
\begin{enumerate}[$(1)$]
  \item $($\cite[Corollary~2.4]{LM}$)$ Assume that $A\in \GL(d,\ZZ)$. Let $H=\ZZ^d\rtimes_A\ZZ$, where $\ZZ$-action on $\ZZ^d$ is given by $A$. Then, we have
\[
\rk(H)=1+\OR(A).
\]
  \item $($\cite[Theorem~1.5]{LM}$)$ Assume that $A\in\GL(d,\ZZ)$ and that $A$ is of infinite order. Then, there exists $n_0\in \NN$ such that for every $n\in \NN$ with $n\geq n_0$,
\[
\OR(A^n)\geq 2
\]
holds.
  \item $($\cite[Theorem~1.5]{AZ}$)$ There exists an effective absolute constant $c>0$ such that in the setting of $(2)$, we can take
\[
n_0=\lceil cd^6(\log d)^6\rceil.
\]
 \item $($\cite[Remark~4.1]{AZ}$)$ Assume that $A$ has only one eigenvalue. Then for every $n\geq C_d$, we have
\[
\OR(A^n)= d.
\]
 \item $($\cite[Remark~4.2]{AZ}$)$ Assume that $A$ has two eigenvalues whose ratio is not a root of unity. Let $r$ be the sum of the dimensions of their eigenspaces. Then for every $n\geq C_d$, we have
\[
\OR(A^n)\geq  r.
\]
\end{enumerate}
\end{thm}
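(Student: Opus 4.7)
The plan is to handle (1) by a direct module-theoretic argument and to reduce (2)--(5) to a diophantine analysis of the eigenvalues of $A$.

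For part (1), I identify $\OR(A)$ with the minimum number of $\ZZ[A]$-module generators of $\ZZ^d$ (the $A$-orbit of $v$ generates the $\ZZ$-submodule $\ZZ[A]v$). The upper bound $\rk(H) \leq 1 + \OR(A)$ is immediate: if $v_1, \ldots, v_k$ generate $\ZZ^d$ over $\ZZ[A]$, then $\{(v_1, 0), \ldots, (v_k, 0), (0, 1)\}$ generates $H$. For the lower bound, let $h_1, \ldots, h_s$ generate $H$ and let $d_i \in \ZZ$ be the image of $h_i$ in $H/\ZZ^d \cong \ZZ$. Since $\gcd(d_i) = 1$, a word $g_1$ in the $h_i$ has projection $1$; replacing each $h_j$ ($j \geq 2$) by $g_1^{-d_j} h_j$ yields a generating set $\{g_1, v_2, \ldots, v_s\}$ with $v_j \in \ZZ^d$ and with conjugation by $g_1$ implementing $A$ on $\ZZ^d$. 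Hence $v_2, \ldots, v_s$ generate $\ZZ^d$ as a $\ZZ[A, A^{-1}]$-module. Since $A \in \GL(d,\ZZ)$ has $\det A = \pm 1$, Cayley--Hamilton expresses $A^{-1}$ as an integer polynomial in $A$, so $\ZZ[A, A^{-1}] = \ZZ[A]$, and we conclude $\OR(A) \leq s - 1$.

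For parts (2)--(5) I would pass to $\QQ^d$ and analyze $A$ via its rational canonical form. The minimum number of $\QQ[A]$-generators of $\QQ^d$ equals the number of invariant factors of $A$ over $\QQ$, and this provides a lower bound for $\OR(A^n)$. The crucial observation is that the invariant-factor count of $A^n$ can exceed that of $A$: distinct eigenvalues $\alpha, \beta$ of $A$ merge in $A^n$ precisely when $(\alpha/\beta)^n = 1$. Thus the growth of $\OR(A^n)$ is governed by multiplicative relations among the eigenvalues. Part (2) then follows qualitatively: by Kronecker's theorem on integer polynomials, an infinite-order $A \in \GL(d,\ZZ)$ admits either an eigenvalue that is not a root of unity or a non-trivial Jordan block, and either case forces $\OR(A^n) \geq 2$ for all sufficiently large $n$. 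Part (4) concerns the one-eigenvalue case: once $n \geq C_d = \prod_{q \leq d} q$, every root-of-unity identification inside the Jordan structure of $A$ is absorbed, and $A^n - \lambda^n I$ attains nilpotency index $d$, yielding $\OR(A^n) = d$. Part (5) combines (3) with (4) block by block on the eigenspace decomposition.

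The main obstacle is the effective diophantine content of (3) and (5). Making ``large $n$'' explicit requires a quantitative guarantee that $\alpha^n \neq \beta^n$ for all $n \geq n_0$ whenever $\alpha, \beta$ are distinct eigenvalues of $A$ with $\alpha/\beta$ not a root of unity; equivalently, a lower bound on $|n \log(\alpha/\beta) - 2\pi k i|$ for every $k \in \ZZ$. This is a height-theoretic problem of Lehmer type, and the explicit bound $n_0 = \lceil c d^6 (\log d)^6 \rceil$ reflects the state-of-the-art lower bounds on heights of algebraic integers of bounded degree obtained by Amoroso--Zannier. Since this arithmetic input lies outside the main thrust of the present paper, I would present (3) and (5) as direct citations of \cite{AZ}, with only the module-theoretic reduction spelled out in detail; the qualitative statement (2) is softer and admits an elementary argument using the characteristic polynomial and Jordan structure.
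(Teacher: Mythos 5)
The paper offers no proof of this theorem at all: it is imported wholesale from Levitt--Metaftsis and Amoroso--Zannier, with each item carrying an explicit citation. So your decision to treat (3) and (5) as direct citations of \cite{AZ} is exactly what the paper does, and your self-contained argument for (1) goes beyond the paper. That argument is correct and is essentially the standard one behind \cite[Corollary~2.4]{LM}: the upper bound via the generators $(v_i,0)$ together with $(0,1)$, and the lower bound via a Nielsen transformation bringing the projections to $(1,0,\dots,0)$ so that the remaining generators lie in $\ZZ^d$ and generate it over $\ZZ[A,A^{-1}]=\ZZ[A]$ (the paper records this last Cayley--Hamilton observation just before the theorem). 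The only gloss is that replacing $h_j$ by $g_1^{-d_j}h_j$ while discarding $h_1$ in favour of $g_1$ needs the (routine) justification that these moves preserve the property of generating $H$.

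The genuine problem is your mechanism for (4), which is backwards. If $A-\lambda I$ has nilpotency index $d$, then $\QQ^d$ is a \emph{cyclic} $\QQ[A^n]$-module for all $n$, so the rational invariant-factor count --- your proposed lower bound --- equals $1$, not $d$; large nilpotency index pushes toward fewer generators, not more. The statement $\OR(A^n)=d$ is an integral, congruential phenomenon: for instance, for a single unipotent block $A=I+N$ one has $A^n-I=nN+\binom{n}{2}N^2+\cdots$, and one shows $A^n\equiv \lambda^n I$ modulo a suitable prime power dividing $n$ (or more generally that $(A^n-\lambda^nI)\ZZ^d\subseteq m\ZZ^d$ for some $m\ge 2$), whence $\ZZ^d/m\ZZ^d$ needs $d$ generators over $\ZZ[A^n]$ and so does $\ZZ^d$. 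This is also why the threshold $C_d=\prod_{q\le d}q$ is a product of prime powers. The same issue infects the unipotent branch of your sketch of (2): a nontrivial Jordan block gives rational invariant-factor count $1$, so the bound $\OR(A^n)\ge 2$ there again requires an integral argument (as in the $2\times 2$ case, where the $\ZZ$-span of an $A^n$-orbit has index divisible by $n$), not the rational canonical form. Since the paper cites all of (2)--(5), none of this affects the paper, but as written your (2) and (4) would not survive as proofs.
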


In~(4), if $A\in \GL(d,\ZZ)$, then the unique eigenvalue of $A$ must be either $1$ or $-1$. We state the following immediate corollary to Theorem~\ref{thm:rkmapping}~(1).

\begin{cor}\label{cor:intrank}
Let $d\in \NN_{\geq 2}$ and $A\in \GL(d,\ZZ)$. Let $H=\ZZ^d\rtimes_A\ZZ$ and $K=\ZZ^d$, the kernel of the natural projection $H\twoheadrightarrow \ZZ$. Then
\[
\intrk^H(K)=\min\{d, \rk(H)\}=\min\{d, 1+\OR(A)\}.
\]
\end{cor}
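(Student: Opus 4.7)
The plan is to establish both inequalities directly, using Theorem~\ref{thm:rkmapping}~(1) combined with a structural classification of the subgroups $\Theta$ with $K \leqslant \Theta \leqslant H$.

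For the upper bound $\intrk^H(K)\leq \min\{d,1+\OR(A)\}$, I would test two concrete choices of $\Theta$: first $\Theta=H$, which by Theorem~\ref{thm:rkmapping}~(1) has rank $1+\OR(A)$, and second $\Theta=K\cong\ZZ^d$, which has rank $d$. Taking the infimum over these two candidates alone already yields the required upper bound.

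For the lower bound, the key step is to classify all intermediate subgroups $\Theta$ with $K \leqslant \Theta \leqslant H$. Let $q\colon H\twoheadrightarrow \ZZ$ denote the canonical projection with kernel $K$. Then $q(\Theta)$ is a subgroup of $\ZZ$, hence either trivial or of the form $m\ZZ$ for some $m\in\NN$. In the former case $\Theta=K$ and $\rk(\Theta)=d$. In the latter case, since $m\ZZ$ is free, the short exact sequence
\[
1\longrightarrow K \longrightarrow \Theta \stackrel{q|_{\Theta}}{\longrightarrow} m\ZZ \longrightarrow 1
\]
splits, and the conjugation action of a generator of $m\ZZ$ on $K=\ZZ^d$ is by $A^m$; thus $\Theta \cong \ZZ^d\rtimes_{A^m}\ZZ$. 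Since $A\in \GL(d,\ZZ)$ implies $A^m\in\GL(d,\ZZ)$, Theorem~\ref{thm:rkmapping}~(1) gives $\rk(\Theta)=1+\OR(A^m)$.

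It remains to observe $\OR(A^m)\geq \OR(A)$, which closes the argument. The reason is that for any $v\in\ZZ^d$, the $A^m$-orbit $\{A^{mn}v\mid n\geq 0\}$ is a subset of the $A$-orbit $\{A^n v\mid n\geq 0\}$, so any finite collection of vectors whose $A^m$-orbits generate $\ZZ^d$ automatically has the property that their $A$-orbits generate $\ZZ^d$. Hence $\rk(\Theta)=1+\OR(A^m)\geq 1+\OR(A)$ in the nontrivial case, while $\rk(\Theta)=d$ in the trivial case, so in every situation $\rk(\Theta)\geq \min\{d,1+\OR(A)\}$. Combining the two bounds gives $\intrk^H(K)=\min\{d,1+\OR(A)\}$, and the identification with $\min\{d,\rk(H)\}$ is immediate from Theorem~\ref{thm:rkmapping}~(1). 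There is no real obstacle here; the proof is essentially bookkeeping, with the only genuine content being the monotonicity $\OR(A^m)\geq \OR(A)$ and the observation that every intermediate $\Theta$ is itself a split extension of the same form as $H$.
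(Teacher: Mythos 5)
Your proof is correct and follows essentially the same route as the paper: classify the intermediate subgroups $\Theta$ with $K\leqslant\Theta\leqslant H$ as $K\rtimes_A(l\ZZ)$, apply Theorem~\ref{thm:rkmapping}~(1), and use the monotonicity $\OR(A^m)\geq\OR(A)$. The only difference is that you explicitly justify this monotonicity (via the observation that the $A^m$-orbit is a subset of the $A$-orbit), whereas the paper simply asserts it; this is a welcome bit of added detail, not a different approach.
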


\begin{proof}
First observe that every group $\Theta$ with $K\leqslant \Theta \leqslant H$ is of the form $K\rtimes_{A}(l\ZZ)$ with $l\in \ZZ_{\geq 0}$. By Theorem~\ref{thm:rkmapping}~(1), for every $l\in \NN$, we have
\[
\rk(K\rtimes_{A}(l\ZZ))=1+\OR(A^l)\geq 1+\OR(A)=\rk(K\rtimes_{A}\ZZ).
\]
If $l=0$, then $\rk(K\rtimes_{A}(0\ZZ))=\rk(K)=d$. Hence, we obtain the conclusion.
\end{proof}

By letting $d=2g$, we have the following proposition from Lemma~\ref{lem:mapping} and Corollary~\ref{cor:intrank}.

\begin{prop}\label{prop:mapping_torus}
Let $g\in \NN_{\geq 2}$. Let  $\psi\in \mathrm{Mod}(\Sigma_g)$. Let $f \colon \Sigma_{g} \to \Sigma_{g}$ be a diffeomorphism whose isotopy class $[f]$ is $\psi$. Let $s_g\colon \mathrm{Mod}(\Sigma_g)\twoheadrightarrow \mathrm{Sp}(2g,\ZZ)$ be the symplectic representation. Let $Q=\ZZ^{2g}\rtimes_{s_g(\psi)} \ZZ$. Assume that  $\rk(Q)\geq g+1$, equivalently, that $\OR(s_g(\psi))\geq g$. Then,
\[
\pi_1(T_f)\in \CC_{\Surf_{\{(g,r)\}}},
\]
where $r=\min\{2g, \OR(s_g(\psi))+1\}$.
\end{prop}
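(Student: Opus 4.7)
The plan is to verify membership in $\CC_{\Surf_{\{(g,r)\}}}$ directly from Definition~\ref{def:CSD}, by taking the $\pi_1(\Sigma_g)$-triple already built in Lemma~\ref{lem:mapping} and applying Corollary~\ref{cor:intrank} to compute the required intermediate rank. All the serious work has been done in those earlier results, so the proposition should reduce to an assembly step.

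First, I would invoke Lemma~\ref{lem:mapping} with the given $\psi$ and $f$ to produce the $\pi_1(\Sigma_g)$-triple $(\phi, Q, \sigma)$ for $\pi_1(T_f)$, where $\phi\colon \pi_1(\Sigma_g)\hookrightarrow \pi_1(T_f)$ is the inclusion coming from $\pi_1(T_f)\simeq \pi_1(\Sigma_g)\rtimes_{\Psi}\ZZ$, and $\sigma\colon \pi_1(T_f)\twoheadrightarrow Q=\ZZ^{2g}\rtimes_{s_g(\psi)}\ZZ$ is induced by abelianizing the surface-group factor. Because $\sigma$ is defined via the abelianization of $\pi_1(\Sigma_g)$, the composition $\sigma\circ\phi$ is the abelianization $\pi_1(\Sigma_g)\twoheadrightarrow \pi_1(\Sigma_g)^{\ab}\simeq \ZZ^{2g}$ followed by the inclusion into $Q$, so $P:=(\sigma\circ\phi)(\pi_1(\Sigma_g))$ coincides with the normal subgroup $\ZZ^{2g}\leqslant Q$, namely the kernel of the natural projection $Q\twoheadrightarrow \ZZ$.

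Next I would apply Corollary~\ref{cor:intrank} with $d=2g$, $A=s_g(\psi)\in\GL(2g,\ZZ)$, $H=Q$, and $K=P=\ZZ^{2g}$. This immediately yields
\[
\intrk^{Q}(P)=\min\{2g,\ 1+\OR(s_g(\psi))\}=r.
\]
The remaining task is to verify that this value $r$ lies in the admissible range $[g+1,2g]$ required by Definition~\ref{def:CSD}. The upper bound $r\leq 2g$ is immediate, and the lower bound $r\geq g+1$ follows from the hypothesis $\OR(s_g(\psi))\geq g$ together with the inequality $2g\geq g+1$ valid for $g\in \NN_{\geq 2}$. The stated equivalence between $\rk(Q)\geq g+1$ and $\OR(s_g(\psi))\geq g$ is itself Theorem~\ref{thm:rkmapping}~(1), which identifies $\rk(Q)$ with $1+\OR(s_g(\psi))$.

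There is essentially no genuine obstacle in this proof: the geometric content sits in Lemma~\ref{lem:mapping} (producing the relevant triple from the mapping torus description of $\pi_1(T_f)$) and the arithmetic content sits in Theorem~\ref{thm:rkmapping}~(1) and Corollary~\ref{cor:intrank} (computing the intermediate rank of $\ZZ^{2g}$ inside the semidirect product $Q$). The hypothesis $\OR(s_g(\psi))\geq g$ is inserted precisely so that the resulting value of $\intrk^Q(P)$ lands at or above the threshold $g+1$, i.e.\ so that the triple is admissible for the class $\CC_{\Surf_{\{(g,r)\}}}$; hence the only thing to write down carefully is this range check.
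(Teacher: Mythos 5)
Your proof is correct and takes essentially the same route the paper intends: the paper presents this proposition as an immediate consequence of Lemma~\ref{lem:mapping} (which supplies the $\pi_1(\Sigma_g)$-triple $(\phi,Q,\sigma)$ with $P\simeq\ZZ^{2g}$ the kernel of $Q\twoheadrightarrow\ZZ$) and Corollary~\ref{cor:intrank} with $d=2g$, $A=s_g(\psi)$, which gives $\intrk^Q(P)=\min\{2g,1+\OR(s_g(\psi))\}=r$; you spell out exactly these steps along with the range check $g+1\leq r\leq 2g$ needed for admissibility in $\CC_{\Surf_{\{(g,r)\}}}$ and the identification $\rk(Q)=1+\OR(s_g(\psi))$ from Theorem~\ref{thm:rkmapping}~(1).
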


Then, (2)--(5) of Theorem~\ref{thm:rkmapping} yield the following theorem. We recall that the kernel of the symplectic representation $s_g\colon \Mod(\Sigma_g)\twoheadrightarrow \Sp(2g,\ZZ)$ is called the \emph{Torelli group} $\mathcal{I}(\Sigma_g)$ of $\Sigma_g$.

\begin{thm}[groups in $\CSD$ from mapping tori]\label{thm:mapping_tori}
Assume the setting of Proposition~$\ref{prop:mapping_torus}$. Then the following hold true. Here, the constant $C_d$ for $d\in \NN_{\geq 2}$ is given by \eqref{eq:numbertheory}.
\begin{enumerate}[$(1)$]
  \item Assume that $s_g(\psi)\in \{\pm I_{2g}\}$. Then, $\pi_1(T_f)\in \CC_{\Surf_{\{(g,2g)\}}}$ holds. In particular, if $\psi\in \mathcal{I}(\Sigma_g)$, then we have $\pi_1(T_f)\in \CC_{\Surf_{\{(g,2g)\}}}$.
  \item Assume that $s_g(\psi)\in \Sp(2g,\ZZ)$ has only one eigenvalue $($hence either $1$ or $-1$$)$. Then, for every $n\in \NN$ with $n\geq C_{2g}$, we have
\[
\pi_1(T_{f^n})\in \CC_{\Surf_{\{(g,2g)\}}}.
\]
  \item Let $t$ be an integer with $t\geq g$. Assume that $s_g(\psi)$ has two eigenvalues whose ratio is not a root of unity. Moreover, assume that the sum of the dimensions of their eigenspaces is at least $t$. Then for every $n\in \NN$ with $n\geq C_{2g}$, we have
\[
\pi_1(T_{f^n})\in \bigcup_{r=\min\{2g,t+1\}}^{2g}\CC_{\Surf_{\{(g,r)\}}}.
\]
  \item Assume that $g=2$. Then there exists an effective absolute constant $n_0\in \NN$ such that the following holds true: assume that $s_2(\psi)$ is of infinite order. Then for every $n\in \NN$ with $n\geq n_0$, we have
\[
\pi_1(T_{f^n})\in \CC_{\Surf_{\{(2,3)\}}}\cup \CC_{\Surf_{\{(2,4)\}}}.
\]
\end{enumerate}
\end{thm}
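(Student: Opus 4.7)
Each of the four assertions reduces to verifying the hypotheses of a statement already established in the excerpt. Parts (1) and (2) follow from elementary observations together with Proposition~\ref{prop:noZ2}, Example~\ref{example:surface_basic}, and Lemma~\ref{lem=product}, while parts (3) and (4) are obtained by feeding the mapping-torus setup of Lemma~\ref{lem:mapping} and Proposition~\ref{prop:mapping_torus} with the orbit-rank estimates of Theorem~\ref{thm:rkmapping} (due to Levitt--Metaftsis and Amoroso--Zannier). The work is hypothesis-checking; no new ingredient is required.

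\textbf{Parts (1) and (2).} For (1), since the abelian subgroup $\Lambda$ is not locally cyclic, it contains a finitely generated non-cyclic subgroup. By the classification of finitely generated abelian groups, such a subgroup has rank at least $2$, so $\sperk(\Lambda)\geq 2$. Applying Proposition~\ref{prop:noZ2} to $\Gamma$ and $\Lambda$ yields $\Gamma\in \CC_{\Surf_{\{(1,2)\}}}$. For (2), the single-factor assertion is exactly Example~\ref{example:surface_basic}. The free-product assertion then follows from Lemma~\ref{lem=product} applied to the family $(\Lambda_{(g,2g)})_{g\in J}=(\pi_1(\Sigma_g))_{g\in J}$, each of which lies in $\CC_{\Surf_{\{(g,2g)\}}}$ by the single-factor case; indeed this is a special case of Corollary~\ref{cor:free} already recorded in Subsection~\ref{subsec:ex}.

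\textbf{Part (4).} The plan is to compute $\OR(s_g(\psi))$ directly. If $A\in \{\pm I_{2g}\}$, then for every $v\in \ZZ^{2g}$ one has $A^n v\in \{v,-v\}$, so the $A$-orbit of $v$ generates only the rank-one sublattice $\ZZ v$. Consequently, at least $2g$ vectors are needed to generate $\ZZ^{2g}$ as a $\ZZ$-module (and the standard basis suffices), so $\OR(s_g(\psi))=2g$. In particular $\OR(s_g(\psi))\geq g$, and Proposition~\ref{prop:mapping_torus} applies to give $\pi_1(T_f)\in \CC_{\Surf_{\{(g,r)\}}}$ with $r=\min\{2g,\,2g+1\}=2g$, as desired.

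\textbf{Part (3) and the main obstacle.} Fix $g=2$, set $d=2g=4$, and let $A=s_2(\psi)\in \Sp(4,\ZZ)\leq \GL(4,\ZZ)$. Since $A$ has infinite order, Theorem~\ref{thm:rkmapping}~(3) produces an effective absolute constant $n_0=\lceil c\cdot 4^6(\log 4)^6\rceil$ such that $\OR(A^n)\geq 2$ for every $n\geq n_0$. For such an $n$, the diffeomorphism $f^n$ has isotopy class $\psi^n$, and since $s_2$ is a group homomorphism, $s_2(\psi^n)=A^n$, hence $\OR(s_2(\psi^n))\geq 2=g$. Proposition~\ref{prop:mapping_torus} applied to $f^n$ then gives $\pi_1(T_{f^n})\in \CC_{\Surf_{\{(2,r_n)\}}}$ with $r_n=\min\{4,\,\OR(A^n)+1\}\in \{3,4\}$, proving the claim. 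The only genuine subtlety is the word ``effective absolute'' in the constant $n_0$: one must observe that the constant provided by Theorem~\ref{thm:rkmapping}~(3) depends only on $d=4$ and not on the matrix $A$, so it may be chosen uniformly across all $\psi$ with $s_2(\psi)$ of infinite order. This is the main conceptual obstacle; once it is noted, the rest is routine bookkeeping of the identifications $s_g(\psi^n)=s_g(\psi)^n$ and $d=2g$.
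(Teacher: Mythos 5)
There is a genuine gap: you have proved the wrong collection of statements. What you label ``Parts (1) and (2)'' (the assertion about a group with a non--locally cyclic abelian subgroup, and the assertion about free products of surface groups) are parts (1) and (2) of Theorem~\ref{thm:ex_CSD}, not of Theorem~\ref{thm:mapping_tori}; they are irrelevant to the statement at hand. Meanwhile, parts (2) and (3) of Theorem~\ref{thm:mapping_tori} --- the case where $s_g(\psi)$ has a single eigenvalue, and the case of two eigenvalues whose ratio is not a root of unity with eigenspace dimensions summing to at least $t$ --- are never addressed. These are exactly the parts that require Theorem~\ref{thm:rkmapping}~(4) and (5) (the Amoroso--Zannier bounds with the explicit threshold $C_{2g}$): for (2) one uses $\OR(s_g(\psi)^n)=2g$ for $n\geq C_{2g}$ to get $r=\min\{2g,2g+1\}=2g$, and for (3) one uses $\OR(s_g(\psi)^n)\geq t\geq g$ for $n\geq C_{2g}$, so that Proposition~\ref{prop:mapping_torus} places $\pi_1(T_{f^n})$ in $\CC_{\Surf_{\{(g,r)\}}}$ for some $r$ with $\min\{2g,t+1\}\leq r\leq 2g$. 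None of this appears in your write-up.

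The two parts of Theorem~\ref{thm:mapping_tori} that you do treat are correct and coincide with the paper's argument: your ``Part (4)'' is part (1) of the theorem (computing $\OR(\pm I_{2g})=2g$ and feeding it to Proposition~\ref{prop:mapping_torus}; the Torelli case is immediate since $\mathcal{I}(\Sigma_g)=\Ker(s_g)$), and your ``Part (3)'' is part (4) of the theorem (applying Theorem~\ref{thm:rkmapping}~(3) with $d=4$ to get the effective absolute $n_0$, noting $s_2(\psi^n)=s_2(\psi)^n$). So the method is right where it is applied, but half of the theorem is missing and is replaced by proofs of unrelated claims.
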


\begin{proof}
We apply Proposition~\ref{prop:mapping_torus}. Item (1) follows because $\OR(I_{2g})=\OR(-I_{2g})=2g$. (In this case, we can also determine the intermediate rank directly.) Item (2) follows from Theorem~\ref{thm:rkmapping}~(4); (3) follows from Theorem~\ref{thm:rkmapping}~(5). Finally (4) can be derived from (2) and (3) of Theorem~\ref{thm:rkmapping}. Indeed, take the effective absolute constant $c>0$ as in Theorem~\ref{thm:rkmapping}~(3) and   set
\[
n_0=\lceil c\cdot  4^6(\log 4)^6\rceil.
\]
Then for every $n\in \NN$ with $n\geq n_0$, we have
\[
\rk(\ZZ^{2g}\rtimes_{s_g(\psi)^n}\ZZ)=1+\OR(s_g(\psi)^n)\geq 3;
\]
hence we obtain the conclusion.
\end{proof}

We are now ready to prove Theorem~\ref{thm:ex_CSD}.
\begin{proof}[Proof of Theorem~$\ref{thm:ex_CSD}$]
Item (1) is stated as  Proposition~\ref{prop:noZ2} and Theorem~\ref{thm:mapping_tori}~(5); (2) follows from Example~\ref{example:surface_basic} and Corollary~\ref{cor:free}. Items (3) and (4) follow from (4) and (1) of Theorem~\ref{thm:mapping_tori}, respectively.
\end{proof}

\begin{remark}\label{rem:residual}
If $J\subseteq \NN_{\geq 2}$, then the group $\Gamma=\bigast_{g\in J}\pi_1(\Sigma_g)$ is \emph{fully residually free}, that means, for every finite subset $S$ of $\Gamma$, there exist a free group $H$ and a homomorphism $\phi\colon \Gamma\to H$ such that $\phi|_S$ is injective. To see this, first recall from \cite{GBaum} that $\pi_1(\Sigma_g)$ is fully residually free for every $g\in \NN_{\geq 2}$. Then, apply results in \cite{BBaum}.
\end{remark}

\begin{remark}\label{rem:souto}
Souto \cite{Souto} showed the following: if $\psi\in \Mod(\Sigma_g)$ is a pseudo-Anosov mapping class in the setting of Proposition~\ref{prop:mapping_torus}, then there exists $n_{\psi}\in \NN$ such that for every $n\in \NN$ with $n\geq n_{\psi}$, we have $\rk(\pi_1(T_{f^n}))=2g+1$. Hence for every $n\in \NN$ with $n\geq n_{\psi}$, we have
\[
\intrk^{\pi_1(T_{f^n})}(\pi_1(\Sigma_g))=2g.
\]
\end{remark}

We finally pose the following problems, which seem open, in relation to Theorems~\ref{thm local cyclicity}, \ref{thm:abelian} and \ref{thm:surface_new}.

\begin{problem}\label{prob:exist}
\begin{enumerate}[$(1)$]
  \item Does there exist a non-abelian group $\Gamma$ such that for every pair $(G,N)$ fitting in $(\star)$, $\cl_G$ and $\cl_{G,N}$ coincide on $[G,N]$?
  \item Does there exist a non-abelian group $\Gamma$ such that for every pair $(G,N)$ fitting in \emph{split} short exact sequence $(\star)$, $\cl_G$ and $\cl_{G,N}$ coincide on $[G,N]$?
  \item Find a `good' class $\mathcal{C}$ of groups $\Gamma$ such that for every pair $(G,N)$ fitting in $(\star)$,
\[
\sup_{x\in [G,N]}(\cl_{G,N}(x)-\cl_{G}(x))<\infty
\]
holds.
 \item Find a `good' class $\mathcal{C}$ of groups $\Gamma$ such that for every pair $(G,N)$ fitting in \emph{split} short exact sequence $(\star)$,
\[
\sup_{x\in [G,N]}(\cl_{G,N}(x)-\cl_{G}(x))<\infty
\]
holds.
\end{enumerate}
\end{problem}

\section{Concluding remarks}\label{sec:remark}
\subsection{Examples from symplectic geometry}\label{subsec:symp}
In this subsection, we exhibit examples of triples $(G,N,\Gamma)$ that fit in $(\star)$ from symplectic geometry. In the first example, $\cl_G$ coincides with $\cl_{G,N}$ on $[G,N]$ and $G/N\simeq \RR$.
For basic concepts of symplectic geometry, see \cite{HZ}, \cite{MS}, and \cite{PR}.

A symplectic manifold is  said to be \textit{exact} if the symplectic form is  an exact form.
\begin{prop}\label{ham cal}
Let $(M,\omega)$  be an exact symplectic manifold.
Set $G=\Ham(M,\omega)$, where $\Ham(M,\omega)$ is the group of Hamiltonian diffeomorphisms $($with compact support$)$ of $(M,\omega)$ and $N$ the commutator subgroup $[G,G]$ of $G$.
Then,  the following hold true.
\begin{enumerate}[$(1)$]
\item $N=[G,N]$.
\item $G/N\simeq \RR$.
\item $\cl_G$ and $\cl_{G,N}$ coincide on $N$.
\end{enumerate}
\end{prop}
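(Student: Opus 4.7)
The plan is to combine Banyaga's theorem on the Calabi homomorphism with a displacement trick based on Lemma~\ref{lem:commcomp}.

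First, I would recall that for a connected exact symplectic manifold $(M,\omega)$, the \emph{Calabi homomorphism} $\Cal \colon G = \Ham(M,\omega) \to \RR$ is a well-defined surjective homomorphism, and by Banyaga's theorem its kernel coincides with $[G,G] = N$. This immediately yields $(2)$: the quotient map $\Cal$ induces an isomorphism $G/N \simeq \RR$. Note also that $M$ is necessarily non-compact: if $M$ were closed, then $\omega^n$ would be an exact top-form, contradicting that it is a volume form.

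The key technical lemma is the following \emph{displacement} observation. For any $g \in G$, the support $\supp(g)$ is compact, and so $M \setminus \supp(g)$ contains a non-empty open Darboux ball $B$. Using a standard bump-function construction on $B$, for every prescribed $c \in \RR$ one can find $x \in G$ with $\supp(x) \subset B$ and $\Cal(x) = c$. In particular, given $g, h \in G$, choose $x \in G$ supported in such a ball $B \subset M \setminus \supp(g)$ with $\Cal(x) = -\Cal(h)$. Since $\supp(g) \cap \supp(x) = \emptyset$, the elements $g$ and $x$ commute, so Lemma~\ref{lem:commcomp}~(1) gives
\[
[g,h] = [g, hx],
\]
and since $\Cal(hx) = \Cal(h) + \Cal(x) = 0$, we have $hx \in N$. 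Thus every commutator $[g,h]$ in $G$ is already a $(G,N)$-commutator.

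From this, $(1)$ follows easily: $N = [G,G]$ is generated by the commutators $[g,h]$, each of which lies in $[G,N]$ by the above, and $[G,N] \subseteq N$ is trivial. For $(3)$, given $y \in N$ with $\cl_G(y) = k$, write $y = [g_1,h_1] \cdots [g_k,h_k]$ and apply the replacement $[g_i,h_i] = [g_i, h_i x_i]$ factor by factor, with $x_i$ chosen so that $h_i x_i \in N$. This exhibits $y$ as a product of $k$ mixed commutators, giving $\cl_{G,N}(y) \le \cl_G(y)$; the reverse inequality is immediate since every mixed commutator is a commutator.

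The main obstacle is the displacement step: one needs the standard (but non-trivial) symplectic facts that $\Cal$ is a homomorphism with $\ker(\Cal) = [G,G]$, and that on any open subset of $M$ one can realize arbitrary Calabi values by compactly supported Hamiltonians. Once these are in hand, the Lemma~\ref{lem:commcomp} trick does all the work and no further group-theoretic input is needed. Note that Theorem~\ref{thm local cyclicity} does \emph{not} apply here since $\RR$ is not locally cyclic; the coincidence of $\cl_G$ and $\cl_{G,N}$ is instead forced by the abundance of symplectic displacement.
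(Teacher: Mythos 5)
Your proof is correct and follows essentially the same route as the paper: both rely on Banyaga's theorem ($\Cal$ surjective with kernel $N$) for $(2)$, and both prove $(3)$ by the same displacement trick, using the non-compactness of an exact symplectic manifold to find an element disjointly supported from $g$ with prescribed Calabi value and invoking Lemma~\ref{lem:commcomp}. The only small divergence is $(1)$: the paper cites the known perfectness of $N$ to get $N=[N,N]\leqslant[G,N]$, whereas you derive $(1)$ as a corollary of the displacement argument in $(3)$ — both are valid and equally short.
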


Here, we remark that (1) and (2) are known.  To prove Proposition \ref{ham cal}, we use the Calabi homomorphism.
For an exact symplectic manifold $(M,\omega)$, we recall that the \textit{Calabi homomorphism}
is a function $\mathrm{Cal} \colon \Ham(M,\omega)\to\mathbb{R}$ defined by
\[
	\mathrm{Cal}(\varphi_H)=\int_0^1\int_M H_t\omega^n\,dt,
\]
 where $\varphi_H$ is the Hamiltonian diffeomorphism generated by a smooth function $H \colon [0,1] \times M \to \RR$.
 It is known that the Calabi homomorphism is a well-defined  surjective group homomorphism and  that $\Ker(\Cal)=N$ (see \cite{Cala,Ban,Ban97,MS,Hum}). We also remark that $N$ is perfect (see \cite{Ban} and \cite{Ban97}).

We also note that every exact symplectic manifold is open.
Indeed, it is known that the symplectic form of a closed symplectic manifold is cohomologically non-trivial (see \cite[Section 1.1]{HZ}  and  \cite{MS}).

\begin{proof}[Proof of Proposition~$\ref{ham cal}$]
First, we prove (1). As mentioned earlier, the group $N$ is known to be perfect. Hence,
we have  $N=[N,N]\leqslant [G,N]$.
Since $N$ is a normal subgroup of $G$, we have  $[G,N]\leqslant N$. Therefore, we conclude that
\[
N=[G,N].
\]
Item (2) holds since the Calabi homomorphism is surjective and $\Ker(\Cal)=N$.

Finally, we prove (3). Let $f,g\in G$.
In what follows, we will show that $[f,g]$ is a $(G,N)$-commutator. As we mentioned above, every exact symplectic manifold is open. Hence, we can take $h\in \hG$ such that the following two conditions are fulfilled:
\begin{itemize}
  \item the support of $h$ is disjoint from that of $f$;
  \item $\mathrm{Cal}(h)=-\mathrm{Cal}(g)$.
\end{itemize}
By the first condition, $[f,g]=[f,gh]$  holds.
By the second condition,  we have
\[
\Cal(gh)=\Cal(g)+\Cal(h)=0;
\]
it implies that $gh\in N$ since $\Ker(\Cal)=N$. Therefore, every $(G,G)$-commutator is a $(G,N)$-commutator, and hence $\cl_G$ coincides with $\cl_{G,N}$ on $[G,N]=N$. This completes our proof.
\end{proof}

In the proof of Proposition~\ref{ham cal}, we use the various properties of the Calabi homomorphism.
If we consider the analogue of Proposition~\ref{ham cal} on the flux homomorphism, then the following problem seems open.

\begin{problem}\label{flux}
Let $(M,\omega)$  be a closed symplectic manifold with $\HHH^1(M;\RR)=\RR$.
Let $G$ be the identity component $\Symp_0(M,\omega)$ of the group $\Symp(M,\omega)$ of symplectomorphisms of $(M,\omega)$ and $N$ the group of Hamiltonian diffeomorphisms of $(M,\omega)$.

Then, does $\cl_G$ and $\cl_{G,N}$ coincide on $N$?
\end{problem}

We note that under the setting of Problem \ref{flux}, there exists a subgroup $\Gamma_{\omega}$ of $\HHH^1(M;\RR)$, which is called the \textit{flux group} of $(M,\omega)$ such that $G/N=\HHH^1(M;\RR)/\Gamma_{\omega}$ (\cite{Ban}, \cite{Ban97}).
We also note that $\Gamma_{\omega}$ is known to be always discrete in $\HHH^1(M;\RR)$ (\cite{O}) and that the quotient homomorphism $G\to G/N$ has a section homomorphism if $\HHH^1(M;\RR)=\RR$.

For examples of closed symplectic manifolds with $\HHH^1(M;\RR)=\RR$, see \cite{G95} and \cite{HAP}.

The second example comes from the following proposition and corollary.

\begin{prop}\label{prop:cw}
Let $\Gamma$ be a group.
Assume that the commutator width of $\Gamma$ is finite, meaning that
\[
\sup_{\gamma \in[\Gamma,\Gamma]}\cl_{\Gamma}(\gamma)<\infty.
\]
Let $n_{\Gamma}\in \ZZ_{\geq 0}$ be the quantity defined on the left hand side. Set
\begin{eqnarray}\label{eq:ab_perm}
(G,N)=(\ZZ\wr_{\rho_{\Gamma^{\ab}}}\Gamma,\bigoplus_{\Gamma^{\ab}}\ZZ),
\end{eqnarray}
where $\rho_{\Gamma^{\ab}}\colon \Gamma\curvearrowright \Gamma^{\ab}$ is the composition of $\Ab_{\Gamma}$ and $\Gamma^{\ab}\curvearrowright \Gamma^{\ab}$ by left multiplication. Then for every $x\in [G,N]$, we have
\[
\left\lceil \frac{\cl_{G,N}(x)}{2}\right\rceil\leq  \cl_{G}(x)\leq \left\lceil \frac{\cl_{G,N}(x)}{2}\right\rceil+n_{\Gamma}.
\]
\end{prop}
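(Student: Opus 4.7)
The plan is to treat the two inequalities separately. For the lower bound, I will invoke Theorem~\ref{thm:split}: the short exact sequence $1\to N\to G\to \Gamma\to 1$ associated with the permutational wreath product~\eqref{eq:ab_perm} splits, and $N=\bigoplus_{\Gamma^{\ab}}\ZZ$ is abelian, so $\cl_{G,N}(x)\leq 2\cl_G(x)$ for every $x\in [G,N]$. Since $\cl_G(x)$ is a non-negative integer, this yields $\cl_G(x)\geq \lceil \cl_{G,N}(x)/2\rceil$.

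For the upper bound, set $k=\cl_{G,N}(x)$. As in the proof of Proposition~\ref{prop:wreath_perm}, each mixed commutator has the form $[(v_i,\alpha_i),(w_i,e_\Gamma)]=(\sigma(\alpha_i)w_i-w_i,e_\Gamma)$ where $\sigma=\Ab_\Gamma$, so one can write $x=\prod_{i=1}^k(\sigma(\alpha_i)w_i-w_i,e_\Gamma)$. Since $Q=\Gamma^{\ab}$ is abelian, for all $v_1,v_2\in N$ and $\gamma_1,\gamma_2\in \Gamma$ we have
\[
[(v_1,\gamma_1),(v_2,\gamma_2)]=(\sigma(\gamma_1)v_2-v_2+v_1-\sigma(\gamma_2)v_1,\,[\gamma_1,\gamma_2])
\]
in $G$, exactly as in Proposition~\ref{prop:surface_key}. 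Substituting $\gamma_1=\alpha_{2j-1}$, $v_2=w_{2j-1}$, $\gamma_2=\alpha_{2j}$, $v_1=-w_{2j}$, the first coordinates of the $(2j-1)$-th and $2j$-th mixed commutators are packaged into the first coordinate of a single $G$-commutator $C_j$, whose $\Gamma$-part is $\beta_j:=[\alpha_{2j-1},\alpha_{2j}]\in [\Gamma,\Gamma]$. Pairing in this way (and leaving one mixed commutator unpaired as a $G$-commutator with $\Gamma$-part $e_\Gamma$ if $k$ is odd) produces $m=\lceil k/2\rceil$ commutators $C_1,\ldots,C_m$ in $G$ whose $\Gamma$-parts all lie in $[\Gamma,\Gamma]$.

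The main obstacle, and the essential reason the permutational wreath product uses $\Gamma^{\ab}$ rather than $\Gamma$, is the semidirect-product twist: the product $C_1\cdots C_m$ has first coordinate $y_1+\beta_1 y_2+\beta_1\beta_2 y_3+\cdots$, where $y_j$ is the first coordinate of $C_j$. The crucial observation is that every partial product $\beta_1\cdots\beta_l$ lies in $[\Gamma,\Gamma]=\Ker(\sigma)$, and $\Gamma$ acts on $N$ through $\sigma$; hence every such partial product acts trivially on $N$. Consequently the first coordinates of $C_1,\ldots,C_m$ simply add in $N$, and their sum equals the first coordinate of $x$, so $C_1\cdots C_m=x\cdot(0,\eta)$ with $\eta=\beta_1\cdots\beta_m\in [\Gamma,\Gamma]$.

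Finally, I will use the commutator width bound $\cl_\Gamma(\eta^{-1})\leq n_\Gamma$ to write $\eta^{-1}=[\delta_1,\delta'_1]\cdots[\delta_{n_\Gamma},\delta'_{n_\Gamma}]$ in $\Gamma$ and lift each factor to the $G$-commutator $[(0,\delta_l),(0,\delta'_l)]=(0,[\delta_l,\delta'_l])$ via the displayed identity with $v_1=v_2=0$. The product of these $n_\Gamma$ lifts equals $(0,\eta^{-1})$ by the same triviality of the $\Ker(\sigma)$-action on $N$, so appending them to $C_1\cdots C_m$ expresses $x$ as a product of $\lceil \cl_{G,N}(x)/2\rceil+n_\Gamma$ commutators in $G$, completing the upper bound.
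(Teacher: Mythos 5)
Your proposal is correct and follows essentially the same route as the paper: you use Theorem~\ref{thm:split} for the lower bound, and for the upper bound you pair the mixed commutators into $\lceil r/2\rceil$ honest $G$-commutators (with the same substitution $[(-w_{2j},\alpha_{2j-1}),(w_{2j-1},\alpha_{2j})]$), observe that the $\Gamma$-parts all lie in $\Ker(\sigma)=[\Gamma,\Gamma]$ and hence act trivially on $N$, and then absorb the residual $\Gamma$-part by $n_\Gamma$ further commutators of the form $[(0,\delta_l),(0,\delta'_l)]$. The only cosmetic difference is that you handle the odd and even cases of $r$ uniformly via $\lceil r/2\rceil$, whereas the paper treats them separately.
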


\begin{cor}\label{cor:cw}
Let $\Gamma$ be a group. Assume that
\[
n_{\Gamma}=\sup_{\gamma \in[\Gamma,\Gamma]}\cl_{\Gamma}(\gamma)<\infty \quad \textrm{and}\quad \sperk(\Gamma^{\ab})=\infty.
\]
Let $(G,N)$ be the pair defined by $\eqref{eq:ab_perm}$. Then, we have
\[
\sup_{x\in [G,N]}(\cl_{G,N}(x)-C\cdot \cl_{G}(x))=\infty
\]
for every real number $C < 2$ but
\[
\sup_{x\in [G,N]}(2\cl_{G}(x)-\cl_{G,N}(x))\leq 2n_{\Gamma}+1.
\]
\end{cor}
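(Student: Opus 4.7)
The plan is to deduce both parts of Corollary~\ref{cor:cw} directly from Proposition~\ref{prop:cw}, invoking Proposition~\ref{prop:wreath_perm} only to guarantee the existence of elements realizing every possible value of $\cl_{G,N}$. No substantial obstacle is expected: once those two results are in hand, the corollary reduces to elementary arithmetic with the ceiling function; the most delicate point is merely matching the pair $(G,N)$ in \eqref{eq:ab_perm} to the hypotheses of Proposition~\ref{prop:wreath_perm}.

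First I would dispatch the easy direction, the bound $\sup_{x\in[G,N]}(2\cl_G(x)-\cl_{G,N}(x))\leq 2n_\Gamma+1$. Here I would combine the upper estimate $\cl_G(x)\leq \lceil \cl_{G,N}(x)/2\rceil + n_\Gamma$ of Proposition~\ref{prop:cw} with the elementary inequality $2\lceil a/2\rceil \leq a+1$ valid for every $a\in\ZZ_{\geq 0}$. Substituting $a=\cl_{G,N}(x)$ yields the claim immediately.

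Next, for the unboundedness statement, the first step is to exhibit, for every $r\in\NN$, an element $x_r\in[G,N]$ satisfying $\cl_{G,N}(x_r)=r$. Since $\Gamma^{\ab}$ is abelian, Lemma~\ref{lem:abel} gives $\genrk(\Gamma^{\ab})=\sperk(\Gamma^{\ab})=\infty$. The pair $(G,N)$ defined in \eqref{eq:ab_perm} is precisely the setting of Proposition~\ref{prop:wreath_perm} with $Q=\Gamma^{\ab}$ and quotient map $\sigma=\Ab_\Gamma$, so that proposition produces the required sequence $(x_r)_{r\in\NN}$.

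Finally, I would invoke the upper estimate of Proposition~\ref{prop:cw} once more to obtain
\[
\cl_{G,N}(x_r)-C\cdot\cl_G(x_r)\;\geq\; r - C\left(\left\lceil\frac{r}{2}\right\rceil + n_\Gamma\right)\;\geq\; \left(1-\frac{C}{2}\right)r - \frac{C}{2} - C n_\Gamma.
\]
For any fixed real number $C<2$ the coefficient $1-C/2$ of $r$ is strictly positive, so the right-hand side tends to $+\infty$ as $r\to\infty$; this yields the desired unboundedness and completes the proof.
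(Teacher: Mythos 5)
Your proof is correct and takes the same route as the paper, whose own proof simply cites Proposition~\ref{prop:wreath_perm}, Proposition~\ref{prop:cw} and Lemma~\ref{lem:abel}; you are just making the arithmetic explicit. One small caveat: the chain
\[
\cl_{G,N}(x_r)-C\cdot\cl_G(x_r)\;\geq\; r - C\Big(\Big\lceil\tfrac{r}{2}\Big\rceil + n_\Gamma\Big)\;\geq\; \Big(1-\tfrac{C}{2}\Big)r - \tfrac{C}{2} - C n_\Gamma
\]
uses the upper bound $\cl_G(x_r)\le\lceil r/2\rceil+n_\Gamma$ and $\lceil r/2\rceil\le(r+1)/2$ multiplied through by $-C$, and both steps preserve the direction of the inequality only when $C\ge 0$. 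For $C<0$ the conclusion is immediate anyway, since $\cl_{G,N}(x_r)-C\cl_G(x_r)\ge\cl_{G,N}(x_r)=r\to\infty$; it would be cleaner to note that case separately (or restrict the estimate to $0\le C<2$ and observe that the claim for $C<0$ follows a fortiori).
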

 We note that Propositions~\ref{prop:wreath_perm} and \ref{prop:cw} recover Theorem~\ref{thm:abelian}. Indeed, if $\Gamma$ is abelian, then $n_{\Gamma}=0$ and $\Gamma^{\ab}=\Gamma$ hold.

Symplectic geometry supplies the following interesting example to which Corollary~\ref{cor:cw} applies.

\begin{example}\label{ex:symp}
Let $(M,\omega)$ be an exact symplectic manifold.
Set $\Gamma=\Ham(M\times \RR^{2},\mathrm{pr_1}^\ast\omega+\mathrm{pr_2}^\ast\omega_{0})$, where $\mathrm{pr_1}\colon M\times\RR^2 \to M$, $\mathrm{pr_2}\colon M\times\RR^2 \to \RR^2$ are the first, second projection, respectively and $\omega_{0}$ is the standard symplectic form on $\RR^{2}$.
Then this $\Gamma$ satisfies that
\begin{equation}\label{r2n ham}
\sup_{\gamma\in [\Gamma,\Gamma]}\cl_{\Gamma}(\gamma)\leq 2 \quad \textrm{and}\quad \sperk(\Gamma^{\ab})=\infty.
\end{equation}
Indeed, the former assertion follows from the work of Burago--Ivanov--Polterovich \cite[Corollary~2.3]{BIP}; the latter holds since $\Gamma^{\ab}\simeq \RR$.
Here, note that $(M\times \RR^{2},\mathrm{pr_1}^\ast\omega+\mathrm{pr_2}^\ast\omega_{0})$ is an exact symplectic manifold and hence Proposition \ref{ham cal}~(2) applies.

In particular, $\Gamma=\Ham(\RR^{2n},\omega_{0})$ for every $n\geq 1$ satisfies \eqref{r2n ham}. Here, $\omega_{0}$ is the standard symplectic form on $\RR^{2n}$.
\end{example}

\begin{proof}[Proofs of Proposition~$\ref{prop:cw}$ and Corollary~$\ref{cor:cw}$]
First, we prove Proposition~\ref{prop:cw}. Let $x\in [G,N]$ be a non-trivial element and set $r=\cl_{G,N}(x)\in \NN$. Then, by \eqref{eq:comm}, there exist $\gamma_1,\ldots ,\gamma_r\in \Gamma$ and $w_1,\ldots ,w_r\in N$ such that
\[
x=\sum_{i=1}^r (\Ab_{\Gamma}(\gamma_i)w_i-w_i).
\]
First, we treat the case where $r$ is even. Since $\Gamma^{\ab}$ is abelian, we then have
\[
(x,\xi)=[(-w_2,\gamma_1),(w_1,\gamma_2)][(-w_4,\gamma_3),(w_3,\gamma_4)]\cdots [(-w_r,\gamma_{r-1}),(w_{r-1},\gamma_{r})],
\]
where $\xi$ is defined by
\[
\xi=[\gamma_1,\gamma_2][\gamma_3,\gamma_4]\cdots[\gamma_{r-1},\gamma_r]\in [\Gamma,\Gamma].
\]
By assumption, $\xi^{-1}$ may be written as the product of at most $n_{\Gamma}$ single commutators. This means, there exist an integer $k\leq  n_{\Gamma} $, elements $\lambda_1,\ldots ,\lambda_k\in \Gamma$ and $\lambda'_1,\ldots ,\lambda'_k\in \Gamma$ such that
\[
\xi^{-1}=[\lambda_1,\lambda'_1][\lambda_2,\lambda'_2]\cdots[\lambda_k,\lambda'_k].
\]
Therefore, we have
\[
x=[(-w_2,\gamma_1),(w_1,\gamma_2)]\cdots [(-w_r,\gamma_{r-1}),(w_{r-1},\gamma_{r})][(0,\lambda_1),(0,\lambda'_1)]\cdots[(0,\lambda_k),(0,\lambda'_k)]
\]
and
\[
\cl_{G}(x)\leq \frac{r}{2}+k\leq \frac{r}{2}+n_{\Gamma}.
\]
For the case where $r$ is odd, a similar argument to one above shows that
\[
\cl_{G}(x)\leq \frac{r+1}{2}+n_{\Gamma}.
\]
By combining these two inequalities with Theorem~\ref{thm:split}, we obtain the conclusion of Proposition~\ref{prop:cw}.

Finally, we prove Corollary~\ref{cor:cw}: it immediately follows from Proposition~\ref{prop:wreath_perm}, Proposition~\ref{prop:cw} and Lemma~\ref{lem:abel}.
\end{proof}

\subsection{Examples of groups of finite general rank}\label{subsec:gen_rk}
For a group $\Gamma$ that may not be finitely generated, we have two notions of ranks due to Malcev: the \emph{general rank} $\genrk(\Gamma)$ (Definition~\ref{def:int_rk}) and the \emph{special rank} $\sperk(\Gamma)$ (Definition~\ref{def:local_rank}). For abelian $\Gamma$, these two coincide (Lemma~\ref{lem:abel}). However, $\genrk(\Gamma)$ can be much smaller than $\sperk(\Gamma)$ in general. For instance, $\Gamma=F_2$, we have
\[
\genrk(F_2)=2\quad \textrm{and}\quad \sperk(F_2)=\infty.
\]
Here for $n\in \NN$, $F_n$ denotes the free group of rank $n$. To see the latter, we observe that $F_n$ embeds into $F_2$ for every $n\in \NN$ (to see the former, see Example~\ref{ex:fg}). Here we list basic properties of general ranks and exhibit some examples of groups of finite general rank. The contents in this subsection might be known to the experts on general ranks; nevertheless, we include the proofs for the sake of convenience. We refer the reader to \cite{Azarov17} for study of general ranks.

\begin{example}\label{ex:spe}
By definition, we have  $\genrk(\Gamma)\leq \sperk(\Gamma)$ for every group $\Gamma$. Hence every group $\Gamma$ of finite special rank is of finite general rank. Groups of finite special rank have been studied by various researchers; see \cite{DKS}.
\end{example}

\begin{example}\label{ex:fg}
Let $\Gamma$ be a finitely generated group. Then we have
\[
\genrk(\Gamma)=\rk(\Gamma)<\infty.
\]
To see this, note that $\intrk^{\Gamma}(\Gamma)=\rk(\Gamma)$ and $\intrk^{\Gamma}(\Lambda)\leq \rk(\Gamma)$ for every subgroup $\Lambda$ of $\Gamma$.
\end{example}

\begin{remark}\label{rem:cyc}
We recall from the proof of Proposition~\ref{prop:noZ2} that for a group $\Gamma$ and its finitely generated subgroup $\Lambda$, $\intrk^{\Gamma}(\Lambda)\leq 1$ holds if and only if $\Lambda$ is cyclic. Therefore, $\Gamma$ is of general rank at least $1$ if and only if $\Gamma$ is locally cyclic.
\end{remark}

From the viewpoints of Examples~\ref{ex:spe} and \ref{ex:fg}, in what follows, we seek for groups of finite general ranks such that they are of infinite special rank and that they are not finitely generated. First, we discuss some permanence properties of having finite general ranks.

\begin{prop}[stability under  taking group quotients]\label{prop:quot}
Let $\Gamma$ be a group and $Q$ be a group quotient of $\Gamma$. Then we have
\[
\genrk(\Gamma)\geq \genrk(Q).
\]
In particular, we have
\[
\genrk(\Gamma)\geq \sperk(\Gamma^{\ab}).
\]
\end{prop}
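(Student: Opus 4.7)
The plan is to prove the inequality directly from the definitions. Let $\pi\colon \Gamma \twoheadrightarrow Q$ be the quotient map. I would start by taking an arbitrary finitely generated subgroup $\Lambda_Q$ of $Q$ and showing the bound $\intrk^{Q}(\Lambda_Q) \leq \genrk(\Gamma)$; then taking the supremum over all such $\Lambda_Q$ yields $\genrk(Q) \leq \genrk(\Gamma)$.

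To bound $\intrk^{Q}(\Lambda_Q)$, first pick a finite generating set $q_1, \ldots, q_k$ of $\Lambda_Q$ and lift it to $\gamma_1, \ldots, \gamma_k \in \Gamma$ via $\pi$; set $\Lambda = \langle \gamma_1, \ldots, \gamma_k \rangle$, which is a finitely generated subgroup of $\Gamma$ with $\pi(\Lambda) = \Lambda_Q$. The key observation is that for any intermediate subgroup $\Theta$ with $\Lambda \leqslant \Theta \leqslant \Gamma$, the image $\pi(\Theta)$ satisfies $\Lambda_Q = \pi(\Lambda) \leqslant \pi(\Theta) \leqslant Q$ and $\rk(\pi(\Theta)) \leq \rk(\Theta)$, because any generating set of $\Theta$ maps to a generating set of $\pi(\Theta)$. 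Taking the infimum over $\Theta$ therefore gives $\intrk^{Q}(\Lambda_Q) \leq \intrk^{\Gamma}(\Lambda)$, and by definition of the general rank of $\Gamma$, the right-hand side is at most $\genrk(\Gamma)$.

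For the ``in particular'' assertion, I would apply the just-proved inequality to the quotient $Q = \Gamma^{\ab}$, obtaining $\genrk(\Gamma) \geq \genrk(\Gamma^{\ab})$, and then invoke Lemma~\ref{lem:abel}, which asserts $\genrk(\Gamma^{\ab}) = \sperk(\Gamma^{\ab})$ since $\Gamma^{\ab}$ is abelian.

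I do not foresee a genuine obstacle here: the argument is a routine unwinding of the definitions, relying only on the elementary fact that a surjective group homomorphism does not increase rank. The only mild caveat is to check that the supremum/infimum comparisons behave correctly when values are $\infty$, but since $\Lambda_Q$ is finitely generated and $\rk(\pi(\Theta)) \leq \rk(\Theta)$ always holds in $\ZZ_{\geq 0} \cup \{\infty\}$, the inequalities pass through without incident.
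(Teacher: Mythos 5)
Your proof is correct and follows essentially the same route as the paper: the key step you verify inline, namely that $\rk(\pi(\Theta)) \leq \rk(\Theta)$ and hence $\intrk^{Q}(\pi(\Lambda)) \leq \intrk^{\Gamma}(\Lambda)$, is exactly the content of the paper's Lemma~\ref{lem:quotient}, which the paper simply cites. The ``in particular'' part is handled identically via Lemma~\ref{lem:abel}.
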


\begin{prop}[stability under  inductive limits]\label{prop:ind}
Let $(\Gamma_i,\iota_{ij})$ be an injective system of groups, namely, $\iota_{ij}\colon \Gamma_i\hookrightarrow \Gamma_{j}$ is an injective group homomorphism for every $i,j\in \NN$ with $i\leq j$ such that $\iota_{ii}=\mathrm{id}_{\Gamma_i}$ for every $i\in \NN$ and $\iota_{jk}\circ \iota_{ij}=\iota_{ik}$ for every $i,j,k\in\NN$ with $i\leq j\leq k$. Let $\Gamma=\varinjlim \Gamma_i$ be the inductive limit of $(\Gamma_i,\iota_{ij})$. Then we have
\[
\genrk(\Gamma)\leq \liminf_{i\to \infty} \genrk(\Gamma_i).
\]

In particular, if $\Gamma_1\leqslant\Gamma_2\leqslant\Gamma_3\cdots\leqslant \Gamma_i\leqslant \Gamma_{i+1}\leqslant \cdots$ is an increasing sequence of groups, then we have
\begin{eqnarray}\label{eq:ind_ineq}
\genrk(\bigcup_{i\in \NN}\Gamma_i)\leq \liminf_{i\to \infty} \genrk(\Gamma_i).
\end{eqnarray}
\end{prop}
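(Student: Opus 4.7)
The plan is to verify $\genrk(\Gamma) \leq \liminf_{i\to\infty}\genrk(\Gamma_i)$ term by term in the defining supremum. Concretely, I would fix an arbitrary finitely generated subgroup $\Lambda \leqslant \Gamma$ and aim to show
\[
\intrk^{\Gamma}(\Lambda) \leq \genrk(\Gamma_i) \quad \text{for all sufficiently large } i,
\]
from which the conclusion follows by taking the liminf in $i$ and then the supremum over all such $\Lambda$.

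First I would record the standard structural fact about injective inductive systems: since every $\iota_{ij}$ is injective and the indexing set $\NN$ is directed, each canonical map $\iota_i\colon \Gamma_i \to \Gamma$ is injective and $\Gamma = \bigcup_{i\in\NN}\iota_i(\Gamma_i)$. Writing $\Lambda = \langle x_1, \ldots, x_n\rangle$, each generator $x_k$ lies in $\iota_{i_k}(\Gamma_{i_k})$ for some $i_k$; setting $i_0 = \max_{1\le k\le n} i_k$ gives $\Lambda \leqslant \iota_{i_0}(\Gamma_{i_0})$, so for every $i \geq i_0$ the preimage $\Lambda_i := \iota_i^{-1}(\Lambda)$ is a finitely generated subgroup of $\Gamma_i$ mapping isomorphically onto $\Lambda$.

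Next, for each $i \geq i_0$, finite generation of $\Lambda_i$ forces $\intrk^{\Gamma_i}(\Lambda_i) \leq \rk(\Lambda_i) < \infty$; being the infimum of a non-empty set of nonnegative integers, this intermediate rank is attained. Pick $\Theta_i$ with $\Lambda_i \leqslant \Theta_i \leqslant \Gamma_i$ and $\rk(\Theta_i) = \intrk^{\Gamma_i}(\Lambda_i) \leq \genrk(\Gamma_i)$. Because $\iota_i$ is injective, $\iota_i(\Theta_i) \leqslant \Gamma$ is isomorphic to $\Theta_i$, contains $\Lambda$, and has the same rank as $\Theta_i$. Consequently
\[
\intrk^{\Gamma}(\Lambda) \leq \rk\bigl(\iota_i(\Theta_i)\bigr) = \rk(\Theta_i) \leq \genrk(\Gamma_i),
\]
valid for every $i \geq i_0$; passing to the liminf and then the supremum over $\Lambda$ yields the first inequality. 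The ``in particular'' statement is immediate, since an increasing union of groups is literally the inductive limit of the inclusion system.

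There is no serious obstacle here: the only technical point worth emphasizing is the bookkeeping that identifies a finitely generated subgroup of the limit with a lift to some $\Gamma_i$, which rests on injectivity of the transition maps together with directedness of $\NN$. Every other step is essentially forced by the definitions of $\genrk$ and $\intrk^{\Gamma}$.
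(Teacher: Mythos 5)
Your proof is correct and follows the same idea as the paper's (very brief) argument: every finitely generated subgroup of $\Gamma$ is contained in the image of some $\Gamma_{i_0}$, and one pushes a witnessing intermediate group forward via the injective canonical map without changing its rank. You have simply spelled out the bookkeeping that the paper leaves implicit.
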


In general, the equality does \emph{not} hold in $\eqref{eq:ind_ineq}$; see Example~\ref{example not equal}.

\begin{prop}[stability under extensions]\label{prop:ext_rk}
Assume that
\[
1 \longrightarrow N \longrightarrow G \longrightarrow \Gamma \longrightarrow 1
\]
is a short exact sequence of groups. Then we have
\begin{eqnarray}\label{eq:loc_intrk_ineq}
\genrk(G)\leq \genrk(N)+\genrk(\Gamma).
\end{eqnarray}
\end{prop}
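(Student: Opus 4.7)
The plan is to bound $\intrk^G(\Lambda)$ for an arbitrary finitely generated subgroup $\Lambda\leqslant G$ by exhibiting an ambient subgroup $\Theta_G\leqslant G$ containing $\Lambda$ whose generating set splits into a ``$\Gamma$-part'' of size at most $\genrk(\Gamma)$ and an ``$N$-part'' of size at most $\genrk(N)$. We may assume both $\genrk(N)$ and $\genrk(\Gamma)$ are finite, as otherwise \eqref{eq:loc_intrk_ineq} is trivial.

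Let $q\colon G\twoheadrightarrow\Gamma$ denote the quotient map. First I would consider $\bar\Lambda:=q(\Lambda)$, which is a finitely generated subgroup of $\Gamma$. By definition of $\genrk(\Gamma)$ and since the infimum defining $\intrk^\Gamma(\bar\Lambda)$ is taken over integers, I can pick a subgroup $\Theta_\Gamma$ with $\bar\Lambda\leqslant\Theta_\Gamma\leqslant\Gamma$ and a generating set $\theta_1,\ldots,\theta_s$ of $\Theta_\Gamma$ of size $s=\rk(\Theta_\Gamma)\leq \genrk(\Gamma)$. Then lift each $\theta_i$ to some $\tilde\theta_i\in G$ with $q(\tilde\theta_i)=\theta_i$.

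The key step, and the one where the main subtlety lies, is to produce finitely many elements of $N$ capturing the ``$N$-part'' of $\Lambda$. One cannot simply invoke $\genrk(N)$ on $\Lambda\cap N$, because this intersection need not be finitely generated. The trick is to work with generators: fix generators $\lambda_1,\ldots,\lambda_k$ of $\Lambda$, write each $q(\lambda_i)$ as a word $w_i$ in $\theta_1^{\pm 1},\ldots,\theta_s^{\pm 1}$, let $\tilde w_i$ denote the corresponding word in $\tilde\theta_1^{\pm 1},\ldots,\tilde\theta_s^{\pm 1}$, and set $n_i:=\lambda_i\tilde w_i^{-1}\in N$. Then $N_\Lambda:=\langle n_1,\ldots,n_k\rangle$ is a finitely generated subgroup of $N$, so there exists $\Theta_N$ with $N_\Lambda\leqslant\Theta_N\leqslant N$ and $\rk(\Theta_N)\leq\genrk(N)$.

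Finally I would set
\[
\Theta_G:=\langle \Theta_N\cup\{\tilde\theta_1,\ldots,\tilde\theta_s\}\rangle\leqslant G.
\]
Since $\lambda_i=n_i\tilde w_i\in\Theta_G$ for every $i$, we have $\Lambda\leqslant\Theta_G$. Combining a generating set of $\Theta_N$ of size $\rk(\Theta_N)\leq\genrk(N)$ with $\tilde\theta_1,\ldots,\tilde\theta_s$ yields a generating set of $\Theta_G$ of size at most $\genrk(N)+\genrk(\Gamma)$, so $\intrk^G(\Lambda)\leq\rk(\Theta_G)\leq\genrk(N)+\genrk(\Gamma)$. Taking the supremum over finitely generated $\Lambda\leqslant G$ gives \eqref{eq:loc_intrk_ineq}.
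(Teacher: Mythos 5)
Your proof is correct and follows essentially the same approach as the paper: both reduce a finitely generated subgroup of $G$ modulo $N$, choose a low-rank intermediate subgroup $\Theta_\Gamma$ of $\Gamma$ containing its image, lift generators of $\Theta_\Gamma$ to $G$, subtract off lifted words to land in $N$, and then apply $\genrk(N)$ to the resulting finitely generated subgroup of $N$. You have made explicit the ``by construction'' step where the paper asserts the existence of the elements $f_j$ of the lifted subgroup with $q(g_j f_j^{-1}) = e_\Gamma$; your observation that one cannot simply apply $\genrk(N)$ to $\Lambda \cap N$ (since it need not be finitely generated) correctly identifies the subtlety that this step is designed to sidestep.
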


\begin{prop}[stability under taking overgroups and subgroups of finite indices]\label{prop:subgrp}
Let $\Gamma$ be a group and $\Lambda$ its subgroup. Assume that $\Gamma$ is non-trivial. Then we have
\begin{eqnarray}\label{eq:over}
\genrk(\Gamma)\leq \genrk(\Lambda)+[\Gamma:\Lambda]-1
\end{eqnarray}
and that
\begin{eqnarray}\label{eq:sub}
\genrk(\Lambda)\leq [\Gamma:\Lambda]\cdot (\genrk(\Gamma)-1)+1.
\end{eqnarray}
Here, $[\Gamma:\Lambda]$ denotes $\#(\Gamma/\Lambda)$, the \emph{index} of $\Lambda$ in $\Gamma$.
\end{prop}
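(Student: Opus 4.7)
The plan is to establish both inequalities using the \emph{Schreier rank formula}: if a group $G$ is generated by $k$ elements and $H\leqslant G$ has finite index $m$, then $H$ can be generated by at most $m(k-1)+1$ elements (this extends from the Nielsen--Schreier theorem for free groups to arbitrary $k$-generator groups by lifting generators through a surjection $F_k\twoheadrightarrow G$). In either inequality, if $[\Gamma:\Lambda]$ or the general rank on the right-hand side is infinite the bound is vacuous, so I will assume $n:=[\Gamma:\Lambda]<\infty$ and that the pertinent general rank is finite.

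For \eqref{eq:sub}, let $K$ be a finitely generated subgroup of $\Lambda$; view it as a subgroup of $\Gamma$. Since $\genrk(\Gamma)<\infty$, the infimum defining $\intrk^{\Gamma}(K)$ is attained: choose $\Theta$ with $K\leqslant\Theta\leqslant\Gamma$ and $\rk(\Theta)\leq\genrk(\Gamma)$. The subgroup $\Theta\cap\Lambda$ of $\Lambda$ contains $K$, and $[\Theta:\Theta\cap\Lambda]\leq n$ because $\Theta/(\Theta\cap\Lambda)$ injects into $\Gamma/\Lambda$. Applying Schreier's formula to the pair $\Theta\cap\Lambda\leqslant\Theta$ gives
\[
\intrk^{\Lambda}(K)\leq \rk(\Theta\cap\Lambda)\leq n\bigl(\rk(\Theta)-1\bigr)+1\leq n\bigl(\genrk(\Gamma)-1\bigr)+1,
\]
and taking the supremum over all finitely generated $K\leqslant\Lambda$ yields \eqref{eq:sub}.

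For \eqref{eq:over}, let $\Theta$ be a finitely generated subgroup of $\Gamma$ and set $\Theta_0:=\Theta\cap\Lambda$. Then $m:=[\Theta:\Theta_0]\leq n$, and Schreier's formula ensures that $\Theta_0$ is finitely generated, so $\intrk^{\Lambda}(\Theta_0)\leq\genrk(\Lambda)$. Pick $L$ with $\Theta_0\leqslant L\leqslant\Lambda$ and $\rk(L)\leq\genrk(\Lambda)$, and choose coset representatives $\sigma_1=e_{\Gamma},\sigma_2,\ldots,\sigma_m$ of $\Theta_0$ in $\Theta$. Since every element of $\Theta$ has the form $\theta_0\sigma_i$ with $\theta_0\in\Theta_0$, one has $\Theta=\langle\Theta_0,\sigma_2,\ldots,\sigma_m\rangle$, and hence the subgroup $\Theta':=\langle L,\sigma_2,\ldots,\sigma_m\rangle\leqslant\Gamma$ contains $\Theta$ and satisfies $\rk(\Theta')\leq\rk(L)+(m-1)\leq\genrk(\Lambda)+n-1$. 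Therefore $\intrk^{\Gamma}(\Theta)\leq\genrk(\Lambda)+n-1$, and taking the supremum yields \eqref{eq:over}.

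There is no substantive obstacle beyond correctly invoking the Schreier bound and keeping track of which group a subgroup is being viewed inside. The hypothesis that $\Gamma$ is non-trivial is used only to guarantee $\genrk(\Gamma)\geq 1$, so that the right-hand side of \eqref{eq:sub} remains non-negative.
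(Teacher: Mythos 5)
Your proof is correct, and the argument for \eqref{eq:sub} is essentially identical to the paper's: intersect the witnessing subgroup $\Theta$ with $\Lambda$, observe $[\Theta:\Theta\cap\Lambda]\leq[\Gamma:\Lambda]$, and apply the Schreier rank bound. For \eqref{eq:over}, your route differs slightly from the paper's. The paper fixes a transversal $\{s_1=e_\Gamma,\ldots,s_l\}$ of $\Gamma/\Lambda$, replaces each generator $\xi_i$ of the given finitely generated subgroup $\Xi$ by $s_{j_i}^{-1}\xi_i\in\Lambda$, and works with the (visibly finitely generated) subgroup $H=\langle s_{j_i}^{-1}\xi_i\rangle\leqslant\Lambda$; this avoids any use of Schreier in that half of the argument. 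You instead take $\Theta_0=\Theta\cap\Lambda$ and invoke Schreier to know $\Theta_0$ is finitely generated before applying $\genrk(\Lambda)$ to it, then adjoin a transversal of $\Theta_0$ in $\Theta$ (of size $m\leq[\Gamma:\Lambda]$). Both are sound and give the same bound $\genrk(\Lambda)+[\Gamma:\Lambda]-1$; the paper's version is marginally more elementary since it does not need finite generation of an intersection, while yours is a cleaner mirror image of the argument for \eqref{eq:sub}.
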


In Proposition~\ref{prop:subgrp}, if $\Lambda$ is normal in $\Gamma$, then \eqref{eq:loc_intrk_ineq} provides a better bound
\[
\genrk(\Gamma)\leq \genrk(\Lambda)+\genrk(\Gamma/\Lambda)
\]
than \eqref{eq:over}.

\begin{prop}[stability under wreath products]\label{prop:wreath_rk}
Let $H$ and $\Gamma$ be groups. Then we have
\begin{eqnarray}\label{eq:loc_intrk_ineq_wreath}
\genrk(H\wr \Gamma)\leq \genrk(H)+\genrk(\Gamma).
\end{eqnarray}
\end{prop}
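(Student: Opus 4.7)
The plan is to show that every finitely generated subgroup $\Lambda$ of $G=H\wr \Gamma$ is contained in a ``sub-wreath-product'' of the form $\Theta_H\wr \Theta_\Gamma$ for suitable finitely generated $\Theta_H\leqslant H$ and $\Theta_\Gamma\leqslant \Gamma$, and then to exploit the standard fact that such a wreath product can be generated by $\rk(\Theta_H)+\rk(\Theta_\Gamma)$ elements.

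First I would assume both $\genrk(H)$ and $\genrk(\Gamma)$ are finite (otherwise the bound is trivial) and fix generators $(v_1,\gamma_1),\ldots ,(v_m,\gamma_m)$ of an arbitrary finitely generated $\Lambda\leqslant G$. Since each $v_i\in \bigoplus_{\Gamma}H$ has finite support, the set
\[
F=\{\gamma_1,\ldots ,\gamma_m\}\cup \bigcup_{i=1}^m\supp(v_i)\subseteq \Gamma
\]
is finite, and the subgroup $H_{\ast}=\langle v_i(\gamma)\mid 1\leq i\leq m,\ \gamma\in \supp(v_i)\rangle$ of $H$ is finitely generated. The subtlety to flag here is that the natural finitely generated subgroup of $\Gamma$ attached to $\Lambda$ must contain not only the $\Gamma$-components $\gamma_i$ but also the supports of the base components $v_i$; overlooking this is the main pitfall.

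Next, because the infimum in the definition of $\intrk$ is attained whenever it is finite, I would choose subgroups $\Theta_H\leqslant H$ and $\Theta_\Gamma\leqslant \Gamma$ such that $H_{\ast}\leqslant \Theta_H$, $\langle F\rangle \leqslant \Theta_\Gamma$, $\rk(\Theta_H)\leq \genrk(H)$, and $\rk(\Theta_\Gamma)\leq \genrk(\Gamma)$. Set $\Theta=\Theta_H\wr \Theta_\Gamma$, realized inside $H\wr \Gamma$ as the collection of pairs $(u,\theta)$ with $\theta\in \Theta_\Gamma$, $\supp(u)\subseteq \Theta_\Gamma$, and $u(\Theta_\Gamma)\subseteq \Theta_H$. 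By construction every $(v_i,\gamma_i)$ lies in $\Theta$, hence $\Lambda\leqslant \Theta$.

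Finally I would verify that, for any generating sets $\{\theta_1,\ldots ,\theta_s\}$ of $\Theta_\Gamma$ and $\{h_1,\ldots ,h_r\}$ of $\Theta_H$, the elements $(0,\theta_j)$ together with the elements $(u_k,e_\Gamma)$, where $u_k\in \bigoplus_{\Theta_\Gamma}\Theta_H$ is the function taking value $h_k$ at $e_\Gamma$ and the identity elsewhere, generate $\Theta$; this is immediate once one notes that conjugation by $(0,\theta)$ translates the basepoint copy of $\Theta_H$ to the coordinate $\theta$. Therefore
\[
\intrk^G(\Lambda)\leq \rk(\Theta)\leq \rk(\Theta_H)+\rk(\Theta_\Gamma)\leq \genrk(H)+\genrk(\Gamma),
\]
and taking the supremum over all finitely generated $\Lambda\leqslant G$ yields the desired bound on $\genrk(H\wr \Gamma)$. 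No serious obstacle arises; the argument is routine once $\bigcup_i\supp(v_i)$ is included in the generating data for $\Theta_\Gamma$.
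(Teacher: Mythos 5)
Your proof is correct and follows the same strategy as the paper: given a finitely generated subgroup of $H\wr\Gamma$, embed it into a sub-wreath-product $\Theta_H\wr\Theta_\Gamma$ with $\rk(\Theta_H)\leq\genrk(H)$ and $\rk(\Theta_\Gamma)\leq\genrk(\Gamma)$, and then use the standard $\rk(\Theta_H)+\rk(\Theta_\Gamma)$ generating set for the wreath product. In fact, the ``subtle pitfall'' you flag is a genuine improvement over the paper's argument: the paper's proof chooses $\Theta\leqslant\Gamma$ containing only $\langle\gamma_1,\ldots,\gamma_k\rangle$ and omits $\bigcup_i\supp(v_i)$, so the claimed inclusion $\langle g_1,\ldots,g_k\rangle\leqslant\langle f_1,\ldots,f_s,w_1,\ldots,w_t\rangle$ can fail (e.g.\ already for a single generator $(\delta_\lambda,e_\Gamma)$ with $\lambda\neq e_\Gamma$, the paper's $\Theta$ is trivial and the base coordinate $\lambda$ is never reached). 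Your set $F=\{\gamma_1,\ldots,\gamma_m\}\cup\bigcup_i\supp(v_i)$ is exactly what is needed to make $\Lambda\leqslant\Theta_H\wr\Theta_\Gamma$ hold, so your version is the one that should be written down.
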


\begin{proof}[Proof of Proposition~$\ref{prop:quot}$]
The former assertion immediately follows from Lemma~\ref{lem:quotient}; the latter assertion then holds by Lemma~\ref{lem:abel}.
\end{proof}

\begin{proof}[Proof of Proposition~$\ref{prop:ind}$]
This follows from the very definition of general rank. Indeed, every finite subset $S$ of $\Gamma$ may be regarded as a subset of $\Gamma_i$ for a sufficiently large $i$, depending on $S$.
\end{proof}

\begin{proof}[Proof of Proposition~$\ref{prop:ext_rk}$]
For the proof, we may assume that $\genrk(N)$ and $\genrk(\Gamma)$ are finite. Let $q\colon G\twoheadrightarrow \Gamma$ be the quotient map in the short exact sequence. Set $m=\genrk(N)$ and $l=\genrk(\Gamma)$. Take finitely many elements $g_1,\ldots,g_k\in G$ arbitrarily.
Since $\genrk(\Gamma)=l$, there exists a subgroup $\Theta$ of $\Gamma$ such that
\[
\langle q(g_1),q(g_2),\ldots ,q(g_k)\rangle \leqslant \Theta \leqslant \Gamma \quad \textrm{and}\quad \rk(\Theta)\leq l.
\]
Set $s=\rk(\Theta)$, and take a generating set $\{\theta_1,\ldots ,\theta_s\}$ of $\Theta$ of size $s$. For every $1\leq i\leq s$, fix  $h_i\in G$ satisfying $q(h_i)=\theta_i$. Set $H=\langle h_1,\ldots ,h_s\rangle$. Then, by construction, there exist $f_1,\ldots ,f_k\in H$ such that for every $1\leq j\leq k$, $q(g_jf_j^{-1})=e_{\Lambda}$ holds. This is equivalent to saying that $g_jf_j^{-1}\in N$. For every $1\leq j\leq k$, set $x_j=g_jf_j^{-1}\in N$. Then, since $\genrk(N)=m$, there exists a subgroup $K$ of $N$ with
\[
\langle x_1,x_2,\ldots ,x_k\rangle \leqslant K \leqslant N
\]
such that $\rk(K)\leq m$. For such $K$, we have
\[
\langle g_1,\ldots ,g_k\rangle \leqslant \langle K\cup H\rangle \leqslant G \quad \textrm{and}\quad \rk(\langle K\cup H\rangle )\leq \rk(K)+s\leq m+l.
\]
This implies \eqref{eq:loc_intrk_ineq}, as desired.
\end{proof}

\begin{proof}[Proof of Proposition~$\ref{prop:subgrp}$]
First we prove \eqref{eq:over}. We may assume that $\genrk(\Lambda)$ and $[\Gamma:\Lambda]$ are finite. Set $m=\genrk(\Lambda)$ and $l=[\Gamma:\Lambda]$. Take a set $\{s_1=e_{\Gamma},\ldots ,s_l\}$ of complete representatives of $\Gamma/\Lambda$. Take an arbitrary finitely generated subgroup $\Xi$ of $\Gamma$; let $\{\xi_1,\ldots ,\xi_k\}$ be a set of generators of size $k$, where $k=\rk(\Xi)$. Then, for every $1\leq i\leq k$, there exists a unique $1\leq j_i\leq l$ such that $s_{j_i}^{-1}\xi_i \in \Lambda$. Let $H$ be the subgroup of $\Lambda$ generated by $\{s_{j_i}^{-1}\xi_i\;|\; 1\leq i\leq k\}$. Since $\genrk(\Lambda)=m$, there exists a subgroup $\Theta$ of $\Lambda$ such that
\[
H\leqslant \Theta\leqslant \Lambda \quad \textrm{and} \quad \rk(\Theta)\leq m.
\]
We then observe that
\[
\Xi\leqslant \langle \Theta\cup \{s_2,\ldots ,s_l\}\rangle \leqslant \Gamma;
\]
hence obtaining $\genrk(\Gamma)\leq m+l-1$.

Next, we show \eqref{eq:sub}. Before proceeding to the proof, we recall the following variant of \emph{Schreier's subgroup lemma}: let $H$ be a non-trivial finitely generated group and $K$ be a subgroup of finite index in $H$. Then
\begin{eqnarray}\label{eq:index}
\rk(K)\leq [H:K]\cdot (\rk(H)-1)+1
\end{eqnarray}
holds. For instance, see \cite[Proposition~12.1 in Chapter~III]{LSbook}.

Again, we may assume that $\genrk(\Lambda)$ and $[\Gamma:\Lambda]$ are finite. Set $m=\genrk(\Gamma)$ and $l=[\Gamma:\Lambda]$. Since $\Gamma$ is non-trivial, we have $m\geq 1$. Take an arbitrary finitely generated non-trivial subgroup $\Xi$ of $\Lambda$. Then since $\genrk(\Gamma)=m$, there exists a non-trivial subgroup $\Theta$ of $\Gamma$ such that
\[
\Xi\leqslant \Theta\leqslant \Gamma \quad \textrm{and} \quad \rk(\Theta)\leq m.
\]
Then, we have
\[
\Xi\leqslant \Theta\cap \Lambda \leqslant \Lambda\quad \textrm{and}\quad
[\Theta:\Theta\cap \Lambda]\leq [\Gamma:\Lambda]\leq l.
\]
Hence, \eqref{eq:index} implies that
\[
\rk(\Theta\cap \Lambda)\leq [\Theta:\Theta\cap \Lambda] \cdot (\rk(\Theta)-1)+1\leq l(m-1)+1.
\]
Therefore, we obtain the conclusion.
\end{proof}

\begin{proof}[Proof of Proposition~$\ref{prop:wreath_rk}$]
Set $G=H\wr \Gamma$.
For the proof, we may assume that $\genrk(H)$ and $\genrk(\Gamma)$ are finite.
Set $m=\genrk(N)$ and $l=\genrk(\Gamma)$.
Take finitely many elements $g_1,\ldots,g_k\in G$ arbitrarily.
Write $g_i=(v_i,\gamma_i)$ for every $1\leq j\leq k$, where $v_i=\bigoplus_{\Gamma}H$ and $\gamma_i\in \Gamma$.
Since $\genrk(\Gamma)=l$, there exists a subgroup $\Theta$ of $\Gamma$ such that
\[
\langle \gamma_1,\gamma_2,\ldots ,\gamma_k\rangle \leqslant \Theta \leqslant \Gamma \quad \textrm{and}\quad \rk(\Theta)\leq l.
\]
Set $s=\rk(\Theta)$, and take a generating set $\{\theta_1,\ldots ,\theta_s\}$ of $\Theta$ of size $s$.
Set
\[
S=\{v_j(\gamma)\;|\; 1\leq j\leq k,\ \gamma \in \Gamma\};
\]
it is a finite subset of $H$.
Since $\genrk(H)=m$, there exists a subgroup $P$ of $H$ with
\[
\langle S\rangle \leqslant P\leqslant H \quad \textrm{and}\quad \rk(P)\leq m.
\]
Set $\rk(P)=t$ and take a generating set $\{p_1,\ldots ,p_t\}$ of $P$ of size $t$.
Set $f_i=(\mathbf{e},\theta_i) \in \hG$ and $w_n=(p_n\delta_{e_H},e_{\Gamma}) \in \hG$ for every $1\leq i\leq s$ and every $1\leq n\leq t$. Here, $\mathbf{e}$ denotes the map $H\to \Gamma$ sending every $h \in H$ to $e_{\Gamma}$; $p_n\delta_{e_H}$ means the map $H\to \Gamma$ that sends $e_H$ to $p_n$ and sends all the other elements to $e_{\Gamma}$. Then we have
\[
\langle g_1,g_2,\ldots ,g_k\rangle \leqslant \langle f_1,\ldots, f_s, w_1,\ldots ,w_t\rangle \leqslant G
\]
and
\[
\rk(\langle f_1,\ldots, f_s, w_1,\ldots ,w_t\rangle)\leq s+t\leq l+m.
\]
Therefore, we obtain \eqref{eq:loc_intrk_ineq_wreath}, as desired.
\end{proof}

\begin{example} \label{example not equal}
Here we exhibit an example for which the equality does not hold in $\eqref{eq:ind_ineq}$. For every $n \ge 3$,  take an injective homomorphism $f_n\colon F_2\hookrightarrow F_n$ and take an injectve homomorphism $g_n\colon F_n\hookrightarrow F_2$. Then, consider a sequence
\begin{eqnarray}\label{eq:ind_1}
F_2 \xrightarrow{f_3} F_3 \xrightarrow{g_3} F_2 \xrightarrow{f_4} F_4 \xrightarrow{g_4} F_2 \xrightarrow{f_5} F_5 \xrightarrow{g_5} \cdots,
\end{eqnarray}
and let $\Gamma$ be the inductive limit of this sequence. We first claim that
\[
\genrk(\Gamma)=2.
\]
To see this, first we have  $\genrk (\Gamma) \ge 2$ since $\Gamma$ is not locally cyclic. Also,  by applying Proposition~\ref{prop:ind} to the inductive system $\eqref{eq:ind_1}$, we have  $\genrk(\Gamma)\leq \genrk(F_2)=2$. Therefore, we verify the claim.

Now we regard $\Gamma$ as the inductive limit of another inductive system
\begin{eqnarray}\label{eq:ind_2}
\Gamma_1=F_3 \xrightarrow{f_4\circ g_3} \Gamma_2=F_4 \xrightarrow{f_5\circ g_4} \Gamma_3= F_5 \xrightarrow{f_6\circ g_5} \cdots.
\end{eqnarray}
Then, for the inductive system $\eqref{eq:ind_2}$, we have
\[
2=\genrk(\Gamma) < \liminf_{i\to \infty}\genrk(\Gamma_i)=\infty;
\]
in particular, the inequality $\eqref{eq:ind_ineq}$ is \emph{strict} in this setting.
\end{example}

\begin{example} \label{infinite braid}
Recall the definition of the braid group $B_n$ from Example~\ref{ex:braid}.
There exists a natural injective homomorphism from $B_n$ to $B_{n+1}$, and we define $B_\infty$ to be the inductive limit of $B_n$.

In what follows, we show that
\[
\genrk(B_\infty) = 2.
\]
For $n \ge 3$, $B_n$ is non-abelian, and generated by two elements $\sigma_1$ and $\sigma_{n-1} \cdots \sigma_1$ since
\[ (\sigma_{n-1} \cdots \sigma_1)^{-1} \sigma_i (\sigma_{n-1} \cdots \sigma_1) = \sigma_{i+1}\]
for every $1\leq i \leq n-2$. This means that $\genrk(B_n) = 2$. Hence Proposition~\ref{prop:ind} implies that $\genrk(B_\infty) = 2$.

We can also define the inductive limit of the natural inductive system $([B_n,B_n])_{n\geq 2}$; this limit equals $[B_{\infty}, B_{\infty}]$. Then, we have
\[
\genrk([B_{\infty},B_{\infty}])=2.
\]
Indeed, this follows from Proposition~\ref{prop:ind} and  the result \cite{Kordek} of Kordek stating that $\rk([B_n,B_n])=2$ for every $n\geq 7$.
\end{example}

\begin{example}\label{ex:pure}
Here we provide another example related to braid groups; but it has infinite general rank. For every $n\geq 2$, $B_n$ admits a natural surjective homomorphism $B_n\twoheadrightarrow \mathrm{Sym}(n)$, where $\mathrm{Sym}(n)$ denotes the symmetric group of degree $n$. The kernel $P_n$ of this homomorphism is called the \emph{pure braid group with $n$ strands}. We can consider the inductive limit $P_{\infty}$ of the natural inductive system $(P_n)_{n\geq 2}$. Then we have
\[
\genrk(P_{\infty})=\infty.
\]
To see this, we first recall the following well known fact for every $n\geq 2$:
\[
P_{n}^{\ab}\simeq \ZZ^{\binom{n}{2}}
\]
(see \cite[Corollary 1.20]{KTbook}). By construction, we then have
\[
P_{\infty}^{\ab}\simeq \varinjlim \,\ZZ^{\binom{n}{2}}\simeq \bigoplus_{\NN}\ZZ.
\]
Here $(\ZZ^{\binom{n}{2}})_{n\geq 2}$ forms an inductive system via natural inclusion maps. Hence, Proposition~\ref{prop:quot} implies that
\[
\genrk(P_{\infty})\geq \sperk(P_{\infty}^{\ab})=\infty.
\]
\end{example}

In the settings of Examples~\ref{infinite braid} and \ref{ex:pure}, we have a short exact sequence
\begin{eqnarray}\label{eq:braids}
1\longrightarrow P_{\infty}\longrightarrow B_{\infty}\longrightarrow \mathrm{Sym}_{\mathrm{fin}}(\NN)\longrightarrow 1.
\end{eqnarray}
Here, $\mathrm{Sym}_{\mathrm{fin}}(\NN)$ denotes the \emph{finitary symmetric group} on $\NN$: it is the inductive limit of the natural inductive system $(\mathrm{Sym}(n))_{n\geq 2}$. We note that the group $\mathrm{Sym}_{\mathrm{fin}}(\NN)$ is locally finite and that
\[
\genrk(\mathrm{Sym}_{\mathrm{fin}}(\NN))=2.
\]
To see the latter, use Proposition~\ref{prop:quot} (or Proposition~\ref{prop:ind}). From these points of view, $\mathrm{Sym}_{\mathrm{fin}}(\NN)$ might be seen as a `small' group. However, in \eqref{eq:braids} we have
\[
2=\genrk(B_{\infty})<\genrk(P_{\infty})+\genrk(\mathrm{Sym}_{\mathrm{fin}}(\NN))=\infty;
\]
in particular, the inequality~\eqref{eq:loc_intrk_ineq} is \emph{far from being sharp} in this setting.

The following example comes from $1$-dimensional dynamics. Let $F$ be the group of Richard Thompson: it is the group of homeomorphisms on an interval $[0,1]$ that satisfy the following three conditions:
\begin{itemize}
  \item they are piecewise linear;
  \item in the pieces where maps are linear (affine), the slope is in the set $\{2^n\;|\; n\in \ZZ\}$;
  \item the breakpoints are finitely many and they belong to $([0,1]\cap \ZZ[1/2])^2$.
\end{itemize}
It is known that the commutator subgroup $[F,F]$ equals the group consisting of all elements in $F$ that are  identity  in neighborhoods of $0$ and $1$, and that
\[
F^{\ab}=F/[F,F]\simeq \ZZ^2.
\]
It is also known that $[F,F]$ is a simple group with $\rk([F,F])=\infty$, while $\rk(F)=2$. See \cite{CFP} for a comprehensive treatise on $F$.

\begin{prop}\label{prop:F}
For Thompson's group $F$, we have
\[
\genrk([F,F])=2.
\]
\end{prop}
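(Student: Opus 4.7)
The plan is to establish the equality via matching upper and lower bounds, using the piecewise-linear structure of $F$ and the self-similarity of $F$ inside itself.

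For the lower bound $\genrk([F,F]) \ge 2$, I would invoke Remark~\ref{rem:cyc}: a group has general rank at most $1$ if and only if it is locally cyclic. Since $[F,F]$ is a non-trivial simple group that is not abelian (for instance, it contains non-commuting elements coming from bumps supported in disjoint dyadic subintervals of $(0,1)$), it cannot be locally cyclic, so $\genrk([F,F]) \ge 2$.

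For the upper bound $\genrk([F,F]) \le 2$, the key ingredient is the following self-similarity property of $F$: for any dyadic rationals $a,b$ with $0 < a < b < 1$, the subgroup
\[
F_{[a,b]} := \{ f \in F \mid f|_{[0,1]\setminus[a,b]} = \mathrm{id}\}
\]
is isomorphic to $F$ itself; indeed, after conjugating by a suitable PL homeomorphism $[0,1] \to [a,b]$ respecting the dyadic structure and the slope condition, the two groups are identified. In particular $\rk(F_{[a,b]}) = \rk(F) = 2$. Clearly $F_{[a,b]} \subseteq [F,F]$, since every element of $F_{[a,b]}$ is identity near $0$ and near $1$.

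Now take an arbitrary finitely generated subgroup $\Xi = \langle g_1,\ldots,g_k\rangle$ of $[F,F]$. Each $g_i$ is the identity in some open neighborhoods of $0$ and of $1$, so by choosing dyadic rationals $a, b\in \ZZ[1/2]\cap (0,1)$ with $a$ sufficiently small and $b$ sufficiently close to $1$, we can ensure that $\supp(g_i) \subseteq [a,b]$ for every $i$. Hence
\[
\Xi \leqslant F_{[a,b]} \leqslant [F,F], \qquad \rk(F_{[a,b]}) = 2,
\]
which yields $\intrk^{[F,F]}(\Xi) \le 2$. Taking the supremum over finitely generated $\Xi$ gives $\genrk([F,F]) \le 2$, completing the proof.

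The only step that needs care is verifying that $F_{[a,b]} \cong F$; this is a well-known self-similarity of Thompson's group (see \cite{CFP}) and is essentially the statement that every dyadic subinterval of $[0,1]$ carries a canonical copy of $F$ obtained by rescaling the dyadic coordinate structure, so I would simply cite it rather than redo the PL-conjugation computation.
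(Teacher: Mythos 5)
Your proof is correct and uses the same strategy as the paper: non-local-cyclicity gives $\genrk([F,F]) \geq 2$, and the self-similarity of $F$ places each finitely generated subgroup of $[F,F]$ inside a rank-$2$ copy of $F$ supported on a dyadic subinterval, giving $\genrk([F,F]) \leq 2$. The only cosmetic difference is that you invoke $F_{[a,b]}\cong F$ directly for an arbitrary dyadic interval $[a,b]$ containing the supports, whereas the paper first conjugates the generators into the fixed interval $[1/4,3/4]$ via \cite[Lemma~4.2]{CFP} and then applies \cite[Lemma~4.4]{CFP} to that interval; both rest on the same self-similarity fact.
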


The proof uses standard ideas on $F$ described in \cite[Section~4]{CFP}.

\begin{proof}
Since $[F,F]$ is not locally cyclic, we have $\genrk([F,F])\geq 2$. In what follows, we will show that $\genrk([F,F])\leq 2$. Let $k\in \NN$ and let $f_1,\ldots ,f_k\in [F,F]$. Then, since $f_1,\ldots ,f_k$ are finitely many, there exist $a,b\in \ZZ[1/2]$ with $0<a<1/4<3/4<b<1$ such that for every $1\leq i\leq k$, $f_i$ is identity in $[0,a] \cup [b,1]$. By \cite[Lemma~4.2]{CFP}, there exists $g\in [F,F]$ such that for every $1\leq i\leq k$, $gf_ig^{-1}$ is identity in $[0,1/4]\cup [3/4,1]$. By \cite[Lemma~4.4]{CFP}, the group $H$ consisting of all elements in $[F,F]$ that are identity in $[0,1/4]\cup [3/4,1]$ is isomorphic to $F$; hence it is generated by two elements. Therefore, we have
\[
\langle f_1,\ldots ,f_k\rangle \leqslant g^{-1}Hg\leqslant [F,F]
\]
with $\rk(g^{-1}Hg)=2$. This yields
\[
\genrk([F,F])\leq 2,
\]
as desired.
\end{proof}

With the aid of Propositions~\ref{prop:ind}, \ref{prop:ext_rk} and \ref{prop:wreath_rk}, we can build up groups of finite general rank  out of groups that are known to have finite general rank. For instance, the group $\QQ\wr (B_{\infty}\wr [F,F])$ is a group of general rank at most $5$ by Example~\ref{infinite braid} and Proposition~\ref{prop:F}.

In relation to Theorem~\ref{thm:wreath_shin}, the following problem seems interesting.
\begin{problem}
Given a group $\Gamma$ $($non-abelian and not finitely generated$)$, determine $\genrk(\Gamma)$.
\end{problem}

\section*{Acknowledgment}
The authors are grateful to the anonymous referee for useful comments, which improve the present paper.
The second author is supported by JSPS KAKENHI Grant Number JP20H00114 and JST-Mirai Program Grant Number JPMJMI22G1.
The third author is supported by JSPS KAKENHI Grant Number JP21J11199.
The first author, the fourth author and the fifth author are partially supported by JSPS KAKENHI Grant Number JP21K13790, JP19K14536 and JP21K03241, respectively.


\end{document}